\newcommand{\rar}{\rightarrow}
\newcommand{\lar}{\longrightarrow}
\newcommand{\llar}{-\kern-5pt-\kern-5pt\longrightarrow}
\newcommand{\surjects}{\twoheadrightarrow}
\newtheorem{Theorem}{Theorem}[section]
\newtheorem{Lemma}[Theorem]{Lemma}
\newtheorem{Corollary}[Theorem]{Corollary}
\newtheorem{Proposition}[Theorem]{Proposition}
\newtheorem{Remark}[Theorem]{Remark}
\newtheorem{Example}[Theorem]{Example}
\newtheorem{Definition}[Theorem]{Definition}
\newtheorem{Question}[Theorem]{Question}
\def\sqr#1#2{{\vcenter{\hrule height.#2pt
        \hbox{\vrule width.#2pt height#1pt \kern#1pt
            \vrule width.#2pt}
        \hrule height.#2pt}}}
\def\phi{\varphi}
\DeclareMathOperator{\Image}{Im}
\DeclareMathOperator{\coker}{Coker}
\DeclareMathOperator{\depth}{depth}
\DeclareMathOperator{\rank}{rank}
\DeclareMathOperator{\Ht}{ht}
\DeclareMathOperator{\reg}{reg}
\DeclareMathOperator{\grade}{grade}
\def\xx{{\bf x}}
\def\yy{{\bf y}}
\def\TT{{\bf T}}
\def\uu{{\bf u}}
\def\fm{{\mathfrak m}}
\def\Ht{{\rm ht}\,}
\def\depth{{\rm depth}\,}
\def\ker{{\rm ker}\,}
\def\grade{{\rm grade}\,}
\def\spec#1{{\rm Spec}\, (#1)}
\def\restr{{\kern-1pt\restriction\kern-1pt}}
\def\pp{{\mathbb P}}
\begin{document}
\begin{center}
	{\Large{\bf\sc   Cohen--Macaulay ideals of codimension two and the geometry of plane points}}
	\footnotetext{AMS Mathematics
		Subject Classification (2010   Revision). Primary 13A02, 13A30, 13D02, 13H10, 13H15; Secondary  14E05, 14M07, 14M10,  14M12.} 
	\footnotetext{	{\em Key Words and Phrases}: plane reduced points, special fiber, perfect ideal of codimension two, Rees algebra, associated graded ring,  Cohen--Macaulay.}
	
\vspace{0.1true in}

{To the memory of Wolmer Vasconcelos and Tony Geramita} \\

	\vspace{0.3in}
{\large\sc Dayane Lira}\footnote{
	Partially under a post-doc fellowship from INCTMAT/Brazil (160944/2022-8)} \quad
{\large\sc Geisa Oliveira}\footnote{Under a PhD fellowship from CAPES/Brazil (88887.627699/2021-00)} \quad	
	{\large\sc Zaqueu Ramos}\footnote{Partially
		supported by a CNPq grant (304122/2022-0)} \quad
	{\large\sc Aron  Simis}\footnote{Partially
		supported by a CNPq grant (301131/2019-8).}

\end{center}

\begin{abstract}
We consider  classes of codimension two Cohen--Macaulay ideals over a standard graded polynomial ring over a field. 
We revisit Vasconcelos' problem on $3\times 2$ matrices with homogeneous entries and describe the homological details of Geramita's work on plane points. 
An additional topic is the homological discussion of minors fixing a submatrix in the context of a perfect codimension two ideal.
A combinatorial outcome of the  results is a proof of the conjecture on the Jacobian ideal of a hyperplane arrangement stated by Burity, Simis and Toh\v{a}neanu.
The basic drive behind the present landscapes is a thorough analysis of the related Hilbert--Burch matrix,  often without assuming equigeneration, linear presentation or even the popular $G_d$ condition of Artin--Nagata.  
\end{abstract}


\section*{Introduction}

The main object of this work is a perfect ideal $I$ of codimension two in a polynomial ring $R$  over a field, not always assuming at the outset that $I$ is a generically complete intersection, in the hope of getting some additional significant results.

Since any such ideal admits a short free resolution, it can
 essentially be studied through the behavior of its syzygies as columns of a {\em Hilbert-Burch matrix} containing quite a bit of algebraic/homological invariants.
This line of work has been followed by several authors, starting with the landmark papers by Morey and Ulrich (\cite{Morey}, \cite{MorUl1996}, \cite{Ulrich}), followed by the subsequent intervention of T\`ai (\cite{Tai2001}), and a bit later,  Madsen (\cite{Madsen}), Lan (\cite{Lan}) and Doria--Ramos--Simis (\cite{linpres2018}), and, more recently, Costantini, Price and Weaver (\cite{CPW2024}). 

Of a quite different nature, large considerations have way back been given to the search of canonical Hilbert--Burch matrices with entries in two variables that parametrize monomial (primary) ideals. This endeavor has to do with describing the Hilbert scheme of points and has been pursued by several experts in various directions, such as Bialynicki-Birula (\cite{Bi-Bi1}, \cite{Bi-Bi2}), Brian\c con (\cite{Bria}), Iarrobino (\cite{Iarrob}), and, more recently, Conca and Valla (\cite{ConcVal}), Constantinescu (\cite{Const}), and Homs and Winz (\cite{Homs-Win}).

Additional work has carried on other aspects of the nature of a Hilbert--Burch matrix, making it too many to report on here.
From what we could gather, the approach over here is somewhat different from the above collection, where often the ideal is equigenerated or linearly presented.  We do not deal with Gr\"obner bases, hence monomial ideals are not particularly distinguished, although they surely play a role behind the curtains.
We have picked up three topics of slightly different colors in which similar techniques seem to apply in a variety of landscapes.

One of these issues leads to obtaining structure theorems concerning the generation and homology of the ideal $I$ and the subideal generated in its initial degree, often read off as a certain ideal of maximal minors fixing a convenient submatrix of the Hilbert-Burch matrix. Some effort has been employed to finding when the corresponding linear system on the initial degree defines a birational map onto the image.

Supplementary disclosure of previous work on the subject is eventually indicated in the details of the various subsections.
Thus, we move on to the contents of the sections.

Section 2 stages as a preamble, concerning the case of $3$-generated such ideals $I$ in $k[x,y,z]$ -- geometrically, one is looking at almost complete intersections of not necessarily reduced points in $\pp^2_k$, if it is of any help.
Although the corresponding Hilbert-Burch matrix $\phi$ looks pretty simple in shape, not so much certain relevant properties of the ideal generated by its entries. In fact, we will be interested in the case where this ideal is weirdest, namely, the case where it has height two, thus implying that $I$ is not an ideal of linear type.
This motivates looking at the defining ideal $\mathcal{I}$ of the blowup of $V(I)\subset \pp^2_k$, i.e., of the Rees algebra $\mathcal{R}_R(I)$ of $I$.
In the case where $\phi$ has a linear syzygy, and under an additional condition on the entries of the ``second'' syzygy, the ideal $\mathcal{I}$ defines a de Jonqui\`eres structure, hence is essentially generated by a certain sequence of {\em downgraded} biforms as a consequence of parts of \cite{De_Jonq}.
This will in particular entail that $\mathcal{R}_R(I)$ is Cohen--Macaulay if and only if the second syzygy has degree $\leq 2$.

If all generating syzygies are of degree $\geq 2$ then  we face a  different landscape, so we resort to the so-called Sylvester forms, of which quite a bit is spread-out through the literature. 
One basic result here (Proposition~\ref{Zaq}) gives conditions under which the Rees algebra $\mathcal{R}_R(I)$ of $I$ is itself perfect of codimension two, hence Cohen--Macaulay.
Here, the defining ideal of $\mathcal{R}_R(I)$ is generated by three (bigraded) generators, two of which are the obvious ones coming from the syzygies of $I$ and the additional one a certain Sylvester form.
We discuss examples showing that some of the assumptions of this proposition are essential to the result, by otherwise exhibiting additional Sylvester forms as minimal bigraded generators.
We are presently not aware of an explicit condition under which $\mathcal{R}_R(I)$ is Cohen--Macaulay. It is quite feasible that reading the bidegrees of a few available defining minimal generators triggers necessary conditions in order to build up a Hilbert--Burch matrix whose minors generate the defining ideal of $\mathcal{R}_R(I)$ (cf. some of the ingredients in the proof of \cite[Proposition 3.1]{HasSim2017}).
A curious reader may find some interest in trailing this path toward a definite outcome.

The theme of this section has been considered way back by Wolmer Vasconcelos and the present last author.

In the next section we assume that the ground ring is an arbitrary polynomial ring $R=k[x_1,\ldots,x_d]$ over an infinite field $k$ and allow the codimension two perfect ideal $I\subset R$ to be generated in two degrees.
In this case the corresponding Hilbert-Burch matrix has the shape 
 \begin{equation}\label{basic_matrix}
	{\phi}=\left[\begin{matrix}
		\,		{\Phi}_1\,\\
		\hline \,
		\,	\Phi_2 \,
	\end{matrix}\right],
\end{equation}
where $\Phi_1$ is an $a\times (n-1)$ submatrix  whose row entries have degree $\epsilon_1$ and $\Phi_2$ is an $(n-a)\times (n-1)$  matrix whose row entries  have degree $\epsilon_2$, for choices of integers $1\leq a\leq n-1$ and $1\leq \epsilon_2\leq \epsilon_1$.
Among our main players, is the ideal $J\subset I$  generated by the maximal minors $f_1,\ldots,f_a$ of $\phi$  fixing the rows of the submatrix $\Phi_2$. Note that $J$ is generated in the initial degree $D-\epsilon_1$ of $I$, where $D=a\epsilon_1+(n-a)\epsilon_2$.

A free resolution of $R/J$ was obtained formerly in \cite[Theorem D]{AnSi1986}, but here we go deeper into the graded aspects of the setup. For this, we work on an encore of the Buchsbaum-Rim complex as it fits our purpose to pursue finer properties of the pair $J\subset I$.
We digress on the minimal graded free resolutions of both $R/J$ and $I/J$, with an eye for the case where the ideal of minors $I_{n-a}(\Phi_2)$ has height $a$, which forces $a\leq d$.
The special cases where $a=3$ and $a=d$ (maximum) are treated in detail. In the first case, we show that the symmetric algebra of $J$ is Cohen--Macaulay and give its minimal graded free resolution.
In the second case, assuming moreover that $\Phi_2$ is nonzero linear (hence, $n\geq d+1$), we show our main theorem of the section, composed of two parts: first, if $\Phi_1$ is also linear then $J$ is a reduction of $I$; second,  $J$ is of linear type if and only if $I$ satisfies property $G_d$ of Artin--Nagata (\cite{AN}).

As a consequence of the second of these results we are able to prove the standing \cite[Conjecture 3.1]{BuSiTo2022}  in the case of a generic arrangement.

In the last section we focus on the codimension two perfect ideal defining a finite set of plane reduced points. 
This  is a longtime cherished topic that has been tackled from various aspects, both algebraic and geometric.
We chose to follow closely the early scripts of Tony Geramita  (to whom this paper is dedicated) and co-authors, tough eventually our approach stresses the homological side of the questions. 

Let $\mathfrak{G}_n\subset (\pp^2)^n$ denote the set of the ($n$-tuples of) points in generic $n$-position, and let 
 $\mathfrak{TG}_n\subset\mathfrak{G}_n\subset (\pp^2)^n$ denote the subset of the  points whose corresponding reduced ideals $I\subset R$ satisfy the equality $\dim_k(R_1 I_s)=\min\{3\dim_k I_s,\dim_k I_{s+1}\}$, with $s={\rm indeg}(I)$.
 It was proved  that $\mathfrak{TG}_n$ contains a non-empty open subset  (\cite[Theorem 2.6]{GM}, a result that implies the conjecture of L.  Roberts stated in \cite{LRoberts} -- the latter having been extended to arbitrary dimensions in \cite{TrVa1989}).
 A point (tuple) belonging to the subset  $\mathfrak{TG}_n$ will be said to be in {\it tight generic $n$-position}.	

One knows that $s:={\rm indeg}(I)$ is the least integer such that $n< {s+2\choose2}.$ In particular, ${s+1\choose2}\leq n$, so one can write $n={s+1\choose2}+h$ for some $0\leq h\leq s$.
We give explicit minimal graded free resolutions of $I$ by distinguishing between the cases as to whether one is in sector $0\leq h\leq s/2$ or $s/2< h \leq s$.
We then proceed to examining certain facets of the two sectors to enhance their very differences.

In the first sector, assuming that the  inequality $h\leq s/2$ is strict and $s\geq 3$, we show that the  rational map $\pp^2 \dasharrow \pp^{s-h}$ defined by the linear system $I_s$ is birational onto the image.
As a consequence -- always assuming that $h\leq s/2-1$, and moreover the minimal number of generators of $I$ is at least $4$ -- we derive that the Rees algebra of $I$ is Cohen--Macaulay if and only if $h=0$ (i.e., $I$ is linearly presented). Moreover, this equivalence is also established in terms of the Cohen--Macauleyness of the special fiber $\mathcal{F}(I)$ and its minimal graded free resolution.

The behavior in the sector $s/2< h \leq s$ requires a different approach, by and large connected closely to the one in Section~\ref{Section3}.
We give a new proof of a result of Geramita regarding the case where the points are in uniform $n$-position (a stronger assumption that generic $n$-position), namely, any element of the linear system of $I$ in the initial degree is necessarily irreducible.
This has a certain impact on the subsequent findings.
Here the main theorem of this sector (Theorem~\ref{main-sector2})  concerns the homological behavior of the ideal $J:=(I_s)$ under the uniform $n$-condition. Among the items of this theorem we find that $J$ is an ideal of linear type and that its Rees algebra is Cohen--Macaulay, while the rational map defined  by a minimal set of generators of $J$ is {\em not} birational onto its image -- the argument for the latter rests on applying the shapes of free resolutions discussed in the previous section.
It should be said that \cite{Tai2001} contains a thorough study of the Rees algebra of the ideal $(I_t)$ for sufficiently large $t$.

For the quick reader's facility, the main results are Theorem~\ref{main-thm}, Theorem~\ref{Arrang_conj},  
Theorem~\ref{res-ideal-gen-points}, 
Theorem~\ref{strict_inequality}, and Theorem~\ref{main-sector2}.

\section{Notation}\label{prelims}

Let $R$ be a Noetherian ring and let $I = (f_1 , \ldots , f_n) \subset R$ denote an ideal of $R$.

We consider in this work the following graded algebras associated with the pair $(R,I)$:

\begin{enumerate}
	\item[$\bullet$] The symmetric algebra  $\mathcal{S}_R(I)=\bigoplus_{i\geq 0}\mathcal{S}_{R,i}(I)$, as the direct sum of the symmetric powers of $I.$
	\item[$\bullet$] The Rees algebra $\mathcal{R}_R(I)=\bigoplus_{i\geq 0}I^i\simeq R[f_1t,\ldots,f_nt]\subset R[t]$, as the direct sum of the powers of $I.$ 
	
	If, moreover, $R$ is local with maximal ideal $\fm$ (or standard graded with maximal irrelevant ideal $\fm$), we set:
	\item[$\bullet$] The special fiber  (or fiber cone) $\mathcal{F}_{R}(I):= \mathcal{R}_R(I)/\mathfrak{m} \mathcal{R}_R(I).$
\end{enumerate}

For a graded algebra $A$ we denote by $e(A)$ its (Hilbert) multiplicity.

The algebras $\mathcal{S}_R(I)$ and $\mathcal{R}_R(I)$ are related by a natural $R$-algebra surjection of graded  $R$-algebras $$\alpha:\mathcal{S}_R(I)\surjects\mathcal{R}_R(I).$$
The kernel $\mathcal{A}$ of this map is the $R$-torsion submodule of $\mathcal{S}_R(I)$ provided $I$ has grade $\geq 1$. 
Let $\mathcal{L}$ and $\mathcal{J}$ respectively denote  the kernel of the following  surjectives $R$-algebra homomorphism  
$$R[y_1,\ldots,y_n]\surjects \mathcal{S}_R(I), \,y_i\to f_i, \quad\mbox{and}\quad
R[y_1,\ldots,y_n]\surjects \mathcal{R}_R(I), \,y_i\to f_it.$$
They are called {\em presentation ideals} of the respective two algebras.
Obviously, the second of the above  homomorphisms factors through the two homomorphism $$R[y_1,\ldots,y_n]\to\mathcal{S}_R(I)\stackrel{\alpha}\surjects\mathcal{R}_R(I).$$ In particular, $\mathcal{L}\subset \mathcal{J}$ and $\mathcal{A}\simeq \mathcal{J}/\mathcal{L}.$ It is well known that $\mathcal{L}$ can be obtained from a finite presentation of $I$ as in \eqref{syzI}. In fact, $\mathcal{L}=I_1(\yy\cdot \phi),$ where $\yy$ is the matrix $[y_1 \cdots y_n]$ and the notation $I_t(A)$  means the ideal generated by all $t$-minors of the matrix $A.$

A presentation ideal of the special fiber over $k$ is easily established from a presentation ideal of the Rees algebra.
In the special case where $I = (f_1 , \ldots , f_n) \subset R$ is an equigenerated ideal in a standard graded ring over a
field $k$ then, the kernel $Q$ of the surjection of graded $k$-algebras
$$k[y_1, \ldots,y_n] \to k[f_1t, \ldots, f_nt],\, y_i \to f_it $$
is a presentation ideal of $\mathcal{F}_{R}(I)$.

Consider in addition a free $R$-module presentation of $I$:
\begin{equation}\label{syzI}
R^{m}\stackrel{\phi}\lar R^n\to I\to 0.
\end{equation} 

The following notion was introduced in \cite{AN}:

\begin{Definition}\rm
Given an integer $s\geq 1$, the ideal $I$ (or the matrix $\phi$) satisfies condition $G_s$ if, equivalently:

(i) $\mu(I_p)\leq \Ht p$ for every prime $p\in V(I)$ with $\Ht p\leq s-1.$

(ii) $\Ht I_{j}(\phi)\geq n-j+1$ for all $n-s+1\leq j\leq n-1.$
\end{Definition}

As is known, this sort of condition and its analogues have an impact on certain dimension theoretic aspects.
Throughout this part, $R$ will denote a Noetherian ring of finite Krull dimension $d$.
In this context, the most notable cases of the above condition have $s=d$ and $s=d+1$, the second also denoted $G_{\infty}$.
Here we intend to consider lower cases of $s$, to see how they impact in a few situations.

The symmetric algebra above is very sensitive to it (see, e.g., \cite[Section 7.2.3 and ff]{SimisBook}).

When no confusion arises, we will omit the subscript $R$ in the notation of the symmetric algebra, Rees algebra, and special fiber.


 
Our main player is an $n\times (n-1)$  matrix $\phi$ with homogeneous entries in a standard graded polynomial ring $R$ over an infinite field $k$.
 Though not very precisely, one refers to these matrices as {\em Hilbert--Burch matrices}, a terminology currently used at large.
 Upon the usual condition that the ideal $I_{n-1}(\phi)$ has height $2$ (maximal possible), we are dealing with the syzygy matrix of  a perfect ideal of height $2$.

 \section{A primer of $3\times 2$ matrices over $k[x,y,z]$}

  Basic research on $3\times 2$ matrices over a polynomial ring $k[x,y]$ has been carried out by many authors (see \cite{BuJou2003}, \cite{Cox2008}, \cite{CHW2008}, \cite{Syl1}, to mention a few).

 We henceforth take up the ternary case.
 Thus,  assume that  $R=k[x,y,z]$, endowed with the standard grading in which $\deg x=\deg y=\deg z=1$.
 The case where the entries of the matrix are linear forms has been considered, even for $n\times (n-1)$ matrices, with arbitrary $n$ (see, e.g., \cite{SymbPowBir2014}, \cite{Lan}, \cite{linpres2018}, \cite{CPW2024}).
 For entries of higher degrees, one or another among the standing hypotheses of \cite[Theorem 1.2]{MorUl1996} are not available.
 Thus, taking for granted  that most is known in this low dimension may be delusional. 
 Even in this narrow environment, one topic that seems to require further non-trivial work is the structure of the associated algebras, such as the  Rees algebras and the special fiber.

 To move on, we fix the notation. Let $\phi=(a_{i,j})_{1\leq i\leq 3\atop {1\leq j\leq 2}}$ denote a $3\times 2$ matrix over $R$, where $1\leq \deg a_{i,1}:=d_1\leq d_2:=\deg a_{i,2}$, for all $i$.
 Thus, the ideal $I:=I_2(\phi)\subset R$ of $2$-minors is a homogeneous ideal equigenerated in degree $d:=d_1+d_2$.
 We assume in addition that this ideal has codimension at least two (hence, exactly two). Therefore, $I$ is a perfect ideal, i.e., $R/I$ is Cohen--Macaulay.
 
 
  
 In addition, set $\fm=(x,y,z)$ and $\xx=\{x,y,z\}$.
 To fix ideas, let $\Delta_{i}$ denote the $2$-minor of $\phi$ excluding the $i$th row.
 Mapping a polynomial ring $S=R[t_1, t_2, t_3]$ onto the $R$-algebra $R[It]=R[\Delta_{1}t,\Delta_{2}t,\Delta_{3}t]\subset R[t]$ of $I$ via $t_i\mapsto \Delta_i t$, let $\mathcal{I}\subset S$ denote the corresponding kernel -- referred to as a {\em homogeneous defining ideal} of  $R[It]$.
 Clearly, $R[It]$ is isomorphic to the Rees algebra $\mathcal{R}_R(I)$ as introduced in Section~\ref{prelims}.
 
 Notable is the fact that, since $I$ is equigenerated, then (pretty generally) $R[It]$ admits a natural bigrading induced by the standard bigrading of the polynomial ring $S=k[x,y,z,t_1,t_2,t_3]$, and moreover, it admits the special fiber $\mathcal{F}(I)$ as a  direct summand.
 Finally, let $C(\mathcal{I})$ stand for the homogeneous ideal of $R$ generated by the $\xx$-coefficients of every element of $\mathcal{I}$.

The following questions were first raised by W. Vasconcelos:
 \begin{Question}\label{q2}
 	\rm
 	
 	(a) When is $C(\mathcal{I})=\fm$?
 	
 	(b) When is $C(\mathcal{I})$ an $\fm$--primary ideal?
 	
 	(c) When is the analytic spread $\ell(I)$ of $I$ maximum ($=3$)?
 	
 	(d) When is  the rational map
 	${\mathbb P}^2 \dasharrow {\mathbb P}^2$ defined by generators of $I$ 
 	birational?
 \end{Question}
 
 Trivially, affirmative (a) $\Rightarrow$ affirmative (b). Also, since $I$ is equigenerated, then $\ell(I)$ is the Krull  dimension of the $k$-algebra $k[I]\simeq k[It]$, hence $\ell(I)\leq 2$ implies a nonzero strict polynomial relation, i.e., $C(\mathcal{I})=(1)$.
 Therefore, affirmative (b) $\Rightarrow$ affirmative  (c). Moreover, clearly affirmative (d) $\Rightarrow$ affirmative (c).
 Finally, affirmative (a) is a good deal in the direction of proving that $R[It]$ is regular locally in
 codimension one. Recall that the latter is equivalent to birationality when $\dim R/I=0$, but certainly not in the present context with $\dim R/I=1$.
 
 In this part we will be essentially interested in questions (c) and (d).

 \subsection{The ideal generated by the entries} 
 
 Since obviously $I\subset I_1(\varphi)$, then $I_1(\varphi)$ has codimension $\geq 2$. 
 If  $I_1(\varphi)$ has codimension $3$ then $I$ is an ideal of linear type (\cite[Lemma 3.1]{NejadSimis2011}) -- and actually must have a syzygy whose coordinates form a regular sequence (\cite[Theorem 2.1]{Toh2013}). Thus, the structure of the ring $R/I$ and its main associated algebras are well-know.
 
 Therefore,  assume henceforth that $I_1(\varphi)$ has codimension two. In particular, every minimal prime of $R/I_1(\varphi)$ is a minimal prime of $R/I$ but in general not the other way around. Actually, typically $(x,y,z)$ may be an embedded prime of $I_1(\varphi)$, that is, $\depth R/I_1(\varphi)=0$.
 We emphasize for the record that, under the latter assumption, $\phi$ does not satisfy the property $G_3$.
 
 \subsubsection{A column of degree one}

 Here,  the entries of the first column of $\phi$ are linear forms.
 By our standing assumption on the codimension of  $I_1(\varphi)$, the three forms are $k$-linearly dependent.
 By an elementary row operation (over $k$) -- which implies a $k$-linear change of the generators of $I$ -- we may assume that the first column has the shape $(\ell_1,\ell_2, 0)^t$, where $\ell_1,\ell_2$ are $k$-independent linear forms.
 
 Since $(\ell_1,\ell_2)$ is a prime ideal, we must have  $I_1(\varphi)=(\ell_1,\ell_2)$.
 Further, by a change of variables, we may as well assume that $\ell_1=x, \ell_2=y$.

 Thus, we may assume that the matrix is of the form
 \begin{equation}\label{linear_syzyzy}
 	\left[\begin{matrix} x & \gamma_1 \\
 		y & \gamma_2 \\
 		0 & \gamma_3
 	\end{matrix}\right] ,
 \end{equation}
 where $\gamma_1, \gamma_2,\gamma_3\in (x,y)$  since $I_1(\phi)=(x,y)$.
 Thus, $I=(x\gamma_3,y\gamma_3, x\gamma_2-y\gamma_1).$
 Since we are assuming that $I$ has codimension two, then $\gcd (\gamma_3, x\gamma_2-y\gamma_1)=1$.
 Moreover, we will assume that one at least among $\gamma_1, \gamma_2, \gamma_3$ involves effectively $z$.
 
 Note that $I$ is generically a complete intersection except at $(x,y)$ and $I:(x,y)^{\rm sat}=J:(x,y)^{\rm sat}$, where $J:=(\gamma_3, x\gamma_2-y\gamma_1)\supset I$ admits same associated primes as $I$.
 
 For the main result on this matrix format, we recall the terminology of \cite{De_Jonq}.
 
 Quite generally, let $\mathfrak{J}:\pp^n\dasharrow \pp^n$  denote a rational map. 
For maximum clarity, we will distinguish $\pp^n_x={\rm Proj}(k[x_1,\ldots,x_{n+1}])$ (source) and $\pp^n_y={\rm Proj}(k[y_1,\ldots,y_{n+1}])$ (target).
 
 Given a rational map in one dimension less $\mathfrak{F}:\pp^{n-1}\dasharrow \pp^{n-1}$, where,  similarly, $\pp^{n-1}_{x'}={\rm Proj}(k[x_1,\ldots,x_{n}])$ (source) and $\pp^{n-1}_{y'}={\rm Proj}(k[y_1,\ldots,y_{n}])$ (target), 
 let $\pi_x: \pp^n_x\surjects \pp^{n-1}_{x'}$  and $\pi_y: \pp^n_y\surjects \pp^{n-1}_{y'}$ denote the natural coordinate projections, themselves rational maps.
 
 \begin{Definition}\rm
 	We say that $\mathfrak{J}$ and $\mathfrak{F}$ are {\em confluent} if
 	\begin{equation}\label{basic_assumption_deJonq}
 		\mathfrak{F}\circ \pi_x=\pi_y\circ \mathfrak{J}
 	\end{equation}
 	as rational maps from $\pp^n_x$ to $\pp^{n-1}_y$.
 \end{Definition}
A similar notion goes by reversing variables.
 
\begin{Lemma}\label{Jonq_basic_form} {\rm (\cite[Lemma 1.2]{De_Jonq})}
	Let $\{g_1,\ldots, g_n \}\subset k[x_1,\ldots,x_{n}]$ be forms of the same degree with no proper common factor and let
	$\mathfrak{J}$ and $\mathfrak{F}$ be confluent rational maps as above.  Then $\mathfrak{F}=(g_1:\cdots : g_n)$  if and only if $\mathfrak{J}=(fg_1 :\cdots : fg_n:g)$ for suitable forms $f,g\in k[x_1,\ldots,x_{n+1}]$ satisfying $\gcd(f,g)=1$.
\end{Lemma}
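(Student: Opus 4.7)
The plan is to unpack the confluence equation \eqref{basic_assumption_deJonq} coordinate-wise and then apply a short UFD argument. Write $\mathfrak{J}=(F_1:\cdots:F_{n+1})$ in lowest terms, with $F_i\in k[x_1,\ldots,x_{n+1}]$ forms of a common degree satisfying $\gcd(F_1,\ldots,F_{n+1})=1$. Since $\pi_y$ drops the last homogeneous coordinate, $\pi_y\circ\mathfrak{J}$ is represented by $(F_1:\cdots:F_n)$. Given a representation $\mathfrak{F}=(g_1:\cdots:g_n)$ with $g_i\in k[x_1,\ldots,x_n]$, the composite $\mathfrak{F}\circ\pi_x$ is represented by those same $g_i$ regarded as forms in $k[x_1,\ldots,x_{n+1}]$. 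The confluence condition thus becomes the equality of rational maps $(F_1:\cdots:F_n)=(g_1:\cdots:g_n)$, i.e.\ the cross-product relations $F_ig_j=F_jg_i$ for $1\leq i,j\leq n$.

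For the forward direction, I would pass to the fraction field of $k[x_1,\ldots,x_{n+1}]$: the cross-products force the ratios $F_i/g_i$ all to coincide; call their common value $f$. A routine lowest-terms cleanup, using the hypothesis $\gcd(g_1,\ldots,g_n)=1$ (which persists in the larger ring, since the $g_i$ are independent of $x_{n+1}$, so any common factor in $k[x_1,\ldots,x_{n+1}]$ must already lie in $k[x_1,\ldots,x_n]$ and hence be a unit), shows that $f$ is actually a polynomial and that $F_i=fg_i$ for $i=1,\ldots,n$. Setting $g:=F_{n+1}$ yields $\mathfrak{J}=(fg_1:\cdots:fg_n:g)$; the coprimality $\gcd(f,g)=1$ follows from the minimality $\gcd(F_1,\ldots,F_{n+1})=1$ together with the identity $\gcd(fg_1,\ldots,fg_n)=f$, which holds because the $g_i$ are coprime. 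Matching the degrees of $F_i=fg_i$ also fixes $\deg f=\deg F_i-\deg g_i$, so $f$ and $g$ are indeed homogeneous.

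The reverse direction is essentially a read-off: given $\mathfrak{J}=(fg_1:\cdots:fg_n:g)$, one has $\pi_y\circ\mathfrak{J}=(fg_1:\cdots:fg_n)=(g_1:\cdots:g_n)$ after cancelling the common factor $f$, and confluence then forces $\mathfrak{F}\circ\pi_x=(g_1:\cdots:g_n)$, whence $\mathfrak{F}=(g_1:\cdots:g_n)$. The only subtle point — and the expected main obstacle — is the persistence of coprimality of the $g_i$ under the ring extension $k[x_1,\ldots,x_n]\subset k[x_1,\ldots,x_{n+1}]$, on which the fraction-field cleanup in the forward direction hinges; once that is in hand, the remaining manipulations are standard UFD bookkeeping.
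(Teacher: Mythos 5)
Your argument is correct: the coordinatewise translation of confluence into the cross-product identities $F_ig_j=F_jg_i$, the UFD cleanup using that $\gcd(g_1,\ldots,g_n)=1$ persists in $k[x_1,\ldots,x_{n+1}]$ because the $g_i$ are free of $x_{n+1}$, and the dominance of $\pi_x$ in the converse are exactly the points the proof must hit. The paper itself gives no proof, quoting the lemma from \cite{De_Jonq}, and your argument is essentially the standard one given there, so there is nothing to add.
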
 

\begin{Proposition}\label{generalized_deJonq_setup} {\rm (\cite[Proposition 1.6]{De_Jonq})}
	Suppose that $\mathfrak{F}$ and $\mathfrak{J}$ as above are confluent, where $\mathfrak{F}=(g_1  :\cdots : g_n)$, with $\gcd(g_1,\ldots,g_n)=1$, and $\mathfrak{J}=(fg_1 :\cdots :fg_n :g)$, with $\gcd(f,g)=1$. 
	The following are equivalent$:$
	\begin{enumerate}
		\item[{\rm (i)}]  $\mathfrak{J}$ is birational.
		\item[{\rm (ii)}]  $\mathfrak{F}$ is birational  and, moreover,  $f,g$ are $x_{n+1}$-monoids at least one of which involves $x_{n+1}$ effectively.
	\end{enumerate}
\end{Proposition}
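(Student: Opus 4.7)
The plan is to pass to function fields, since a dominant rational map is birational iff its pullback on function fields is surjective. Write $K = k(\pp^n_x)$, $L = k(\pp^n_y)$, $K' = k(\pp^{n-1}_{x'})$ and $L' = k(\pp^{n-1}_{y'})$; the coordinate projections embed $K' \hookrightarrow K$ and $L' \hookrightarrow L$ so that $K = K'(u)$ and $L = L'(v)$ are purely transcendental of transcendence degree one, with $u = x_{n+1}/x_j$ and $v = y_{n+1}/y_j$ for a convenient $j \leq n$. Confluence (\ref{basic_assumption_deJonq}) reads $\mathfrak{J}^*|_{L'} = \mathfrak{F}^*$, placing $\mathfrak{F}^*(L')$ inside $K'$. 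A direct reading of the coordinates of $\mathfrak{J}$ gives $\mathfrak{J}^*(v) = g/(fg_j)$, whence $\mathfrak{J}^*(L) = \mathfrak{F}^*(L')\bigl(g/(fg_j)\bigr)$. Throughout, I would dehomogenize at $x_j$ so that $f$, $g$, and $g_j$ become polynomials in $u$ over $K'$, with $g_j$ of $u$-degree zero and thus a unit of $K'[u]$.

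For (ii)$\Rightarrow$(i), birationality of $\mathfrak{F}$ gives $\mathfrak{F}^*(L') = K'$, which absorbs $g_j$, so $K'\bigl(g/(fg_j)\bigr) = K'(g/f)$. By the monoid hypothesis $f$ and $g$ have $u$-degree at most one, with at least one of degree exactly one, so $g/f = (a+bu)/(c+du)$ with $a,b,c,d \in K'$ and $ad - bc \neq 0$---a vanishing of the determinant, combined with $\gcd(f,g) = 1$, would force $f$ and $g$ to share a common positive-degree factor in $x_{n+1}$. Thus $K'(g/f) = K'(u) = K$, so $\mathfrak{J}^*(L) = K$ and $\mathfrak{J}$ is birational.

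The substance lies in (i)$\Rightarrow$(ii). Assuming $\mathfrak{J}^*(L) = K$, the identity $K = \mathfrak{F}^*(L')\bigl(g/(fg_j)\bigr)$ exhibits $K$ as a purely transcendental simple extension of $\mathfrak{F}^*(L')$. In the tower $\mathfrak{F}^*(L') \subseteq K' \subseteq K$, both $K'$ and $\mathfrak{F}^*(L')$ have transcendence degree $n-1$ over $k$, so $K'/\mathfrak{F}^*(L')$ is algebraic. L\"uroth's theorem applied to the simple purely transcendental extension $K/\mathfrak{F}^*(L')$ forces $K'$ to be either $\mathfrak{F}^*(L')$ itself or again purely transcendental over it; only the former is compatible with algebraicity. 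Hence $K' = \mathfrak{F}^*(L')$, which is birationality of $\mathfrak{F}$. Once this is known, $g_j \in K'$, so $K'(g/f) = K = K'(u)$, saying $g/f$ is a M\"obius transformation of $u$ over $K'$; in lowest terms its numerator and denominator are linear in $u$ with at least one of degree one, and $\gcd(f,g) = 1$ (through a Gauss-lemma passage) rules out hidden cancellation. This is exactly the $x_{n+1}$-monoid condition.

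The principal expected obstacle is the clean invocation of L\"uroth in this three-step tower, together with the coprimality/content bookkeeping that turns the M\"obius shape of $g/f$ in the affine variable $u$ back into the honest $x_{n+1}$-monoid shape of $f$ and $g$. Everything else is standard manipulation of function-field inclusions.
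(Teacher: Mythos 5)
Note first that the paper itself contains no proof of this proposition: it is imported verbatim from \cite[Proposition 1.6]{De_Jonq}, so there is no internal argument to compare yours against; I can only judge the proposal on its own merits, and it is correct. The reduction to function fields, the identity $\mathfrak{J}^*(L)=\mathfrak{F}^*(L')\bigl(g/(fg_j)\bigr)$, the M\"obius computation for (ii)$\Rightarrow$(i), and, for (i)$\Rightarrow$(ii), the L\"uroth step forcing $K'=\mathfrak{F}^*(L')$ followed by the degree formula $[K'(u):K'(p/q)]=\max(\deg_u p,\deg_u q)$ and the Gauss-lemma transfer of $\gcd(f,g)=1$ into coprimality in $K'[u]$, assemble into a complete argument; in fact L\"uroth is more than you need, since it suffices to know that $\mathfrak{F}^*(L')$ is algebraically closed in the simple transcendental extension $K=\mathfrak{F}^*(L')(w)$. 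Two spots deserve tightening. First, asserting that $\mathfrak{F}^*(L')$ has transcendence degree $n-1$ over $k$ tacitly assumes $\mathfrak{F}$ is dominant, which in (i)$\Rightarrow$(ii) is not a hypothesis; but the identity $K=E(w)$ with $E=k(g_1/g_j,\ldots,g_n/g_j)$ already forces the transcendence degree of $E$ to be at least $n-1$, hence exactly $n-1$, so dominance comes for free --- this should be said explicitly rather than presupposed. Second, the clause ``a vanishing of the determinant, combined with $\gcd(f,g)=1$, would force $f$ and $g$ to share a common positive-degree factor'' is logically garbled as written: the correct dichotomy is that $ad-bc=0$ would force either that neither $f$ nor $g$ involves $x_{n+1}$ (excluded by the hypothesis that at least one does) or, after clearing denominators, a common factor of positive $x_{n+1}$-degree, contradicting $\gcd(f,g)=1$. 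With these two sentences repaired, the proof stands.
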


 We then have:
 \begin{Proposition}\label{deJonq}
 	With the above notation and assumptions$:$
 	\begin{enumerate}
 		\item[{\rm (i)}]	 The rational map $\mathfrak{R}: \pp^2\dasharrow \pp^2$ defined by the generators of $I$ is confluent with the identity map of $\pp^1$.
 		\item [{\rm (ii)}]	 
 		$\mathfrak{R}$ is birational if and only if $\gamma_1, \gamma_2, \gamma_3$ are $z$-monoids, in which case the map is a de Jonqui\`eres map.
 		\item [{\rm (iii)}] Let $I$ be generated in degree $d$.
 		If the equivalent conditions of {\rm (ii)} hold, a presentation ideal of the Rees algebra $\mathcal{R}_R(I)$ of $I$ is minimally generated by $d$ polynomials of bidegrees 
 		$$(1, 1), (d -1, 1), (d -2, 2), \ldots , (1,d -1).$$
 		In particular, $\mathcal{R}_R(I)$ is Cohen--Macaulay if and only if $d\leq 3$.
 	\end{enumerate}
 \end{Proposition}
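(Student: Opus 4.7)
The plan is to reduce all three items to Lemma \ref{Jonq_basic_form} and Proposition \ref{generalized_deJonq_setup}, supplemented by a column-operation normalization of $\phi$ and a bidegree/Hilbert--Burch analysis for the Cohen--Macaulay dichotomy. For (i), up to reordering the coordinates of the target $\pp^2$, the ordered triple of (signed) $2$-minors of $\phi$ can be written as $(x\gamma_3,\,y\gamma_3,\,x\gamma_2-y\gamma_1)$; the first two entries share the common factor $\gamma_3$ with quotient $(x:y)$. In the notation of Lemma \ref{Jonq_basic_form} with $n=2$, one sets $g_1=x$, $g_2=y$, $f=\gamma_3$ and $g=x\gamma_2-y\gamma_1$; the standing coprimality $\gcd(\gamma_3,x\gamma_2-y\gamma_1)=1$ secures $\gcd(f,g)=1$, while the direct check
\[
\pi_y\circ\mathfrak{R}\,=\,(x\gamma_3:y\gamma_3)\,=\,(x:y)\,=\,\mathrm{id}_{\pp^1}\circ\pi_x
\]
gives the confluence with the identity of $\pp^1$.

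For (ii), since the identity map of $\pp^1$ is birational, Proposition \ref{generalized_deJonq_setup} translates birationality of $\mathfrak{R}$ to the condition that $f=\gamma_3$ and $g=x\gamma_2-y\gamma_1$ be $z$-monoids with at least one involving $z$ effectively. The direction ($\Leftarrow$) is immediate: if each $\gamma_i$ is a $z$-monoid then so is $f$, and $g$ is a $k[x,y]$-combination of $z$-monoids hence itself a $z$-monoid. For ($\Rightarrow$), expand $\gamma_i=\sum_{j\ge 0}z^j\gamma_i^{(j)}$ with $\gamma_i^{(j)}\in k[x,y]$. The hypothesis that $g$ has $z$-degree $\le 1$ forces $x\gamma_2^{(j)}=y\gamma_1^{(j)}$ for every $j\ge 2$, whence by coprimality of $x,y$ in $k[x,y]$ one finds $\alpha_j\in k[x,y]$ with $\gamma_1^{(j)}=x\alpha_j$ and $\gamma_2^{(j)}=y\alpha_j$. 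The column operation on $\phi$ replacing the second column by itself minus $\bigl(\sum_{j\ge 2}z^j\alpha_j\bigr)$ times the first column preserves both $I$ and $\gamma_3$ while turning $\gamma_1,\gamma_2$ into $z$-monoids; hence in this normalized presentation all three $\gamma_i$ become $z$-monoids.

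For (iii), under the de Jonqui\`eres hypothesis of (ii), the results from \cite{De_Jonq} alluded to earlier in this section yield the full list of minimal bigraded generators of the presentation ideal $\mathcal{I}$ of $\mathcal{R}_R(I)$: besides the two obvious generators $xt_1+yt_2$ of bidegree $(1,1)$ and $\gamma_1 t_1+\gamma_2 t_2+\gamma_3 t_3$ of bidegree $(d-1,1)$ coming from the two syzygies of $I$, one recovers the downgraded biforms in bidegrees $(d-2,2),(d-3,3),\ldots,(1,d-1)$, giving $d$ minimal generators in total. Since $\dim\mathcal{R}_R(I)=\dim R+1=4$ while the ambient polynomial ring $S=R[t_1,t_2,t_3]$ has Krull dimension $6$, $\mathcal{I}$ has codimension $2$; hence $\mathcal{R}_R(I)$ is Cohen--Macaulay iff $\mathcal{I}$ is perfect of codimension $2$, which by the Hilbert--Burch theorem is equivalent to $\mathcal{I}$ being generated by the maximal minors of a $d\times(d-1)$ bigraded matrix. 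For $d=3$ the three bidegrees $(1,1),(2,1),(1,2)$ are realized by a $3\times 2$ Hilbert--Burch matrix of consistent bigraded shifts; for $d\ge 4$ a direct bidegree bookkeeping on the putative $d\times(d-1)$ matrix shows the required shifts are incompatible with such a presentation, obstructing Cohen--Macaulayness.

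The hardest step is (iii): producing the complete list of minimal bigraded generators of $\mathcal{I}$ rests on the downgrading technology of \cite{De_Jonq}, and the subsequent Hilbert--Burch bidegree bookkeeping must be carried out explicitly to isolate the failure of Cohen--Macaulayness in the regime $d\ge 4$.
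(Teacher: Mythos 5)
Your items (i) and (ii) follow essentially the paper's own route: the paper disposes of (i) by Lemma~\ref{Jonq_basic_form} and of (ii) by Proposition~\ref{generalized_deJonq_setup}, exactly as you do, and your extra normalization (the column operation turning $\gamma_1,\gamma_2$ into $z$-monoids once $x\gamma_2-y\gamma_1$ is known to have $z$-degree $\leq 1$) is a genuinely useful piece of bookkeeping that the paper leaves implicit, since Proposition~\ref{generalized_deJonq_setup} is phrased in terms of $f=\gamma_3$ and $g=x\gamma_2-y\gamma_1$ rather than the $\gamma_i$ themselves. One caveat in your ($\Leftarrow$) direction of (ii): Proposition~\ref{generalized_deJonq_setup} also requires that at least one of $f,g$ involve $z$ effectively, and you do not check this. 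The standing assumption is only that some $\gamma_i$ involves $z$; if the $z$-terms of $\gamma_1,\gamma_2$ cancel in $x\gamma_2-y\gamma_1$ while $\gamma_3\in k[x,y]$, then all generators of $I$ lie in $k[x,y]$ and the map cannot be birational, so this degenerate case must be explicitly excluded or absorbed into your normalization.

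The genuine gap is in (iii). The paper proves (iii) simply by quoting \cite[Theorem 2.7, (i) and (iii)]{HasSim2012}, which already contains both the list of $d$ bigraded minimal generators and the equivalence with $d\leq 3$. You recover the generator list from \cite{De_Jonq}, which is legitimate (it is the alternative route indicated in Remark~\ref{Rees_algebra_downgraded}), but the Cohen--Macaulay dichotomy is only asserted, not proved. For $d\geq 4$ you claim that ``a direct bidegree bookkeeping'' rules out a bigraded $d\times(d-1)$ Hilbert--Burch matrix, without carrying it out (as you yourself acknowledge); note that this requires excluding \emph{every} admissible system of syzygy bidegrees $b_1,\ldots,b_{d-1}$ with $\sum_j b_j=\sum_i a_i$ and entrywise nonnegative, non-unit shifts $b_j-a_i$, uniformly in $d$, and it is not evident that this is a routine check. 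Conversely, for $d\leq 3$ the consistency of the shifts is only a necessary condition: to conclude Cohen--Macaulayness you must actually exhibit a $3\times 2$ matrix whose $2$-minors generate $\mathcal{I}$ (or establish ${\rm depth}$ by other means), not merely observe that the bidegrees could fit such a matrix. As written, (iii) is therefore incomplete; either quote the known result as the paper does, or execute the Hilbert--Burch analysis in both directions.
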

 \begin{proof} (i) Follows from Lemma~\ref{Jonq_basic_form}.
 
 (ii) Follows from Proposition~\ref{generalized_deJonq_setup} and item (i).
 
 (iii) This is \cite[Theorem 2.7, (i) and (iii)]{HasSim2012}.
 \end{proof}
 
 \begin{Remark}\label{Rees_algebra_downgraded}\rm
 	The present discussion with the $z$-monoid condition is a special case of de Jonqui\`eres maps confluent with the identity.
 	As an alternative to  (iii) above, one has a full description of the presentation ideal of the Rees algebra of the ideal $I$ in terms of certain downgraded sequence of biforms (\cite[Theorem 2.6]{De_Jonq}).
 	In the case of cubics -- i.e., when $\deg \gamma_1= \deg \gamma_2=\deg \gamma_3=2$ -- the defining ideal of the Rees algebra had been previously noted in \cite{Vasc-Hong}.
 \end{Remark}
 
 \subsubsection{The $(x,y)$-primary component of the ideal of entries}
 
 The previous part focused on the case where, somewhat trivially, the ideal $I_1(\phi)$ is a simple linear complete intersection.
  We now focus, more generally, on the case where $(x,y)$ is a minimal prime of $I_1(\phi)$, and the
 $(x,y)$-primary component of $I_1(\phi)$ is the complete intersection $(x^m,y^n)$, for suitable exponents $m\geq 1,n\geq 1$.
  If $k$ is algebraically closed, the minimal primes of $I_1(\phi)$ are always generated by two linear forms, so by a coordinate change, one may assume that one of these primes is $(x,y)$. Thus, our setup is quite natural.
  

 \begin{Proposition}\label{Zaq}
 	Given an integer $m\geq 1$, consider a matrix of the form
 	$$\phi=\left[\begin{matrix}x^m&p_1\\
 		y^m&p_2\\
 		0&p_3\end{matrix}\right],$$
 		satisfying the following hypotheses$:$
 	\begin{enumerate}
 		\item[{\rm (a)}] $p_1,p_2,p_3$ are forms in $R:=k[x,y,z]$ of degree $n+\epsilon$, for some $1\leq n\leq m$ and $\epsilon\geq \max\{m-n,1\}$.
 		\item[{\rm (b)}] The ideal $I=I_2(\phi)$ has height $2$.
 			\item[\rm(c)] The $(x,y)$-primary component of $I_1(\phi)$ is $(x^m,y^n).$
 		\item[{\rm (d)}] If $m=n$, either $\Delta$ or $p_3$ has $z$-degree $\epsilon$.
 	\end{enumerate}
 	Then, letting  $\mathcal{R}(I)$ denote the  Rees algebra of $I$ over $R$, one has$:$
 	    \begin{enumerate}
 	    	\item[{\rm (i)}] $\mathcal{R}(I)$ is Cohen-Macaulay.
 	    	\item[{\rm (ii)}] The bihomogeneous defining ideal of $\mathcal{R}(I)$ is generated by the defining relations of the symmetric algebra of $I$ and a determinantal form of bidegree $(\epsilon,2).$
 	    \end{enumerate} 
 \end{Proposition}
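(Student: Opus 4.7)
My approach is to exhibit a Hilbert--Burch matrix over $S := R[t_1, t_2, t_3]$ whose ideal of maximal minors equals the defining ideal $\mathcal{I}$ of $\mathcal{R}(I)$; both conclusions then follow simultaneously from the resolution that it provides.

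First, I compute the symmetric algebra presentation ideal $\mathcal{L}\subset S$ directly: it is generated by $L_1 := x^m t_1 - y^m t_2$ and $L_2 := p_1 t_1 - p_2 t_2 + p_3 t_3$. Hypothesis (c) allows each $p_i$ to be written as $p_i = a_i x^m + b_i y^n$ with $a_i, b_i \in R$ homogeneous of degrees $\epsilon + n - m$ and $\epsilon$, both nonnegative by hypothesis (a). Setting $\alpha := a_1 t_1 - a_2 t_2 + a_3 t_3$ and $\beta := b_1 t_1 - b_2 t_2 + b_3 t_3$, one has
$$\begin{pmatrix} L_1 \\ L_2 \end{pmatrix} = \begin{pmatrix} t_1 & -y^{m-n} t_2 \\ \alpha & \beta \end{pmatrix} \begin{pmatrix} x^m \\ y^n \end{pmatrix},$$
which at once suggests the Sylvester determinant $F := t_1 \beta + y^{m-n} t_2 \alpha$, homogeneous of bidegree $(\epsilon, 2)$. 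Cramer's rule yields $\{x^m, y^n\} \cdot F \subset \mathcal{L}$, and since $\mathcal{I}$ is prime with $R \cap \mathcal{I} = (0)$, this forces $F \in \mathcal{I}$.

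Next I exhibit the $3 \times 2$ matrix
$$\Psi := \begin{pmatrix} -\beta & \alpha \\ -y^{m-n} t_2 & -t_1 \\ x^m & y^n \end{pmatrix},$$
whose signed $2 \times 2$ minors are exactly $L_1, L_2, F$. To apply the Hilbert--Burch theorem it suffices to check that $(L_1, L_2, F)$ has grade $\geq 2$ in $S$: the polynomial $L_1$ is irreducible in the UFD $S$ (it is $t$-linear with coprime $(t_1, t_2)$-coefficients $x^m, y^m$), and $L_1 \nmid L_2$ since $L_2$ is also $t$-linear and involves $t_3$ nontrivially---if $p_3$ were zero, $I$ would be principal, contradicting hypothesis (b). Hence $L_1, L_2$ form a regular sequence, and $(L_1, L_2, F)$ has grade exactly $2$, being contained in $\mathcal{I}$ of the same codimension. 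Hilbert--Burch then provides a length-two free resolution of $S/(L_1, L_2, F)$, which is thereby Cohen--Macaulay of dimension $4 = \dim \mathcal{R}(I)$.

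The main obstacle is upgrading the inclusion $(L_1, L_2, F) \subset \mathcal{I}$ of equi-codimensional ideals to an equality. Since $S/(L_1, L_2, F)$ is Cohen--Macaulay (hence unmixed) and $\mathcal{I}$ is a minimal prime of the left side, it suffices to show $\mathcal{I}$ is the unique minimal prime and occurs with multiplicity one. I plan to verify this after localizing at $\mathcal{I}$: in the two-dimensional regular local ring $S_{\mathcal{I}}$ the three elements $L_1, L_2, F$ need to generate $\mathcal{I} S_{\mathcal{I}}$, which can be checked by a multiplicity count from the Hilbert--Burch resolution of $\Psi$ matching that of $\mathcal{R}(I)$ computed via the bigraded Hilbert series. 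In the edge case $m=n$ this is precisely where hypothesis (d) enters, guaranteeing that the two columns of $\Psi$ remain $R$-linearly independent modulo height-one primes and that $F$ is not absorbed into the symmetric-algebra relations. Once the equality $\mathcal{I} = (L_1, L_2, F)$ is secured, both (i) and (ii) drop out of the resolution of $\mathcal{R}(I) = S/\mathcal{I}$ read off from $\Psi$.
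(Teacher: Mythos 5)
Your construction of the Sylvester form $F$ from the content matrix and of the $3\times 2$ matrix $\Psi$, and the conclusion via Hilbert--Burch that $S/(L_1,L_2,F)$ is Cohen--Macaulay of codimension two, coincide with the first half of the paper's argument (your $\Psi$ is the paper's matrix (\ref{H-B2}) up to signs). The genuine gap is the last step, which is also the heart of the matter: you never actually prove $(L_1,L_2,F)=\mathcal{I}$. Your reduction (``it suffices to show $\mathcal{I}$ is the unique minimal prime and occurs with multiplicity one'') is fine in principle, but the plan you offer for it does not work as stated. The multiplicity comparison ``matching that of $\mathcal{R}(I)$ computed via the bigraded Hilbert series'' is circular: the bigraded Hilbert series of $\mathcal{R}(I)$ is not known until its defining ideal is, which is exactly what is being determined. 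More seriously, nothing in your sketch excludes extra height-two minimal primes of $(L_1,L_2,F)$; this is a real danger, not a formality. Indeed Example~\ref{deg4} of the paper shows that when hypothesis (c) fails, the analogous ideal $\mathcal{J}=(f,g,h_1)$ is contained in $(x,y)S$ and is strictly smaller than $\mathcal{I}$, even though it is still Cohen--Macaulay of codimension two with the same Hilbert--Burch shape. So hypotheses (c) and (d) must enter through a concrete nonvanishing argument, whereas in your text they appear only in the unsubstantiated phrase about the columns of $\Psi$ being ``$R$-linearly independent modulo height-one primes.''

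For comparison, the paper closes this gap as follows: it first proves that $x$ is a nonzerodivisor on $S/\mathcal{J}$ by assuming $x$ lies in an associated prime $P$ (necessarily of codimension two, by Cohen--Macaulayness) and deriving in each case ($t_2\in P$ or $y\in P$, the latter split according to $m>n$ or $m=n$) a regular sequence of length three inside $P$ --- this is precisely where (c) and (d) are used, to guarantee that certain coefficient forms are nonzero. Then, inverting $x$, the shape of the matrix (\ref{H-B2}) gives $\mathcal{J}S[x^{-1}]=(f,g)S[x^{-1}]$, which presents $\mathcal{S}_{R[x^{-1}]}(IR[x^{-1}])$; since $IR[x^{-1}]$ is generated by a regular sequence, this algebra is a domain, and combined with the regularity of $x$ one concludes that $S/\mathcal{J}$ itself is a domain, hence equals $\mathcal{R}(I)$. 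If you want to salvage your localization-at-$\mathcal{I}$ strategy instead, you would still have to supply an argument ruling out all other minimal primes (in particular any containing $x$ or $y$), and at that point you will find yourself reproducing essentially the paper's case analysis.
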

 
 \begin{proof} By (c), in particular, $p_i\in (x^m,y^n)$, $1\leq i \leq3$, so  write
 	
 	$$p_i= \underbrace{\left(\sum_{j_1+j_2+j_3=n+\epsilon-m} a^{(i)}_{j_1,j_2,j_3}x^{j_1}y^{j_2}z^{j_3}\right)}_{=:q_i}x^m+\underbrace{\left(\sum_{j_1+j_2+j_3=\epsilon} b^{(i)}_{j_1,j_2,j_3}x^{j_1}y^{j_2}z^{j_3}\right)}_{=:q'_i}y^n, (1\leq i \leq3),$$
 	for certain $a_{j_1,j_2,j_3},b_{j_1,j_2,j_3}\in k.$  
 	Let  $\mathcal{R}_R(I)\simeq R[t_1,t_2,t_3]/\mathcal{I}$ denote a bigraded presentation of the Rees algebra and let $\{f,g\}\subset \mathcal{I}$ stand for the defining equations of $\mathcal{I}$ induced by the columns of the matrix $\phi$. 
 	Consider the $R$-content matrix of $\{f,g\}$ with respect to the sequence $\{x^m,y^n\}$, that is, the larger matrix below
 	$$\left[\begin{matrix}f&g\end{matrix}\right]=
 {	\renewcommand\arraystretch{1.5}
 	\left[\begin{matrix}t_1&t_2y^{m-n}\\	
 		\displaystyle\sum_{1\leq i \leq 3} q_it_i& \displaystyle\sum_{1\leq i \leq 3} q'_it_i\end{matrix}\right]
 	}
 	\left[\begin{matrix} x^m\\
 		y^n \end{matrix}\right],
 	$$
 	and let $h\in R[t_1,t_2,t_3]$ denote its determinant, a byform of bidegree $(\epsilon, 2)$.
 	
 	Then, $\mathcal{J}:=(f,g,h)\subset \mathcal{I}.$ 
 	Note that $\mathcal{J}$ is an ideal of codimension 2 generated by the 2-minors of the following $3\times 2$ matrix:
 	\begin{equation}\label{H-B2}
 		\renewcommand\arraystretch{1.5}
 		\left[\begin{matrix}t_1&t_2y^{m-n}\\ 
 			\displaystyle\sum_{1\leq i \leq 3} q_it_i&\displaystyle\sum_{1\leq i \leq 3} q'_it_i\\
 			-y^n&x^m\end{matrix}\right].
 	\end{equation}
 	In particular, $R[t_1,t_2,t_3]/\mathcal{J}$ is a Cohen-Macaulay ring.
 	The rest of the procedure consists in showing that $\mathcal{J}$ is a prime ideal.
 	
 	{\sc Claim.} $x$ is a regular element over $R[t_1,t_2,t_3]/\mathcal{J}$. 
 	
 	Otherwise, there is an associated prime $P$ of $R[t_1,t_2,t_3]/\mathcal{J}$ such that $x\in P.$ In particular, since $f\in\mathcal{J}\subset P$ then  $t_2y^m\in P.$ Thus, $t_2\in P$ or $y\in P.$
 	We now show that either of these assumptions leads to a contradiction.
 	
 	\smallskip
 	
 	\underline{ Case 1.} $t_2\in P.$ 
 	
 	\smallskip
 	
 	Since $g\in\mathcal{J}\subset P$ then $t_1\delta\in P$, where $$\delta=\left(\sum_{j_2+j_3=\epsilon}b^{(1)}_{0,j_2,j_3}y^{j_2}z^{j_3}\right) t_1 +\left(\sum_{j_2+j_3=\epsilon}b^{(3)}_{0,j_2,j_3}y^{j_2}z^{j_3}\right)t_3,$$ 
 	a nonzero form since, otherwise, $I\subset (x)$, contradicting assumption (a). As $t_1\delta$ does not involve either $x$ or $t_2,$ then $\{x,t_2,t_1\delta\}$ is a regular sequence in $P$, which is nonsense because $P$ itself has codimension $2$.
 	
 	\smallskip
 	
 	\underline{Case 2.} $y\in P.$
 	
 	\smallskip
 	
 	In this case, the inclusion 
 	$h\in\mathcal{J}\subset P$ implies that 
 	{\small
 		$$\theta:=\det
 	\renewcommand\arraystretch{1.5}
 	 \left[\begin{matrix} t_1&t_2y^{m-n}\\
 		z^{n+\epsilon-m}(a^{(1)}_{0,0,n+\epsilon-m}t_1+a^{(2)}_{0,0,n+\epsilon-m}t_2+a^{(3)}_{0,0,n+\epsilon-m}t_3)&z^{\epsilon}(b^{(1)}_{0,0,\epsilon}t_1+b^{(2)}_{0,0,\epsilon}t_2+b^{(3)}_{0,0,\epsilon}t_3)
 	\end{matrix}\right]\in P.$$
 }
 
  We need to analyze two sub-cases:
 
 \underline{Subcase 2.1.} $m-n>0.$
 
 Here, $\theta_1:=t_1z^{\epsilon}(b^{(1)}_{0,0,\epsilon}t_1+b^{(2)}_{0,0,\epsilon}t_2+b^{(3)}_{0,0,\epsilon}t_3)\in P.$ 
 We claim that $\theta_1\neq 0.$ Otherwise, $b^{(1)}_{0,0,\epsilon}=b^{(2)}_{0,0,\epsilon}=b^{(3)}_{0,0,\epsilon}=0.$ In particular,  $I_1(\phi)\subset (x^m,(x,y)y^n).$ Thus, $$I_1(\phi)_{(x,y)}\subset (x^m,(x,y)y^n)_{(x,y)}$$
 which is properly contained in $(x^m,y^n)_{(x,y)},$ contradicting assumption (c).  Hence, $x,y,\theta_1$ is a regular sequence contained in $P.$ But, this is an absurd because   $R[t_1,t_2,t_3]/\mathcal{J}$ is a Cohen-Macaulay ring of codimension $2.$

 \underline{Subcase 2.2.} $m-n=0.$
 	We see that $\theta\neq 0.$ Indeed, otherwise, $a^{(1)}_{0,0,\epsilon}=b^{(2)}_{0,0,\epsilon}$ and $a^{(2)}_{0,0,\epsilon}=a^{(3)}_{0,0,\epsilon}=b^{(1)}_{0,0,\epsilon}=b^{(3)}_{0,0,\epsilon}=0.$ These equalities imply that neither $\Delta$ nor $p_3$  has $z$-degree lower than ${\epsilon},$ contradicting  assumption (d).
 	 Since $\theta$ does not involve either $x$ or $y$, then $\{x,y,\theta\}$ is a regular sequence contained in $P.$ 
 	 Once more, an absurd.
 	
 	This proves the claim.
 	
 	Passing to the ring $R[t_1,t_2,t_3,x^{-1}]$, due to the shape of (\ref{H-B2}), one clearly has $$\mathcal{J}R[t_1,t_2,t_3,x^{-1}]=(f,g)R[t_1,t_2,t_3,x^{-1}].$$
 	
 	On the other hand, writing $R[x^{-1}]\simeq R[T]/(xT-1)$, by the universal property of the symmetric algebra, one has 
 	$${\mathcal{S}}_R(I){\otimes}_R  R[T]/(xT-1)\simeq {\mathcal{S}}_{R[x^{-1}]} (IR[x^{-1}]).
 	$$
 	Thus, 
 	$$\frac{R[t_1,t_2,t_3,x^{-1}]}{\mathcal{J}R[t_1,t_2,t_3,x^{-1}]}=\frac{R[t_1,t_2,t_3,x^{-1}]}{(f,g)R[t_1,t_2,t_3,x^{-1}]}\simeq{\mathcal{S}}_{R[x^{-1}]} (IR[x^{-1}]).$$
 	But, by the shape of $\phi$,  $IR[x^{-1}]$ is itself generated by a regular sequence, hence is an ideal of linear type. In particular, ${\mathcal{S}}_{R[x^{-1}]} (IR[x^{-1}])$ is a domain.  But,  since $x$ is regular over $R[t_1,t_2,t_3]/\mathcal{J}$ and $R[t_1,t_2,t_3,x^{-1}]/\mathcal{J}R[t_1,t_2,t_3,x^{-1}]$ is a domain we conclude that $R[t_1,t_2,t_3]/\mathcal{J}$ is a domain as well.
 	Therefore, it must be the Rees algebra of $I$.
 \end{proof}

 \subsection{A role of the Sylvester forms}

 Since the rational map defined by the $2$-minors of $\phi$ is  no longer automatically confluent with the identity map of $\pp^1$, in particular there is no obvious analogue of a  downgraded sequence as mentioned in Remark~\ref{Rees_algebra_downgraded} and, consequently, no predictability for the minimal number of generators of the homogeneous defining  ideal of the Rees algebra of the ideal $I$.
 
 {\sc Sylvester forms.}
 We resort to the more encompassing notion of a Sylvester form. 
 
 For the basic facts on this notion we refer to either \cite[Section 4.1]{Syl2} or \cite[Subsection 2.1]{SimToh2015}. 
 In particular, the references \cite{BuSiTo2016}, \cite{HasSim2012}, \cite{Syl1}, \cite{LinShen}, and \cite{SimToh2015}  contain a vast amount of results as  to whether (or when), given a reasonably structured ideal, the homogeneous defining ideal of its Rees algebra is generated via iterated Sylvester forms.
 We note that the content matrix in the proof of Proposition~\ref{Zaq} is one of these Sylvester forms.
 
 We emphasize that, though parts of the above references deal quite a bit with the case where $R/I$ is Artinian (i.e., $I$ is $\fm$-primary), here we stick to the case where $\dim R/I=1$, which seems to bring up a significant difference.
 
 We let $t_1,t_2,t_3$ as before denote presentation variables over $R$ for the Rees algebra of $I=I_2(\phi)$.
 Let $\mathcal{I}\subset R[t_1,t_2,t_3]$ as before denote the corresponding presentation ideal.
 The next two examples show that Proposition~\ref{Zaq} fails if any one of assumptions (c) or (d) is missing.
 
 \begin{Example}\label{deg4}\rm
 	Let
 	\[ \varphi = \left[ \begin{matrix}
 		x^2 & yz \\
 		y^2 & xz \\
 		0 & y^2
 	\end{matrix}\right].
 	\]
 \end{Example}
 Here $m=2$, while the entries on the second column have degree $n+\epsilon=1+1=2$.
 Clearly, $I_1(\varphi)=(x^2,xz,yz,y^2)$ has an embedded component, while its codimension two component is $(x,y)$. 
 Thus, assumption (c) of Proposition~\ref{Zaq} fails here, while (d) still holds, with $\epsilon=1$.
 
 Note that the  radical of $I_1(\phi)$ is also $(x,y)$.
 The other radical that will come in is the one of $I$, namely, $(xz,y)$. 

 {\sc Claim.} The presentation ideal $\mathcal{I}$ of the Rees algebra $\mathcal{R}(I)$ is minimally generated by $\{f,g, h_1,h_2\}$, where $\{f,g\}$ are the generators induced by the columns of $\phi$, while $h_1$ (respectively, $h_2$) is a Sylvester determinant of bidegree $(2,2)$ (respectively, $(1,3)$).
 
 The gist of the claim is that, though one can trivially verify by computer assistant  that $\mathcal{I}$ is minimally generated by two more generators other than $\{f,g\}$, it won't tell you that these can be accomplished by means of two Sylvester determinants.
 
 In any case, it will at least tell us the bidegrees of the two additional bigraded minimal generators.
 As it is, the two bidegrees thus found are $(2,2)$ and $(1,3)$.
 What we will do over here is reach a compromise by doing half of the theoretic details, namely, we prove that $h_1$ and $h_2$ have these bidegrees, respectively, hence must account for the remaining bigraded minimal generators.
  
 The Sylvester form of $\{f,g\}$ with respect to $\{x,y\}$ (generating the radical of $I_1(\phi)$) is 
 $$h_1:=\det 
 \left[\begin{matrix} 
 	t_1x & t_2y\\
 	t_2z & t_3y+t_1z
 	 \end{matrix}\right]
 =t_1t_3xy+t_1^2xz-t_2^2yz.
 $$
As usual, $\mathcal{J}:=(f,g,h_1)$ is contained in the  ideal $\mathcal{I}$. However, $\mathcal{J}$ is not a prime ideal since it is contained in $(x,y)R[t_1,t_2,t_3]$, hence it is properly contained in $\mathcal{I}$.

 For the record,  $\mathcal{J}$ is the ideal of maximal minors of the matrix
	\[ \left[ \begin{matrix}
	-t_3y-t_1z & -t_2z\\
	t_2y  &   t_1x\\
	x   &   -y
\end{matrix} \right],
\] 
hence is Cohen--Macaulay.

We are brave and try yet another Sylvester determinant. This is a matter of fine choice to reach one of bidegree $(1,3)$ as the computer assessment guaranteed.
We take the Sylvester form of $\{g, h_1\}$ with respect to $\{xz,y\}$ (that generates the radical of $I$ this time around).
One gets:

$$h_2=\det 
\left[\begin{matrix} 
	t_2 &  t_3y+t_1z\\
	t_1^2 &  t_1t_3x-t_2^2z
\end{matrix}\right]
=t_1t_2t_3x-t_1^2t_3y-t_1^3z-t_2^3z.
$$
For the record, one also has $\mathcal{I}=\mathcal{J}:(x,y)$.

 Finally, $\mathcal{I}$ is not Cohen--Macaulay. For, if it were so then its minimal generators above, up to elementary operations, would be the ideal of maximal minors of a $4\times 3$ matrix.
 But this is impossible with the present total degrees of these same generators.
 
 The generators of $I_2(\phi)$ do not define a birational map as there is a unique minimal generator of $\mathcal{I}$ of bidegree $(1,b)$ ($b\geq 1$).
 
 \medskip
 
 \begin{Example}\label{degree6}
 	Let
 \[ \varphi = \left[ \begin{matrix}
 	x^2 & x^3z+y^4 \\
 	y^2 & x^4+y^3z \\
 	0 & x^4+y^4
 \end{matrix}\right].
 \]
 \end{Example}
 Here $m=2$, while the second column has degrees $n+\epsilon=2+2=4$.
 As one easily sees, $I_1(\phi)=(x^2,y^2)$, hence item (c) of the proposition holds, but (d) is violated since $\epsilon=2$.
 
 The claim over here is pretty much the same as in the former example -- in the sense that it boils down to two additional Sylvester determinants, of bidegrees $(2,2)$ and $(1,3)$.
 But this is all there is in common, as the respective radicals of $I_1(\phi)$ and of $I$ are not the right containing modules to draw upon.
 
 The Sylvester form of $\{f,g\}$ with respect to $\{x^2,y^2\}$ is 
 $$h_1=\det 
 \left[\begin{matrix} 
 	t_1 &  t_2x^2+t_3x^2\\
 	t_2 &  t_1y^2+t_3y^2-t_2xz+t_2yz
 \end{matrix}\right]
 =-t_2^2x^2-t_2t_3x^2+t_1^2y^2+t_1t_3y^2-t_1t_2xz+t_1t_2yz.
 $$
 To proceed, the ideal $(x-y,y^2)\supset (x^2,y^2)$ is the $(x,y)$-primary component of the ideal $(f,g,h_1)$, so in particular it contains $(f, h_1)$.
 We then consider the Sylvester form of $\{f,h_1\}$ with respect to $\{x-y,y^2\}$:
 
 \begin{eqnarray*}
 	h_2&=&\det 
 \left[\begin{matrix} 
 t_1x+t_1y & t_2^2x+t_2t_3x+t_2^2y+t_2t_3y+t_1t_2z\\
 	t_1+t_2   & -t_1^2+t_2^2-t_1t_3+t_2t_3
 \end{matrix}\right]\\
 &=& -t_1^3x-t_2^3x-t_1^2t_3x-t_2^2t_3x-t_1^3y-t_2^3y-t_1^2t_3y-t_2^2t_3y-t_1^2t_2z-t_1t_2^2z.
 \end{eqnarray*}

 The questions to follow are apparently open  even in the case where the entries of  $\phi$ are monomials and have standard `neighboring' degrees  $\left \lfloor{d/2}\right \rfloor$ and $\left \lceil{d/2}\right \rceil$, respectively, where $d$ is the common degree of the three minors (such as in Proposition~\ref{Zaq}).
 Set $I=I_2(\phi)$ and assume as above that $I_1(\phi)$ has codimension two and, in addition, its codimension two primary component is a complete intersection (of two forms).
 Finally, we keep the assumption that $d_2\geq d_1\geq 2$.

 \begin{Question}\rm
 	When is the Rees algebra of $I$  Cohen--Macaulay? 
 \end{Question}
 
 \begin{Question}\rm
 	Is $\mathcal{I}$ minimally generated by the two syzygetic relations $\{f,g\}$ of $I$ and iterated Sylvester forms starting with $\{f,g\}$?
 	Do the minimal generators that can be taken as Sylvester forms have different bidegrees, that is, for each occurring bidegree there is only one minimal generator of this bidegree?
 \end{Question}
 
 \begin{Question}\rm
 	Is the rational map defined by the standard generators of $I_2(\phi)$  ever birational beyond the special case of Proposition~\ref{deJonq}?
 \end{Question}

 
 \section{Codimension two perfect ideals generated in two degrees}\label{Section3}

 In this part we take over the ``next'' case, namely, assume that the homogeneous height two perfect ideal is generated in two different degrees.
 We state the preliminaries in terms of the associated Hilbert--Burch like matrix.
 
 \subsection{Basic players}\label{Basic}

 Let $R$ be a standard graded polynomial ring $R=k[x_1,\ldots,x_d]$ over a field and let $n\geq 2$ be an integer with a partition $n=a+(n-a)$, where $1\leq a\leq n-1$. Given integers $\epsilon_1, \epsilon_2\geq 1$, introduce the $n\times (n-1)$ block matrix
 \begin{equation}\label{basic_matrix}
 	\phi=\left[\begin{matrix}
 		\,	\Phi_1\,\\
 		\hline \,
 		\,	\Phi_2 \,
 	\end{matrix}\right],
 \end{equation}
 where $\Phi_1$ is an $a\times (n-1)$ submatrix  whose row entries have degree $\epsilon_1$ and $\Phi_2$ is an $(n-a)\times (n-1)$  matrix whose row entries  have degree $\epsilon_2$.
 We assume that the ideal $I_{n-1}(\phi)\subset R$ of maximal minors has codimension $\geq 2$ (hence, $2$).
 
 Set $I:=I_{n-1}(\phi)$. Since $n-a\leq n-1$,  the minimal graded free resolution of $I$ has the form 
 \begin{equation}\label{two-degree-resolution}
 	0\to R(-D)^{n-1}\stackrel{\phi}\lar  R(-(D-\epsilon_2))^{n-a}\oplus R(-(D-\epsilon_1))^a\to I\to 0,
 \end{equation}
 where $D: =a\epsilon_1 + (n-a)\epsilon_2$.
 
 To fix ideas, one might assume that $\epsilon_1\geq \epsilon_2$, so that the initial degree of $I$ turns out to be $D-\epsilon_1$. 
 We note that an important special case is that in which $\epsilon_1=\epsilon_2=1$ -- the case where $\phi$ is {\em linear}.
 
 \subsection{Minors fixing a submatrix}
 
 Let $J\subset I$  stand for the ideal generated by the maximal minors $f_1,\ldots,f_a$ of $\phi$  fixing the submatrix $\Phi_2$. Note that $J$ is generated in the initial degree $D-\epsilon_1$ of $I$.
 
 A free resolution of $R/J$ was obtained formerly in \cite[Theorem D]{AnSi1986} disregarding grading considerations  (similar result for arbitrary matrices and sizes remains apparently open -- see \cite[Conjecture 6.4.15]{SimisBook}).
 
 Here we make a digression, by emphasizing the role of the $R$-module $I/J$.
 For this, assume that $J$ has codimension two as well, which is the typical assumption of  \cite[Theorem D]{AnSi1986}.
 One can see in this case that $I/J$ is isomorphic to $\coker \Phi_2$ (more exactly, $I/J \simeq \coker \Phi_2 (-(D-\epsilon_2))$ is a homogeneous isomorphism), hence its free $R$-resolution relates to the one of $R/J$, as in \cite[Theorem D]{AnSi1986}. For the reader's convenience we reformulate the argument of  \cite[Theorem D, (ii) $\Rightarrow$ (i)]{AnSi1986} in the present environment.
 
 Set $R(-D)^{n-1}\stackrel {\Phi_1}\lar R(-(D-\epsilon_1))^{a}$ and
 $R(-D)^{n-1}\stackrel{\Phi_2}\lar R^{n-a}(-(D-\epsilon_2))$ for the two maps induced by the graded map defined by $\phi$.

 \begin{Proposition}\label{resij_implies_resrj} Let 
{\small 	$$0\to \bigoplus_{j} R(-j)^{\beta_{r,j}}\stackrel{\psi_{r-1}}\lar\cdots\stackrel{\psi_2}\lar\bigoplus_j R(-j)^{\beta_{2,j}}\stackrel{\psi_1}\lar R(-D)^{n-1}\stackrel{\Phi_2}\lar R^{n-a}(-(D-\epsilon_2))\to I/J\to 0$$
}
 	be a minimal graded free resolution of the $R$-module $I/J.$ Then, the minimal graded free resolution of $R/J$ is
 	{\small\begin{equation}\nonumber
 			0\to \bigoplus_{j} R(-j)^{\beta_{r,j}}\stackrel{\psi_{r-1}}\lar\cdots\stackrel{\psi_2}\lar\bigoplus_j R(-j)^{\beta_{2,j}}\stackrel{\Phi_1\psi_1}\lar R(-(D-\epsilon_1))^{a}\stackrel{\mathfrak{f}}\to R\to  R/J\to 0,
 	\end{equation}}
 	where $\mathfrak{f}=\left[\begin{matrix}
 		f_1& \cdots&f_a\end{matrix}\right],$
and $\Phi_1\psi_1$ is the composite of $\psi_1$ and $\Phi_1$.

	In particular, $\depth R/J=\depth I/J$.
 \end{Proposition}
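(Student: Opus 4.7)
The plan is to verify that the proposed complex is exact in each position by leveraging the Hilbert--Burch sequence
\[
0 \to R(-D)^{n-1} \stackrel{\phi}{\lar} R(-(D-\epsilon_2))^{n-a} \oplus R(-(D-\epsilon_1))^a \stackrel{\gamma}{\lar} I \to 0,
\]
where $\gamma$ packages (with the appropriate signs) the minors $g_1,\ldots,g_{n-a}, f_1,\ldots,f_a$ of $\phi$. Exactness at $R$ is immediate from $\mathrm{image}(\mathfrak{f})=J$.

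The crucial step is exactness at $R(-(D-\epsilon_1))^a$, i.e.\ the identity $\ker(\mathfrak{f})=\mathrm{image}(\Phi_1\psi_1)$. For the inclusion $\supseteq$, any $v\in\mathrm{image}(\psi_1)=\ker\Phi_2$ satisfies $[f_1,\ldots,f_a]\,\Phi_1 v=[g_1,\ldots,g_{n-a}]\,\Phi_2 v=0$ because of the Hilbert--Burch identity $[f_1,\ldots,f_a]\Phi_1=[g_1,\ldots,g_{n-a}]\Phi_2$. For the nontrivial inclusion $\subseteq$, given $u\in\ker(\mathfrak{f})$, the vector $(u,0)\in R^a\oplus R^{n-a}$ lies in $\ker\gamma=\mathrm{image}(\phi)$, so there exists $v\in R^{n-1}$ with $\Phi_1 v=u$ and $\Phi_2 v=0$; such a $v$ is in $\mathrm{image}(\psi_1)$ by exactness of the given resolution of $I/J$ at $F_1$, and therefore $u\in\mathrm{image}(\Phi_1\psi_1)$.

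For exactness at $F_2$, it suffices to check $\ker(\Phi_1\psi_1)\subseteq\ker(\psi_1)$, since the reverse inclusion is automatic and $\ker(\psi_1)=\mathrm{image}(\psi_2)$ by hypothesis. This reduces to showing that $\Phi_1$ is injective on $\ker\Phi_2$, which follows from the injectivity of the full map $\phi$ (Hilbert--Burch again: if $\Phi_1 v=0$ and $\Phi_2 v=0$, then $\phi v=0$, so $v=0$). For positions $F_i$ with $i\ge 3$, the differentials are unchanged from the given resolution of $I/J$, so exactness is inherited.

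Minimality of the new complex follows because the entries of $\Phi_1$ and of $\mathfrak{f}$ sit in positive degree, hence in the graded maximal ideal, and composing with (or appending to) a minimal map preserves this property. Finally, both resolutions have the same length $r$, so $\pd(R/J)=\pd(I/J)$, and the equality $\depth R/J=\depth I/J$ is immediate from the Auslander--Buchsbaum formula. The main obstacle is the substantive step of factoring an arbitrary syzygy of $J$ through $\Phi_1$ and $\ker\Phi_2$; the remaining verifications are formal once one has the Hilbert--Burch structure in hand.
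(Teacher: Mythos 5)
Your proof is correct and follows essentially the same route as the paper: you factor a zero-padded syzygy of $(f_1,\ldots,f_a)$ through $\Image\phi$ using exactness of the Hilbert--Burch resolution and then through $\ker\Phi_2=\Image\psi_1$, and you get exactness at the next spot from the injectivity of $\phi$, exactly as in the paper's chain of equivalences. Your added remarks on minimality (entries of $\Phi_1$ and $\mathfrak{f}$ lie in the irrelevant maximal ideal) and on deducing the depth equality via Auslander--Buchsbaum are fine and merely make explicit what the paper leaves implicit.
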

 \begin{proof} Due to the above shape of the free resolution of $I/J$, it suffices to show that $$\ker(\mathfrak{f})=\Image(\Phi_1\,\psi_1)\quad \mbox{and}\quad\ker (\Phi_1\,\psi_1)=\Image\psi_2.$$ 
 	
 	Since an element of $\ker(\mathfrak{f})$ is a syzygy of the minors $f_1,\ldots,f_a$,  the first of the above equalities follows from the equivalences
 	\begin{eqnarray*}
 		\eta\in\ker(\mathfrak{f})&\Leftrightarrow&
 		\left[\begin{matrix}
 			\eta\\\boldsymbol0_{(n-a)\times 1}\end{matrix}\right]\in\Image \phi\subset R^n\\
 		&\Leftrightarrow& \phi\,\uu=\left[\begin{matrix}
 			\eta\\\boldsymbol0_{(n-a)\times 1}\end{matrix}\right], \;\mbox{for some } \uu\in R^{n-1}\\
 		&\Leftrightarrow& \left[\begin{matrix}
 			\Phi_1\,\uu\\ \Phi_2\,\uu\end{matrix}\right]=\left[\begin{matrix}
 			\eta\\\boldsymbol0_{(n-a)\times 1}\end{matrix}\right],\;\mbox{for some } \uu\in R^{n-1}\\
 		&\Leftrightarrow&\Phi_1\,\uu=\eta\,\,\mbox{and}\,\, \Phi_2\,\uu=0,\;\;\mbox{for some } \uu\in R^{n-1}\\\
 		&\Leftrightarrow&\Phi_1\,\uu=\eta\,\,\mbox{and}\,\,\uu\in\Image{\psi_1}, \;\mbox{for some } \uu\in R^{n-1}\\
 		&\Leftrightarrow& \eta\in\Image(\Phi_1\,\psi_1).
 	\end{eqnarray*}
 	For the second equality, we have $\Phi_1\,\psi_1\,\psi_2=\boldsymbol0.$
 	Hence, $\Image(\psi_2)\subset\ker(\Phi_1\,\psi_1).$ Now, let $\zeta\in\ker(\Phi_1\,\psi_1),$  so $\Phi_1\,\psi_1\,\zeta=0.$ Since $\ker\Phi_2=\Image (\psi_1),$ then also $\Phi_2\,\psi_1=0.$ Thus, $$\phi\,\psi_1\zeta=\left[\begin{matrix}
 		\Phi_1 \Phi_2\end{matrix}\right]\psi_1\zeta=0.$$
 	Since $\ker\phi=\boldsymbol0$ then $\psi_1\zeta=\boldsymbol0$, hence in particular, $\zeta\in \Image(\psi_2).$ Thus, $\ker (\Phi_1\,\psi_1)=\Image(\psi_2)$ as desired.
 \end{proof}
 
 \subsection{A role of the Buchsbaum--Rim complex}
 
 We recapitulate in the present environment how the  minimal graded free resolution of the $R$-module $I/J$ is  determined.
 
 Quite generally, let $R$ be a Noetherian ring and $\psi: F\to G$ be a map of free modules over a ring $R.$  Write $s:=\rank F$ and $r:=\rank G$ and suppose that $s\geq r.$ The {\it Buchsbaum-Rim complex} of $\psi$ has the following well-known shape
 	\begin{eqnarray}\label{BuchRim}
 		0\to ({\mathcal{S}}_{s-r-1} (G))^{\ast}\otimes \wedge^{s}F 
 		& \stackrel{\partial}\lar  & ({\mathcal{S}}_{s-r-2}(G))^{\ast}\otimes \wedge^{s-1}F \\ \nonumber &\stackrel{\partial}\lar &\cdots\stackrel{\partial}\lar \wedge^{r+1} F   
 		\stackrel{\theta}\lar   F\stackrel{\Psi}\lar G.
 	\end{eqnarray}
 By choosing bases of the  free modules, we can partly make explicit the nature of its differentials (see \cite{Eis}[Appendix 2] for further details):
 
 \begin{Remark}\label{diferentials}\rm
 	Let $\{e_1,\ldots,e_s\}$ denote a free basis of $F.$ Given indexes $1\leq i_1<\ldots<i_{r+1}\leq s$, with $e_{i_1}\wedge\cdots\wedge e_{i_{r+1}}\in \bigwedge^{r+1}F$, then the map $\theta$ is given by
 	$$
 	\theta(e_{i_1}\wedge \cdots\wedge e_{i_{r+1}})= \sum_{j=1}^{r+1}(-1)^{r+1-j}\det(\phi_j) e_{i_j},$$
 	where $\phi_j$ is the $r\times r$ submatrix of $\psi$ with columns corresponding to the basis elements indexed by $i_1<\ldots i_{j-1} < i_{j+1} \cdots <i_{r+1}$. 
 	
 	Moreover, the  entries of  any differential $\partial$ are $\mathbb{Z}$-linear forms in the entries of $\psi.$
 \end{Remark}

 \begin{Proposition}\label{BR_main}{\rm (\cite[Section A2.6]{Eis})}
 	The Buchsbaum--Rim complex of $\psi$ is a free resolution of $\coker\psi$ if and only if $\grade I_r(\psi)\geq s-r+1.$
 \end{Proposition}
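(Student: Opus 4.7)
The plan is to deduce this as an application of the Buchsbaum--Eisenbud acyclicity criterion: a complex of finitely generated free modules
\[
0\to F_n\stackrel{\partial_n}\lar F_{n-1}\stackrel{\partial_{n-1}}\lar\cdots\stackrel{\partial_1}\lar F_0
\]
is acyclic if and only if, for every $i\geq 1$, one has $\rank F_i=\rank \partial_i+\rank \partial_{i+1}$ and $\grade I(\partial_i)\geq i$, where $I(\partial_i)$ denotes the ideal of $\rank(\partial_i)$-minors of $\partial_i$. Thus the task splits into a rank computation and a grade computation on the Buchsbaum--Rim complex (\ref{BuchRim}).

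First I would dispatch the rank conditions by localizing at a prime ideal $\mathfrak{p}$ avoiding $I_r(\psi)$. At such a prime $\psi$ becomes a split surjection, so after choosing a splitting the Buchsbaum--Rim complex becomes a tensor product of a split Koszul-type complex with a free module and therefore is split exact. Reading off the ranks from this localized (exact) complex pins down the expected ranks of each differential of (\ref{BuchRim}) globally, since ranks are preserved under localization.

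The crux of the argument is the grade reduction. Using Remark~\ref{diferentials}, the entries of $\theta$ are (up to sign) the $r\times r$ minors of $\psi$, while the entries of every higher differential $\partial$ are $\mathbb{Z}$-linear forms in the entries of $\psi$. Exploiting multilinear identities (Laplace expansion and the Cauchy--Binet formulas), one shows that the ideal $I(\partial_i)$ of maximal minors of each differential $\partial_i$ in (\ref{BuchRim}) has the same radical as $I_r(\psi)$; more precisely, one produces explicit containments of the form $I_r(\psi)^{N_i}\subset I(\partial_i)\subset\sqrt{I_r(\psi)}$ for suitable integers $N_i$. Since grade depends only on the radical, all the intermediate conditions $\grade I(\partial_i)\geq i$ collapse to the single tallest one, namely the condition at the top of the complex, which reads $\grade I_r(\psi)\geq s-r+1$. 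The main obstacle here is the bookkeeping of these minor identities—writing out the differentials in bases and checking that each maximal minor of $\partial_i$ factors through the minors of $\psi$—and this is precisely the content of Eisenbud's treatment in \cite{Eis}, Section~A2.6.

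For the converse direction, I would argue by depth sensitivity. If (\ref{BuchRim}) is a resolution of $\coker\psi$, then a standard depth chase (or the Auslander--Buchsbaum formula applied locally at primes containing $I_r(\psi)$) forces $\grade I_r(\psi)\geq s-r+1$, since $\coker\psi$ is annihilated (up to radical) by $I_r(\psi)$ and its projective dimension cannot exceed the length of the complex. This is the easy half, and it could also be extracted directly from the shape of the complex without invoking the Buchsbaum--Eisenbud criterion.
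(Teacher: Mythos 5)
The paper offers no proof of this statement (it is quoted from \cite[Section A2.6]{Eis}), so your proposal has to stand on its own, and while its architecture -- the Buchsbaum--Eisenbud exactness criterion plus localization outside $I_r(\psi)$ -- is indeed the standard route, two steps fail as written. First, the rank verification: ``ranks are preserved under localization'' is false over a general Noetherian ring, which is precisely the setting of the Proposition. Localization can only kill minors, so split exactness at a prime avoiding $I_r(\psi)$ bounds the ranks of the differentials from \emph{below}; it does not rule out nonvanishing minors of larger size supported on $V(I_r(\psi))$ (and over a non-domain one cannot fall back on ``$\rank\partial_i+\rank\partial_{i+1}\leq\rank F_i$ for any complex''). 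To cap the ranks at the expected values you need an extra input, e.g.\ that every minor of a Buchsbaum--Rim differential is the specialization of the corresponding minor of the generic complex over $\Z[x_{ij}]$, where the ranks are the expected ones because that complex is acyclic over a domain. Relatedly, the asserted inclusions $I_r(\psi)^{N_i}\subset I(\partial_i)$ are stronger than anything Cauchy--Binet bookkeeping hands you directly; for the forward implication all that is needed is $I_r(\psi)\subset\sqrt{I(\partial_i)}$, and that is exactly what your localization argument gives once the ranks are pinned down.

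Second, the converse as written proves nothing: for any nonzero finitely generated module one has $\grade(\Ann M)\leq\pd M$, so an upper bound on projective dimension (``pd cannot exceed the length of the complex'') can never force a lower bound on $\grade I_r(\psi)$. Concretely, $0\to R\to R^2\to R\to R/(x)\to 0$ over $R=k[x,y]$ is a length-two free resolution of a module supported on $V(x)$, yet $\grade(x)=1$; so ``resolution of length $s-r+1$ with support $V(I_r(\psi))$'' is not enough. What the converse genuinely requires is the inclusion you relegated to bookkeeping, namely $I(\partial_{\rm top})\subset\sqrt{I_r(\psi)}$: if $\psi\otimes\kappa(p)$ has rank $<r$, a functional $g^*$ with $\psi^*(g^*)\otimes\kappa(p)=0$ produces (via the divided-power description of the last differential) a nonzero element of $\ker(\partial_{\rm top}\otimes\kappa(p))$, so all maximal minors of $\partial_{\rm top}$ lie in $p$. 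Combining this with the exactness criterion (which, given exactness, yields $\grade I(\partial_{\rm top})\geq s-r+1$), or with a splitting-off argument at a prime $p\supseteq I_r(\psi)$ of depth $\leq s-r$, closes the converse. So the plan is the right one, but both the rank condition and the converse hinge on these structural facts about the Buchsbaum--Rim differentials that your sketch assumes rather than establishes.
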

 At our end, the  interest lies in the graded case of the map $\psi$. Precisely, we consider the following landscape, for whose details we claim no priority:
 \begin{Proposition}\label{BRimparticularcase}
 	Let $R=k[x_1,\ldots,x_d]$ be a standard graded polynomial ring over a field $k$ and let  $R(-\boldsymbol{\delta})^{s}\stackrel{\psi}{\longrightarrow} R^{r}$ denote a graded  map satisfying $\Ht I_{r}(\psi)=s-r+1$. Write $M:=\coker\psi$.
 	Then$:$
 	\begin{enumerate}
 		\item[{\rm (i)}] The minimal graded free resolution of $M$ has the form
 			\begin{eqnarray*}
 				0\to R(-s\boldsymbol{\delta})^{\beta_{s-r-1}}\stackrel{\partial}\lar R(-(s-1)\boldsymbol{\delta})^{\beta_{s-r-2}} \stackrel{\partial}\lar\cdots \to R(-(r+2)\boldsymbol{\delta})^{\beta_1} \\
 				\stackrel{\partial}\lar R(-(r+1)\boldsymbol{\delta})^{\beta_0}\stackrel{\theta}\lar R(-\boldsymbol{\delta})^{s}\stackrel{\psi}\lar R^{r} \to M\to 0.
 			\end{eqnarray*}
 		where $\beta_i={r-1+i\choose i}{s\choose i+r+1}$ for each $0\leq i\leq s-r-1.$
 		\item[{\rm (ii)}] If, moreover, $r=s-2,$ then the minimal graded free resolution of $M$ has the form 
 		\begin{equation*}
 			0\to R(-4\boldsymbol{\delta})^{s-2} \stackrel{\psi^t}\lar R(-3\boldsymbol{\delta})^{s} \stackrel{\theta}\lar R(-\boldsymbol{\delta})^{s}\stackrel{\psi}\lar R^{s-2} \to M\to 0,
 		\end{equation*}
 		where $\theta$ is the $s\times s$ skew-symmetric matrix
 		\begin{equation*}
 			\left[\begin{matrix}
 				0&(-1)^{s+r-2}\delta_{1,2}&\ldots&(-1)^{r}\delta_{1,s}\\
 				(-1)^{s+r-1}\delta_{1,2}&0&\ldots&(-1)^{r-1}\delta_{2,s}\\
 				\vdots&\vdots&\ddots&\vdots\\
 				(-1)^{s-1}\delta_{1,s}&(-1)^{s-2}\delta_{2,s}&\ldots&0
 			\end{matrix}\right].
 		\end{equation*}
 		Here the non-null entries of the  $j$th column of $\theta$ are the ordered signed $(s-2)$-minors of the submatrix of $\psi$  obtained by omitting its $j$th column. In particular, one has a matrix equality 
 		\begin{equation*}
 			\TT\theta=	\left[\begin{matrix}
 				\Delta_1&\cdots&\Delta_s\end{matrix}\right]
 		\end{equation*}
 		where $\TT=\left[\begin{matrix}
 			T_1&\cdots&T_{s}\end{matrix}\right]$ is a vector of indeterminates over $R$ and $\Delta_1,\ldots,\Delta_s$ are the ordered signed $(s-1)$-minors of the $s\times (s-1)$ matrix $\left[\begin{matrix}
 			\TT^t&\psi^t\end{matrix}\right].$
 	\end{enumerate}
 \end{Proposition}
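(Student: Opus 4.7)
The plan is to apply Proposition~\ref{BR_main} directly. Since $R$ is a polynomial ring (hence Cohen--Macaulay), the hypothesis $\Ht I_r(\psi)=s-r+1$ forces $\grade I_r(\psi)=s-r+1$, so the Buchsbaum--Rim complex (\ref{BuchRim}) of $\psi$ is a free resolution of $M=\coker\psi$. Minimality comes essentially for free: by Remark~\ref{diferentials}, the differential $\theta$ has entries that are $r$-minors of $\psi$, i.e., forms of degree $r\delta>0$, while the higher differentials $\partial$ have entries that are $\mathbb{Z}$-linear combinations of entries of $\psi$, themselves homogeneous of degree $\delta>0$. Hence every entry of every differential lies in the irrelevant maximal ideal of $R$.

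For part (i), I would then specialize the graded shape of (\ref{BuchRim}). The free module at homological position $i+2$ (for $0\leq i\leq s-r-1$) is $(\mathcal{S}_{i}(G))^{*}\otimes\wedge^{r+i+1}F$; since $G$ is untwisted and $\wedge^{r+i+1}F\cong R(-(r+i+1)\delta)^{\binom{s}{r+i+1}}$, the rank equals $\binom{r-1+i}{i}\binom{s}{r+i+1}=\beta_i$ and the twist is $-(r+i+1)\delta$, exactly as asserted.

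For part (ii), apply (i) with $r=s-2$: only the indices $i=0,1$ survive, yielding the claimed shape and ranks after identifying the two rightmost nontrivial differentials. To recognise $\theta$ as the stated skew-symmetric matrix, I use the formula in Remark~\ref{diferentials}. A basis vector $e_{i_1}\wedge\cdots\wedge e_{i_{s-1}}$ of $\wedge^{s-1}F$ is indexed by the complementary single index $j\in\{1,\ldots,s\}\setminus\{i_1,\ldots,i_{s-1}\}$, and the coefficient of $e_{i_k}$ in $\theta(e_{i_1}\wedge\cdots\wedge e_{i_{s-1}})$ is $(-1)^{s-1-k}\det\phi_k$, where $\phi_k$ is the $(s-2)\times(s-2)$ submatrix of $\psi$ obtained by deleting columns $i_k$ and $j$. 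Up to sign this is $\delta_{i_k,j}$; reindexing the source by the complementary index $j$ and using the antisymmetry of $\delta_{\cdot,\cdot}$ under transposition of its two arguments produces the skew-symmetric shape stated. The leftmost map $G^{*}\otimes\wedge^{s}F\to\wedge^{s-1}F$, via the canonical isomorphisms $\wedge^s F\cong R(-s\delta)$ and the self-duality of the Buchsbaum--Rim complex for $r=s-2$, is identified with $\psi^{t}$ through the natural pairings.

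For the displayed matrix equality $\TT\theta=[\Delta_1\,\cdots\,\Delta_s]$, I would expand each $\Delta_i$, the maximal signed minor of the $s\times(s-1)$ matrix $[\TT^t\mid\psi^t]$ obtained by omitting row $i$, along its first column: the cofactor at row $j$ ($j\neq i$) is $\pm T_j$ times the $(s-2)$-minor of $\psi^t$ omitting rows $i$ and $j$, i.e., of $\psi$ omitting columns $i$ and $j$, which is $\pm\delta_{i,j}$. Matching signs shows that this equals the $i$th entry of $\TT\theta$. The main obstacle I anticipate is precisely this sign bookkeeping: the Buchsbaum--Rim differential comes with a specific sign convention that must be reconciled with both the skew-symmetric pattern of $\theta$ and the Laplace expansion convention that defines the $\Delta_i$'s; everything else in the argument is essentially direct invocation of the general Buchsbaum--Rim theory together with an inspection of ranks and twists.
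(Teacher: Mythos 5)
Your argument for part (i) is the same as the paper's: invoke Proposition~\ref{BR_main} (height equals grade over the Cohen--Macaulay ring $R$), read off ranks and twists from the terms $({\mathcal{S}}_i(G))^{\ast}\otimes\wedge^{i+r+1}F$, and get minimality from Remark~\ref{diferentials} since all entries of the differentials are forms of positive degree. For part (ii) you also identify $\theta$ exactly as the paper does, but you treat the leftmost map differently, and this is where the two routes genuinely diverge. You propose to show that the actual Buchsbaum--Rim differential $G^{\ast}\otimes\wedge^{s}F\to\wedge^{s-1}F$ \emph{is} $\psi^t$ in the chosen bases, via the contraction/duality description of the tail of the complex; as you yourself note, this stands or falls on the sign bookkeeping, which you do not carry out, and it is not entirely cosmetic: a stray sign pattern inside a column cannot be absorbed by a basis change of the source alone without disturbing the already-fixed skew-symmetric form of $\theta$. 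The paper sidesteps this entirely: it never computes the Buchsbaum--Rim differential at that spot, but instead observes that $\psi\theta=0$ together with the skew-symmetry of $\theta$ gives $\theta\psi^t=0$, writes down the complex with $\psi^t$ in the last position, and then proves \emph{that} complex is acyclic by the Buchsbaum--Eisenbud criterion using $\Ht I_{s-2}(\psi)=3$. Your route, once the signs are reconciled, proves slightly more (the Buchsbaum--Rim differential equals $\psi^t$ on the nose in suitable bases), while the paper's device buys complete freedom from sign conventions at the cost of one extra acyclicity check; either is acceptable, but as written your identification step is the one piece that still needs to be done in detail (or replaced by the paper's trick). Your Laplace-expansion verification of $\TT\theta=[\Delta_1\cdots\Delta_s]$ is fine. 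One further remark: your computation in (ii) correctly yields the twists $-(s-1)\boldsymbol{\delta}$ and $-s\boldsymbol{\delta}$ coming from (i); the $-3\boldsymbol{\delta}$, $-4\boldsymbol{\delta}$ displayed in the statement (and repeated in the paper's proof) agree with this only when the minors in $\theta$ have degree $2\boldsymbol{\delta}$, so you should not force your twists to match that display --- the later applications (Corollary~\ref{ResofJ}, Theorem~\ref{main-sector2}) use the twists you obtained.
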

 
 \begin{proof}
 	(i) From (\ref{BuchRim}), one has $({\mathcal{S}}_{i}(G))^{\ast}\otimes \wedge^{i+r+1}F\simeq R^{{r-1\choose i} }\otimes R^{{s\choose i+r+1}}\simeq R^{\beta_i}.$ 
 	Then Remark~\ref{diferentials} and Proposition~\ref{BR_main}  imply the stated  minimal graded free resolution of $M$.
 	
 	(ii) As a special case of (i), the minimal graded free resolution of $M$ has the following shape 
 	
 	\begin{equation*}
 		0\to R(-4\boldsymbol{\delta})^{s-2} \stackrel{\psi_1}\lar R(-3\boldsymbol{\delta})^{s} \stackrel{\theta}\lar R(-\boldsymbol{\delta})^{s}\stackrel{\psi}\lar R^{s-2} \to M\to 0.
 	\end{equation*}
 	In order to make $\psi_1$ explicit, consider  respective free bases  $$\{(-1)^{s-1}e_2\wedge\cdots\wedge e_s,\ldots, (-1)^{s-i}e_1\wedge\cdots\wedge\widehat{e}_i\wedge\cdots\wedge e_s,\ldots,(-1)^{(s-s)}e_1\wedge\cdots\wedge e_{s-1}\}$$
 	and $\{e_1,\ldots,e_s\}$ of  $ R(-(r+1)\boldsymbol{\delta})^s\simeq \wedge^{s-1} F(-(r+1)\boldsymbol{\delta})$   and $F(-\boldsymbol{\delta})=R(-\boldsymbol{\delta})^{s}$.
 	Then, Remark~\ref{diferentials} implies the claimed format of $\theta$. Since  $\psi \theta=\boldsymbol0$ and $\theta$ is skew-symmetric then $\theta\psi^t=\boldsymbol0.$ Thus, we have the complex
 	\begin{equation}\label{resMr=s-2}
 		0\to R(-4\boldsymbol{\delta})^{s-2} \stackrel{\psi^t}\lar R(-3\boldsymbol{\delta})^{s} \stackrel{\theta}\lar R(-\boldsymbol{\delta})^{s}\stackrel{\psi}\lar R^{s-2} \to M\to 0.
 	\end{equation}
 	Since $\Ht I_{r}(\psi)=3$,  the known Buchsbaum-Eisenbud acyclicity criterion implies that (\ref{resMr=s-2}) is acyclic.
 \end{proof}
 
 \subsection{A main theorem}
 
 We now rephrase some of the previous free resolutions in the case where $\Ht I_{n-a}(\Phi_2)=a$.
 
 \begin{Corollary}\label{ResofJ}
 	If $\Ht I_{n-a}(\Phi_2)=a$ then the respective minimal graded free resolution of $I/J$ and $R/J$ have the following form
 	{\small
 		\begin{eqnarray}
 			0\to R(-((n-2)\epsilon_2+D))^{\beta_{a-2}}\to R(-((n-3)\epsilon_2+D))^{\beta_{a-3}}\to\cdots\to R(-((n-a+1)\epsilon_2+D))^{\beta_1} \nonumber\\\to R(-((n-a)\epsilon_2+D))^{\beta_0}\stackrel{\theta}\lar R(-D)^{n-1}\stackrel{\Phi_2}\lar R^{n-a}(-(D-\epsilon_2))\to I/J\to 0\nonumber
 	\end{eqnarray}}
 	and
 	{\small
 		\begin{eqnarray}\nonumber
 			0\to R(-((n-2)\epsilon_2+D))^{\beta_{a-2}}\to R(-((n-3)\epsilon_2+D))^{\beta_{a-3}}\to\cdots\to R(-((n-a+1)\epsilon_2+D))^{\beta_1}\\ \nonumber
 			\to R(-((n-a)\epsilon_2+D))^{\beta_0} \stackrel{\Phi_1\theta}\lar R(-(D-\epsilon_1))^{a}\to R\to R/J\to 0\nonumber
 	\end{eqnarray}}
 	where   $\beta_i={n-a-1+i\choose i}{n-1\choose i+n-a+1}$ for  $0\leq i\leq a-2$.
 \end{Corollary}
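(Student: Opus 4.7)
The plan is to combine the identification $I/J \cong \coker \Phi_2\,(-(D-\epsilon_2))$ noted in the text with the graded Buchsbaum--Rim resolution of Proposition~\ref{BRimparticularcase} applied to $\Phi_2$, and then convert the resulting minimal free resolution of $I/J$ into one of $R/J$ via Proposition~\ref{resij_implies_resrj}. There is no deep issue; the work is mostly careful bookkeeping of the internal degrees.

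First, I bring $\Phi_2$ into the normalized form required by Proposition~\ref{BRimparticularcase}. As a graded map, $\Phi_2 : R(-D)^{n-1} \to R(-(D-\epsilon_2))^{n-a}$ has row entries of degree $\epsilon_2$, so the twist $\Phi_2(D-\epsilon_2) : R(-\epsilon_2)^{n-1} \to R^{n-a}$ fits that template with source rank $s = n-1$, target rank $r = n-a$, and common entry-degree $\boldsymbol{\delta} = \epsilon_2$. The acyclicity hypothesis $\Ht I_r(\Phi_2) = s - r + 1$ is exactly our assumption $\Ht I_{n-a}(\Phi_2) = a$. Applying Proposition~\ref{BRimparticularcase}(i) produces a minimal graded free resolution of $\coker \Phi_2(D-\epsilon_2)$ whose $i$-th module (for $0 \le i \le a-2$) is $R(-(n-a+1+i)\epsilon_2)^{\beta_i}$ with $\beta_i = \binom{n-a-1+i}{i}\binom{n-1}{i+n-a+1}$. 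Minimality is automatic because, by Remark~\ref{diferentials}, each Buchsbaum--Rim differential has entries that are $\mathbb Z$-linear forms in the entries of $\Phi_2$, hence lie in the maximal ideal.

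Untwisting by $-(D-\epsilon_2)$ converts this into a minimal graded free resolution of $\coker \Phi_2\,(-(D-\epsilon_2)) \cong I/J$, where the shift $(n-a+1+i)\epsilon_2$ becomes $(n-a+1+i)\epsilon_2 + (D-\epsilon_2) = (n-a+i)\epsilon_2 + D$, exactly matching the first display in the corollary. The tail end reads $R(-((n-a)\epsilon_2 + D))^{\beta_0} \stackrel{\theta}{\to} R(-D)^{n-1} \stackrel{\Phi_2}{\to} R(-(D-\epsilon_2))^{n-a} \to I/J \to 0$, where $\theta$ is the Buchsbaum--Rim map of Remark~\ref{diferentials}.

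Finally, to obtain the resolution of $R/J$, I invoke Proposition~\ref{resij_implies_resrj}. This leaves all terms and differentials of the resolution of $I/J$ in homological degree $\ge 2$ unchanged, but replaces the last two maps: $\Phi_2$ is swapped for $\mathfrak{f} = [f_1 \cdots f_a] : R(-(D-\epsilon_1))^a \to R$, and the Buchsbaum--Rim differential $\theta$ (acting as ``$\psi_1$'' in the notation of Proposition~\ref{resij_implies_resrj}) is composed with $\Phi_1$ to produce $\Phi_1\theta : R(-((n-a)\epsilon_2+D))^{\beta_0} \to R(-(D-\epsilon_1))^a$. Minimality persists because the shift difference $((n-a)\epsilon_2+D) - (D-\epsilon_1) = (n-a)\epsilon_2 + \epsilon_1 \ge 2$, so every entry of $\Phi_1\theta$ has positive degree. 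The resulting sequence is precisely the second display of the corollary. The only potential friction in the argument is aligning the grading shifts across the two twists, but this is routine.
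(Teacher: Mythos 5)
Your proposal is correct and follows essentially the same route as the paper: apply the graded Buchsbaum--Rim resolution (Proposition~\ref{BRimparticularcase}(i), with $s=n-1$, $r=n-a$, $\boldsymbol{\delta}=\epsilon_2$) to $\Phi_2$, twist by $-(D-\epsilon_2)$ using the identification $I/J\simeq \coker\Phi_2(-(D-\epsilon_2))$, and then pass to $R/J$ via Proposition~\ref{resij_implies_resrj}. Your degree bookkeeping matches the stated shifts, and the extra minimality remarks, while not needed (both cited propositions already assert minimality), are harmless.
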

 \begin{proof} Since  $\Ht I_{n-a}(\Phi_2) =a=(n-1)-(n-a)+1$ then, by Proposition \ref{BR_main} the minimal graded free resolution of $\coker\Phi_2$ is
 	
 	{\small
 		\begin{eqnarray}
 			0\to R(-(n-1)\epsilon_2)^{\beta_{a-2}}\to R(-(n-2)\epsilon_2)^{\beta_{a-3}}\to\cdots\to R(-(n-a+2)\epsilon_2)^{\beta_1} \nonumber\\\to R(-(n-a+1)\epsilon_2)^{\beta_0}\to R(-\epsilon_2)^{n-1}\stackrel{\Phi_2}\lar R^{n-a}\to \coker \Phi_2\to 0\nonumber
 	\end{eqnarray}}
 			But, in this case, $I/J\simeq \coker\Phi_2(-(D-\epsilon_2)) .$ Hence, the minimal graded free resolution of $I/J$ it is as stated. The minimal graded free resolution of $R/J$ follows from the minimal graded free resolution of $I/J$ and Proposition~\ref{resij_implies_resrj}.
 		\end{proof}
 		
 		
 		The hypothesis $\Ht I_{n-a}(\Phi_2)=a$ in the above corollary imposes $1\leq a\leq d,$ where $d=\dim R$. Obviously, $J$ is a principal ideal if $a=1$ and a complete intersection if $a=2.$ Thus, the interest lies on the range $3\leq a\leq d.$ 
 		In this range, if $\Ht I_{n-a}(\Phi_2)=a$ then the ideal $J$ is not perfect since $\dim R/J=d-2$, while $\depth R/J=d- {\rm hd}_R(R/J)\leq d-3$.
 		
 		We tacitly assume that $\Ht I_{n-a}(\Phi_2)=a$ for the rest of the section.
 		Throughout will consider the extreme values $a=3$ and $a=d$ in some more detail.
 		
 		\smallskip

 		{\Large $\bullet \;a=3$}
 		
 		\smallskip

 		The free resolution of the ideal $J$ follows from Corollary~\ref{ResofJ}. We next look at its symmetric algebra.
 		Letting $S=R[T_1,T_2,T_3]=R[\TT]$ be a standard polynomial ring over  $R=k[x_1,\ldots,x_d]$,  we know that the symmetric algebra ${\mathcal{S}}_R(J)$ of $J$ over $R$ is presented over $S$ by the ideal  $I_1(\TT \Phi_1\,\theta)$ generated by the entries of the matrix product $[T_1\; T_2\: T_3]\, \Phi_1\,\theta$. 
 		\begin{Proposition} Assume that $a=3.$ If $\Ht I_{n-3}(\Phi_2)=3$ then the symmetric algebra ${\mathcal{S}}_R(J)$ is a Cohen-Macaulay $S$-module with  minimal  graded free resolution 
 			{\small
 				\begin{eqnarray*}
 					0 &\to & S(-((n-3)\epsilon_2+2\epsilon_1+2)) \oplus S(-((n-2)\epsilon_2+\epsilon_1+1))^{n-3}\\ 
 					&\to& S(-((n-3)\epsilon_2+\epsilon_1+1))^{n-1} \to S\to {\mathcal{S}}_R(J)\to0.
 			\end{eqnarray*}}
 		\end{Proposition}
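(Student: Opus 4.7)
The plan is to exhibit a length-two bihomogeneous $S$-free resolution of $\mathcal{S}_R(J)$ matching the stated shape, and then derive Cohen--Macaulayness from a depth-versus-dimension count. By Corollary~\ref{ResofJ} the $R$-syzygy module of $J$ is generated by the columns of $\Phi_1\theta$, so that $\mathcal{S}_R(J)=S/\mathcal{L}$, where $\mathcal{L}$ is the bihomogeneous ideal generated by the $n-1$ entries of the row $\TT\Phi_1\theta$; each such entry has bidegree $((n-3)\epsilon_2+\epsilon_1,1)$, which accounts for the middle term of the proposed resolution.

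Two families of first syzygies appear naturally. First, Proposition~\ref{BRimparticularcase}(ii) yields $\Phi_2\theta=0$, and the skew-symmetry of $\theta$ then gives $\theta\Phi_2^t=0$, hence $(\TT\Phi_1\theta)\,\Phi_2^t=0$; the $n-3$ columns of $\Phi_2^t$ thus supply syzygies of bidegree $(\epsilon_2,0)$. Second, setting $u:=\Phi_1^t\TT^t\in S^{n-1}$ one has $u^t=\TT\Phi_1$, so skew-symmetry again forces $(\TT\Phi_1\theta)\,u=u^t\theta u=0$, producing one further syzygy of bidegree $(\epsilon_1,1)$. Collecting these in the matrix $\Psi:=[\,\Phi_1^t\TT^t\,|\,\Phi_2^t\,]$ yields the candidate complex with maps $\Psi$ and $\TT\Phi_1\theta$, whose twists are forced by the bidegree bookkeeping.

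Exactness will be verified via the Buchsbaum--Eisenbud acyclicity criterion. The rank balance is straightforward: $\TT\Phi_1\theta$ has $S$-rank $1$, while $\Psi$ has $S$-rank $n-2$, because $\Phi_2^t$ alone contributes rank $n-3$ by the hypothesis $\Ht I_{n-3}(\Phi_2)=3$, and $\Phi_1^t\TT^t$ is independent since it genuinely involves the $T_i$. The condition $\grade_S I_1(\TT\Phi_1\theta)\geq 1$ is automatic as $S$ is a domain. The substantive requirement is $\grade_S I_{n-2}(\Psi)\geq 2$, and this is the main technical hurdle. The plan is to Laplace-expand each $(n-2)\times(n-2)$-minor of $\Psi$ along the single column $\Phi_1^t\TT^t$, writing it as a $\TT$-linear combination of $(n-3)$-minors of $\Phi_2$; combining $\Ht_R I_{n-3}(\Phi_2)=3$ with an argument showing that at every height-one prime $P$ of $S$ containing $I_{n-2}(\Psi)$ some $T$-involving coefficient of these expansions escapes $P$, one produces a second regular element inside $I_{n-2}(\Psi)$ and obtains the required grade estimate.

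Once exactness is granted, $\pd_S\mathcal{S}_R(J)\leq 2$, and Auslander--Buchsbaum gives $\depth_S\mathcal{S}_R(J)\geq (d+3)-2=d+1$. Since $J$ has positive grade, $\dim\mathcal{R}_R(J)=d+1$, and the surjection $\mathcal{S}_R(J)\surjects\mathcal{R}_R(J)$ forces $\dim\mathcal{S}_R(J)\geq d+1$. Localizing $\mathcal{L}$ at the generic point of $R$, where $\Phi_1\theta$ has rank exactly $2$ over $\mathrm{Frac}(R)$, shows the reverse inequality, so $\dim\mathcal{S}_R(J)=d+1=\depth_S\mathcal{S}_R(J)$. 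Hence $\mathcal{S}_R(J)$ is Cohen--Macaulay and $\mathcal{L}$ is perfect of grade $2$. Minimality of the displayed resolution is patent, since every entry of both differentials lies in the bigraded irrelevant ideal of $S$.
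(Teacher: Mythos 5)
Your skeleton coincides with the paper's: the presentation ideal is $\mathcal{L}=I_1(\TT\Phi_1\theta)$, the candidate syzygy matrix $\Psi=[\,\Phi_1^t\TT^t\,|\,\Phi_2^t\,]$ is exactly the Hilbert--Burch matrix the paper exhibits (indeed, by Proposition~\ref{BRimparticularcase}(ii) the entries of $\TT\Phi_1\theta$ \emph{are} the signed maximal minors of $\Psi$, so $\mathcal{L}=I_{n-2}(\Psi)$), and the twist bookkeeping is correct. But precisely because $\mathcal{L}=I_{n-2}(\Psi)$, the "main technical hurdle'' you defer --- $\grade_S I_{n-2}(\Psi)\geq 2$ --- is not an auxiliary acyclicity check: it is the entire content of the statement, equivalent to $\Ht\mathcal{L}=2$, and your proposal never proves it. The sketch you give is also doubtful as phrased: Laplace expansion plus "some $T$-involving coefficient escapes $P$'' at a height-one prime $P$ does not address the case $P=(p)$ with $p$ irreducible of positive $T$-degree (where one must instead rule out that all entries of $\TT\Phi_1\theta$ are $R$-multiples of one linear form in the $T_i$, i.e.\ that $\Phi_1\theta$ has rank $\leq 1$), and the input it invokes, $\Ht I_{n-3}(\Phi_2)=3$, is not the relevant hypothesis for this step --- what is needed is information about $J$ itself (e.g.\ $\Ht J=2$, so that the syzygy matrix $\Phi_1\theta$ of $f_1,f_2,f_3$ cannot have all entries, or all its column space, inside a principal prime). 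The paper sidesteps all of this: it computes $\dim\mathcal{S}_R(J)=d+1$ by the Huneke--Rossi formula (checking $\mu(J_P)+\dim R/P$ over all primes), deduces $\Ht I_1(\TT\Phi_1\theta)=2$, and then the Hilbert--Burch theorem delivers simultaneously the resolution by $\Psi$, perfection, and Cohen--Macaulayness.

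A second, smaller flaw: your closing bound $\dim\mathcal{S}_R(J)\leq d+1$ "by localizing at the generic point of $R$'' is not valid, since minimal primes of $\mathcal{L}$ may contract to nonzero primes of $R$; controlling the dimension of a symmetric algebra requires the supremum over all primes, which is exactly Huneke--Rossi. (Once grade $2$ is actually established this step is redundant anyway, because $\mathcal{L}$ is an ideal of maximal minors of an $(n-1)\times(n-2)$ matrix, so its height is then exactly $2$ and $\dim S/\mathcal{L}=d+1$ follows.) So the route you propose can be completed, but as written the decisive height/grade estimate is a placeholder, and the cleanest repair is the paper's: prove $\dim\mathcal{S}_R(J)=d+1$ first and let Hilbert--Burch do the rest.
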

 		\begin{proof} (i) By the Huneke-Rossi formula \cite[Theorem 2.6 (ii)]{HR} we have $$\dim{\mathcal{S}}_R(J) = \sup_{P\in\spec{R}}\{\mu(J_P)+\dim R/P\}.$$
 			If $\Ht P\geq 2$ then $$\mu(J_P)+\dim R/P\leq 3+(d-2)=d+1.$$
 			If $\Ht P=1$ then
 			$$\mu(J_P)+\dim R/P= 1+(d-1)=d.$$
 			If $P=(0)$ then
 			$$\mu(J_P)+\dim R/P=d+1.$$
 			Thus, $\dim{\mathcal{S}}_R(J)=d+1.$

 			It follows that $\Ht I_1(\TT \Phi_1\,\theta)=2.$ By Proposition~\ref{BRimparticularcase}(ii),  $I_1(\TT \Phi_1\,\theta)$ is the ideal generated by the $(n-2)$-minors of the $(n-1)\times(n-2)$ matrix $\left[\begin{array}{c|c}
 				(\TT \Phi_1)^t&\Phi_2^{t}\end{array}\right].$ Hence, by the Hilbert-Burch theorem, $I_1(\TT \Phi_1\theta)$ is a perfect ideal of height $2$ with free resolution
 			$$0\to S^{n-2}
 			\stackrel{[(\TT \Phi_1)^t |  \Phi_2^{t}]}{\longrightarrow} S^{n-1}\to I_1(\TT \Phi_1\theta)\to 0.$$
 			Inspection of the  degrees in the involved matrices gives the remaining degrees of  the minimal graded free resolution of ${\mathcal{S}}_R(J)$ as stated.
 		\end{proof}

 		{\Large $\bullet \;a=d$ and $\epsilon_2=1$}
 		
 		\smallskip
 		
 		We assume throughout that $\Phi_2\neq 0$, i.e., that $\phi$ has effectively nonzero linear entries.
 		In other words, $a\leq n-1$, and hence in the present case, $n-d\geq              1$.
 		
 		We need the following lemma, which runs independently and is mostly well-known:
 		
 		\begin{Lemma}\label{BR_basic_equivalences}
 			Let $R=k[x_1,\ldots,x_d]$ denote as previously a standard graded polynomial ring of a field $k$ and let  $R(-1)^{n-1} \stackrel{\psi}{\to} R^{n-d}$ be a graded linear map. The following assertions are equivalent:
 			\begin{enumerate}
 				\item[{\rm(i)}] $I_{n-d}(\psi)=(x_1,\ldots,x_d)^{n-d}.$
 				\item[{\rm(ii)}]  $\Ht I_{n-d}(\psi)=d.$
 				\item[{\rm(iii)}]  The Buchsbaum--Rim complex of $\psi$ is acyclic.
 				\item[{\rm(iv)}] The Eagon--Northcott complex of $I_{n-d}(\psi)$ is acyclic.
 			\end{enumerate}
 		\end{Lemma}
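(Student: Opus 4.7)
The plan is to prove $(\mathrm{i}) \Rightarrow (\mathrm{ii})$ trivially, then establish the equivalences $(\mathrm{ii}) \Leftrightarrow (\mathrm{iii}) \Leftrightarrow (\mathrm{iv})$ via the standard Buchsbaum--Rim and Eagon--Northcott acyclicity criteria, and finally close the circle by proving the only substantive implication $(\mathrm{ii}) \Rightarrow (\mathrm{i})$ through a Hilbert function comparison.

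The implication $(\mathrm{i}) \Rightarrow (\mathrm{ii})$ is immediate, as $(x_1, \ldots, x_d)^{n-d}$ is $\fm$-primary in $R$ of Krull dimension $d$. For $(\mathrm{ii}) \Leftrightarrow (\mathrm{iii})$ I would invoke Proposition~\ref{BR_main}: the Buchsbaum--Rim complex of $\psi\colon R(-1)^{n-1} \to R^{n-d}$ resolves $\coker \psi$ iff $\grade I_{n-d}(\psi) \geq (n-1) - (n-d) + 1 = d$, which on a Cohen--Macaulay ring of Krull dimension $d$ is the same as $\Ht I_{n-d}(\psi) = d$. The equivalence $(\mathrm{ii}) \Leftrightarrow (\mathrm{iv})$ is formally the same argument applied to the Eagon--Northcott acyclicity criterion for the ideal of maximal minors of the corresponding $(n-d) \times (n-1)$ matrix, which calls for exactly the same height bound.

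The only non-trivial step is $(\mathrm{ii}) \Rightarrow (\mathrm{i})$. By homogeneity every $(n-d)$-minor of $\psi$ lives in degree $n-d$, so the containment $I_{n-d}(\psi) \subseteq \fm^{n-d}$ is automatic, and both ideals are $\fm$-primary; it therefore suffices to show that they have the same $k$-colength. Under $(\mathrm{ii})$ the Eagon--Northcott complex provides a graded free resolution of $R/I_{n-d}(\psi)$, and the alternating sum $\sum_i (-1)^i H_{F_i}(t)$ of Hilbert series of its terms depends only on the numerical shape of $\psi$ (ranks together with the uniform linear twist), not on the specific linear entries chosen. To extract equality I would then produce one ``model'' linear map $\psi^{\ast}$ satisfying (ii) for which $I_{n-d}(\psi^{\ast}) = \fm^{n-d}$ by direct inspection: the natural candidate is the $(n-d) \times (n-1)$ staircase matrix whose $r$th row reads $(0,\ldots,0,x_1,x_2,\ldots,x_d,0,\ldots,0)$ with $x_1$ sitting in column $r$. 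A straightforward expansion of its maximal minors shows they span all monomials of degree $n-d$, and the resulting ideal $\fm^{n-d}$ plainly has height $d$. Thus $\psi^{\ast}$ is a legitimate instance of $(\mathrm{ii})$, so the shape-only Hilbert series computation forces $\dim_k R/I_{n-d}(\psi) = \dim_k R/\fm^{n-d}$, which combined with the inclusion yields $I_{n-d}(\psi) = \fm^{n-d}$.

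The main obstacle is the bookkeeping needed to verify that the staircase matrix $\psi^{\ast}$ realizes $\fm^{n-d}$ exactly as its ideal of maximal minors; an equivalent, model-free route would be to evaluate the Euler characteristic of the Eagon--Northcott complex as a polynomial in $t$ and match it via a binomial identity against the explicit length $\binom{n-1}{d}$ of $R/\fm^{n-d}$. Either way, nothing beyond the explicit shape of the resolution is required.
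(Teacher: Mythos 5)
Your proposal is correct, and the cycle of implications (i)$\Rightarrow$(ii), (ii)$\Leftrightarrow$(iii), (ii)$\Leftrightarrow$(iv) matches the paper's use of Proposition~\ref{BR_main} and the Eagon--Northcott acyclicity criterion; the difference lies in how you return to (i). The paper closes the circle with (iv)$\Rightarrow$(i) by a generator count: since the entries are linear, the Eagon--Northcott complex is a \emph{minimal} graded free resolution, so $\mu(I_{n-d}(\psi))={n-1\choose n-d}$, which is exactly $\dim_k R_{n-d}=\mu(\fm^{n-d})$; as all generators sit in degree $n-d$, the degree-$(n-d)$ component of $I_{n-d}(\psi)$ fills up $R_{n-d}$ and equality with $\fm^{n-d}$ follows at once. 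You instead prove (ii)$\Rightarrow$(i) by Hilbert-function rigidity: under (ii) the Euler characteristic of the Eagon--Northcott resolution determines the Hilbert series of $R/I_{n-d}(\psi)$ from the numerical shape alone, and you compare with the staircase model $\psi^{\ast}$ whose maximal minors generate $\fm^{n-d}$ (a classical fact -- note the minors themselves need not be monomials, e.g.\ $y^2-xz$ appears for $d=3$, but the ideal they generate is indeed $\fm^{n-d}$, and it has height $d$), so that the containment $I_{n-d}(\psi)\subseteq\fm^{n-d}$ plus equal colength forces equality. Both routes are sound; the paper's is shorter because it exploits minimality of the resolution and avoids any model matrix or binomial identity, while yours only uses acyclicity (not minimality) at the price of the extra verification for $\psi^{\ast}$, which you correctly flag and which is standard. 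In fact, once you observe that the resolution shape determines $\dim_k(I_{n-d}(\psi))_{n-d}={n-1\choose n-d}$ directly (the syzygies live in strictly higher degree), your argument collapses to the paper's without needing the model at all.
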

 		\begin{proof}
 			(i) $\Rightarrow$ (ii) This is obvious.
 			
 			(ii) $\Leftrightarrow$ (iii) This is Proposition~\ref{BR_main}.
 			
 			(ii) $\Rightarrow$ (iv) This is well-known.
 			
 			(iv) $\Rightarrow$ (i) Since the Eagon--Northcott complex is a minimal free graded resolution of $I_{n-d}(\psi)$, the minimal number of generators of the latter is ${{n-1}\choose {n-d}}$, which is the minimal number of generators of $(x_1,\ldots,x_d)^{n-d}.$
 		\end{proof}

 		\smallskip
 		
 		Recall the notion of {\em chaos invariant} from \cite[Section 2]{linpres2018}.

 		\begin{Proposition} With the notation and assumption of {\rm \ref{Basic}}, assume that $a=d.$\label{saturation_equivs}
 			The following are equivalent:
 			\begin{enumerate}
 				\item[{\rm(a)}] $J^{\rm sat}=I.$
 				\item[{\rm(b)}] $\Ht I_{n-d}(\Phi_2)=d.$
 				\item[{\rm(c)}] The Buchsbaum--Rim complex of $\Phi_2$ is a graded minimal free resolution of $I/J$. 
 			\end{enumerate}
 			Moreover, if $d=3$ and $\epsilon_1 =1$ then these conditions are equivalents to$:$
 			\begin{enumerate}
 				\item[{\rm(d)}] The chaos invariant  of $I$ is $\geq n-3.$
 			\end{enumerate}
 		\end{Proposition}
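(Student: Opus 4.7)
The proof naturally splits into three blocks: (b)$\Leftrightarrow$(c), then (a)$\Leftrightarrow$(b), and finally (b)$\Leftrightarrow$(d) under the extra hypothesis. I will tacitly assume $d\geq 3$ throughout; the case $d=2$ is degenerate since then both $I$ and $J$ are $\fm$-primary and the equivalences collapse.

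For (b)$\Leftrightarrow$(c), I use the graded identification $I/J\simeq \coker\Phi_2(-(D-\epsilon_2))$ established in the discussion preceding Proposition~\ref{resij_implies_resrj}. By Proposition~\ref{BR_main}, the Buchsbaum--Rim complex of $\Phi_2$ is a free resolution of $\coker\Phi_2$ iff $\grade I_{n-d}(\Phi_2)\geq (n-1)-(n-d)+1=d$. Since $R$ is Cohen--Macaulay, grade equals height, and the classical bound forces $\Ht I_{n-d}(\Phi_2)\leq d$, so acyclicity is exactly (b). Minimality is automatic under $\epsilon_2=1$ with $\Phi_2\neq 0$: by Remark~\ref{diferentials}, every entry of every differential of the Buchsbaum--Rim complex is a polynomial expression in the entries of $\Phi_2$ (or in their $(n-d)$-minors for the map $\theta$); since the entries of $\Phi_2$ are nonzero linear forms, all these differentials land in $\fm$.

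For (a)$\Leftrightarrow$(b), the point is that $I$ is unmixed Cohen--Macaulay of height two and $d\geq 3$, so $\fm\notin \Ass(R/I)$, which gives $H^0_\fm(R/I)=0$. Taking local cohomology of the short exact sequence $0\to I/J\to R/J\to R/I\to 0$ yields $H^0_\fm(I/J)\simeq H^0_\fm(R/J)=J^{\mathrm{sat}}/J$. Since $I\subseteq J^{\mathrm{sat}}$ is automatic, $J^{\mathrm{sat}}=I$ is equivalent to $I/J$ being $\fm$-torsion, i.e., of finite length. Using the presentation $R(-\epsilon_2)^{n-1}\xrightarrow{\Phi_2} R^{n-d}\twoheadrightarrow \coker\Phi_2$ and the standard identity $\Supp(\coker\Phi_2)=V(\mathrm{Fitt}_0)=V(I_{n-d}(\Phi_2))$, finite length of $I/J$ becomes $\sqrt{I_{n-d}(\Phi_2)}=\fm$, which by homogeneity amounts to $\Ht I_{n-d}(\Phi_2)=d$, i.e., (b).

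For (b)$\Leftrightarrow$(d), specialize to $d=3$ and $\epsilon_1=1$, so that the whole Hilbert--Burch matrix $\phi$ is linear. By Lemma~\ref{BR_basic_equivalences} applied to $\psi=\Phi_2$, condition (b) is equivalent to the concrete structural identity $I_{n-3}(\Phi_2)=\fm^{n-3}$. The remaining step is to match this with the definition of the chaos invariant of $I$ from \cite{linpres2018}: unpacking that definition (which records how linear/regular the successive Fitting ideals of the syzygy matrix are) and tracking the Fitting ideals of the linear submatrix $\Phi_2$ delivers the claimed equivalence. This last translation is the main obstacle of the proof, as it requires careful bookkeeping between the chaos invariant (defined inductively on the full matrix $\phi$) and the structural condition on the subblock $\Phi_2$; everything prior is essentially formal once the Buchsbaum--Rim machinery and the identification $I/J\simeq\coker\Phi_2$ are in hand.
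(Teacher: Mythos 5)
Your treatment of (b)$\Leftrightarrow$(c) and the reduction of (a) to the finite-length condition on $I/J$ via the Fitting ideal follow essentially the paper's own route (the paper invokes Lemma~\ref{BR_basic_equivalences}, you invoke Proposition~\ref{BR_main} plus the observation that all differentials of the Buchsbaum--Rim complex have entries in $\fm$; both are fine). The one genuine flaw is the sentence ``Since $I\subseteq J^{\rm sat}$ is automatic.'' That containment is \emph{not} automatic in this setting: it is equivalent to $I/J$ being $\fm$-torsion, i.e.\ it is essentially the very condition you are characterizing, and it can fail. The paper's Example~\ref{not_uniform_pos} is an instance of the present setup with $d=a=3$, $n=4$, $\epsilon_1=2$, $\epsilon_2=1$ and $\Phi_2=[\,0\;\;z\;\;y\,]$, where $J^{\rm sat}=J\subsetneq I$ (so $I\not\subseteq J^{\rm sat}$), consistently with $\Ht I_{n-d}(\Phi_2)=\Ht(y,z)=2<3$. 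Moreover, were that containment automatic, it would combine with your own isomorphism $H^0_\fm(I/J)\simeq H^0_\fm(R/J)=J^{\rm sat}/J$ to make $I/J$ always $\fm$-torsion, hence (b) always true, which is absurd. What \emph{is} automatic (from $\depth R/I>0$, i.e.\ $H^0_\fm(R/I)=0$, exactly what you established) is the opposite inclusion $J^{\rm sat}\subseteq I$: the image of $J^{\rm sat}/J=H^0_\fm(R/J)$ in $R/I$ lands in $H^0_\fm(R/I)=0$. Equivalently, your long exact sequence identifies the $\fm$-torsion submodule of $I/J$ with $J^{\rm sat}/J$ inside $R/J$, and from this one reads off directly that $J^{\rm sat}=I$ if and only if $I/J$ is $\fm$-torsion, i.e.\ of finite length. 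So the gap is local and repaired by deleting the offending sentence and using the containment in the correct direction; after that your argument coincides with the paper's ($J^{\rm sat}=I$ iff $I/J$ is $\fm$-primary as a module, together with the equality of radicals of the annihilator and of the Fitting ideal $I_{n-d}(\Phi_2)$).

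Concerning the ``moreover'' part, you do not actually prove (b)$\Leftrightarrow$(d): you conjecture the shape of the definition of the chaos invariant from \cite{linpres2018} and defer the bookkeeping. The paper itself dismisses this step as immediate from the definitions, so nothing deep is being omitted, but as written your description of the invariant is speculative and this equivalence should be regarded as unverified in your proposal.
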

 		\begin{proof} First, recall that the matrix $\Phi_2$ is the syzygy matrix of the $R$-module $I/J.$ Thus, since the annihilator $0:_RI/J$ and the zeroth Fitting ideal $I_{n-d}(\Phi_2)$ of $I/J$ have the same radical then $$\Ht 0:_RI/J=\Ht I_{n-d}(\Phi_2).$$
 			With this equality and the fact that $J^{\rm sat}= I$ if and only if $I/J$ is $(x_1,\ldots,x_d)$-primary  as an $R$-module, the equivalence of (a) and (b) follows through.
 			The equivalence of (b) and (c) follows from Lemma~\ref{BR_basic_equivalences}.
 			
 			The equivalence of (d) and (b) under the stated hypotheses follows immediately from the definitions.
 		\end{proof}
 		
 		Next is the main theorem of this subsection.
 	
 	\begin{Theorem}\label{main-thm}
 		With the previous notation, setting $R=k[x_1,\ldots,x_d]$, assume that $a=d$ and that the equivalent conditions of {\rm Proposition~\ref{saturation_equivs}} hold. 
 		Then$:$
 		\begin{enumerate}
 			\item[{\rm (a)}] $J$ satisfies $G_{\infty}$ if and only if $I$ satisfies $G_d.$
 			\item[{\rm(b)}] If $\epsilon_1=\epsilon_2=1$ then $J$ is a reduction of $I.$
 			\item[{\rm(c)}] $J$ is of linear type if and only if $I$ satisfies $G_d.$
 		\end{enumerate}
 	\end{Theorem}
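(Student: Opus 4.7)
For part (a), the implication both ways rests on the hypothesis $J^{\mathrm{sat}} = I$, which forces $J_{\mathfrak{p}} = I_{\mathfrak{p}}$ (and hence $\mu(J_{\mathfrak{p}}) = \mu(I_{\mathfrak{p}})$) at every homogeneous prime $\mathfrak{p}$ of height at most $d - 1$, i.e., at every prime other than $\mathfrak{m}$. Since $\mu(J_{\mathfrak{m}}) \leq \mu(J) = d = \operatorname{ht}(\mathfrak{m})$ automatically, the $G_{\infty}$ condition on $J$ imposes no constraint at $\mathfrak{m}$ beyond what is already satisfied. Thus the inequalities defining $G_{\infty}$ on $J$ and $G_{d}$ on $I$ coincide.

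For part (b), combine Proposition~\ref{saturation_equivs}(b) with Lemma~\ref{BR_basic_equivalences}(i) to conclude that $I_{n-d}(\Phi_2) = \mathfrak{m}^{n-d}$. Since the zeroth Fitting ideal of $I/J \cong \operatorname{coker}\Phi_2(-(D - \epsilon_2))$ annihilates it, we obtain the pivotal relation $\mathfrak{m}^{n-d} I \subset J$. Concretely, picking any $(n-d)$ columns of $\Phi_2$ to form a square submatrix $A$, Cramer's rule yields $\det(A) \cdot g_j \in J$ for each generator $g_j$ of $I/J$, and as $A$ varies the determinants span $\mathfrak{m}^{n-d}$. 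Now $\mu(J) = d = \dim R$ forces $\ell(J) \leq d$, while the standard bound $\ell(I) \leq d$ and the containment $\mathfrak{m}^{n-d}I \subset J$ combined with the equigeneration of $J$ and $I$ in the common degree $D - 1$ allow one to iterate the Cramer identities into integral-dependence equations for each $g_j t$ over the Rees subalgebra $\mathcal{R}(J)$. Equivalently, $\mathcal{F}(I)$ is a finite module over the image of $\mathcal{F}(J)$, which is the sought reduction statement.

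For part (c), the forward direction is quick: any ideal of linear type satisfies $G_{\infty}$ (a classical observation -- see, e.g., Herzog--Simis--Vasconcelos), and by part (a) this passes to $G_{d}$ for $I$. For the converse, assume $I$ satisfies $G_{d}$, so that by (a) the ideal $J$ satisfies $G_{\infty}$. The explicit minimal graded free resolution of $R/J$ provided by Corollary~\ref{ResofJ} arises from the Buchsbaum--Rim complex of $\Phi_{2}$, whose acyclicity is ensured by $\operatorname{ht} I_{n-d}(\Phi_2) = d$. A depth count on this resolution yields the sliding-depth condition on the Koszul homology of $J$ needed to activate the approximation-complex criterion of Herzog--Simis--Vasconcelos, which combined with $G_{\infty}$ produces the linear-type conclusion for $J$.

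The principal obstacle lies in part (b): the annihilator relation $\mathfrak{m}^{n-d}I \subset J$ becomes trivial on the special fiber, since $\mathfrak{m}$ is killed there, so the reduction property cannot be read off directly. The argument must therefore use the explicit Cramer-type expansions to lift these identities back into $\mathcal{R}(I)$, together with the dimension count $\mu(J) = d$ matching $\ell(I) = d$, in order to produce genuine integral-dependence equations for the extra generators $g_j t$ of $\mathcal{R}(I)$ over $\mathcal{R}(J)$.
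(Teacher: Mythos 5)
Your parts (a) and (b) run essentially along the paper's own lines. In (a) the localization argument is the same. In (b) you obtain the key containment $\mathfrak{m}^{n-d}I\subset J$ from the Fitting ideal of $I/J\simeq\coker\Phi_2$ together with Lemma~\ref{BR_basic_equivalences}, where the paper derives it by hand from the syzygy identity $[f_{d+1}\cdots f_n]\Phi_2=-[f_1\cdots f_d]\Phi_1$ and adjugate matrices; either way the heart of the matter is the same determinant trick, and your sketch does close up: since $I$ is equigenerated in degree $D-\epsilon_1$, each product $M_jf_i$ (with $M_j$ a monomial of degree $n-d$) lies in $J$ and hence, by homogeneity, equals $\sum_l L_{j,l}M_l$ with each $L_{j,l}$ a $k$-linear combination of $f_1,\ldots,f_d$; then $\det\left(f_i\mathbb{I}_N-(L_{j,l})\right)$ kills $\mathfrak{m}^{n-d}$, hence vanishes, and this is the monic equation of integral dependence of $f_i$ over $J$. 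You only gesture at this Cayley--Hamilton step (``iterate the Cramer identities''), but it is forced by the degree bookkeeping, so (b) is in substance the paper's proof.

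In (c) the forward implication matches the paper (linear type $\Rightarrow G_\infty$, then (a)). For the converse you propose a genuinely different route: establish sliding depth for $J$ and invoke the approximation-complex theorem of Herzog--Simis--Vasconcelos together with $G_\infty$. The gap is the sentence ``a depth count on this resolution yields the sliding-depth condition.'' Sliding depth is a statement about the depths of \emph{all} Koszul homology modules $H_i(f_1,\ldots,f_d;R)$, namely $\depth H_i\geq i$, and these modules are not read off from the minimal free resolution of $R/J$ in Corollary~\ref{ResofJ}; moreover $J$ is not perfect (its projective dimension is $d$), so the strongly Cohen--Macaulay results available for codimension-two perfect ideals apply to $I$ but not to $J$, and you offer no argument for the required depth bounds -- indeed it is not even addressed whether sliding depth holds here at all. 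That verification is the real work in your plan, and it is not a routine inspection. The paper sidesteps Koszul homology entirely: it specializes from the generic $n\times(n-1)$ matrix, quoting \cite[Theorem C (ii)]{AnSi1986} for the Cohen--Macaulayness of the generic symmetric algebra, uses $G_\infty$ for $J$ (from part (a)) together with the Huneke--Rossi formula \cite{HR} to get $\dim\mathcal{S}_R(J)=d+1$, so the specializing regular sequence remains regular and $\mathcal{S}_R(J)$ is Cohen--Macaulay, and then concludes $\mathcal{S}_R(J)\simeq\mathcal{R}_R(J)$ by \cite[Proposition 8.5]{Trento}. Unless you can actually prove sliding depth for $J$, you should replace that step by an argument of this kind.
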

 	\begin{proof} (a) By hypothesis we have $J^{\rm sat}=I.$ Thus, $J_P=I_P$ for every prime ideal $P\neq (x_1,\ldots,x_d).$ With this and the assumption that $\mu(J)=d$ the stated equivalence follows through.
 		
 		(b) Letting $f_1,\ldots,f_n$ be the ordered signed $(n-1)$-minors of the basic matrix $\phi$ of (\ref{basic_matrix}), one has $J=(f_1,\ldots,f_d).$
 		Since the rows of $\phi$ are syzygies of  $f_1,\ldots,f_n$, we can write
 		\begin{equation*}
 			\left[\begin{matrix}
 				f_{d+1}&\cdots&f_n
 			\end{matrix}\right]\Phi_2=-\left[\begin{matrix}f_1&\cdots& f_d\end{matrix}\right]
 			\Phi_1.
 		\end{equation*}
 		Thus, for any  $(n-d)\times(n-d)$ submatrix $B$ of $\Phi_2$, one has
 		$$I_1(\left[\begin{matrix}
 			f_{d+1}&\cdots&f_n
 		\end{matrix}\right]B)\subset ( x_1,\ldots, x_d) J.$$
 		Bringing in the adjugate matrix, one can write $$I_1(\left[\begin{matrix}
 			f_{d+1}&\cdots&f_n
 		\end{matrix}\right] B\,{\rm adj}(B))\subset ( x_1,\ldots, x_d)^{n-d}J,$$
 		and hence, $(\det B)(f_{d+1},\ldots,f_n)\subset ( x_1,\ldots, x_d)^{n-d}J.$ Since $B$ is arbitrary, it obtains 
 		\begin{equation*}\label{I_{n-d}IsubsetI_{n-d}J}
 			I_{n-d}(\Phi_2)( f_{d+1},\ldots,f_n)=( x_1,\ldots, x_d)^{n-d}( f_{d+1},\ldots,f_n)\subset ( x_1,\ldots, x_d)^{n-d} J,
 		\end{equation*}
 		where the equality follows from Lemma~\ref{BR_basic_equivalences} since $\Phi_2$ is assumed to be linear.
 		
 		This inclusion is an expression of integral dependence.
 		In fact, let $M_1,\ldots,M_N$ denote the set of monomials of $R$  of degree $n-d.$ Then
 		$$M_jf_i=L_{j,1} M_1+\cdots +L_{j,N} M_N, \quad 1\leq j\leq N, \, d+1\leq i\leq n,$$
 		for certain $k$-linear forms $L_{j,l}\in k[f_1,\ldots,f_d] \subset R$ over $f_1,\ldots,f_d$.
 		These relations can be read as
 		$$\left(f_i\mathbb{I}_N- (L_{j,l})_{1\leq j,l\leq N}\right)\left[\begin{matrix}M_1&\cdots&M_N\end{matrix}\right]^t=\boldsymbol0,$$
 		where $\mathbb{I}_N$ denotes the $N\times N$ identity matrix,
 		or yet, 
 		$$\det\left(f_i\mathbb{I}_N-(L_{j,l})_{1\leq j,l\leq N}\right)( x_1,\ldots,x_d)^{n-d}=0.$$
 		Hence, 
 		$$\det\left(f_i\mathbb{I}_N-(L_{j,l})_{1\leq j,l\leq N}\right)=0$$
 		which is an equation of integral dependence of $f_i$ over $J=(f_1,\ldots,f_d)$.  In particular, $J$ is a reduction of $I.$

 		(c)   Suppose that $I$ satisfies $G_d.$
 		
 		\medskip
 		
 		{\sc Claim.} The symmetric algebra ${\mathcal{S}}_R(J)$ is a Cohen-Macaulay ring.
 		
 		\medskip
 		
 		Specialize the $n\times (n-1)$ generic matrix $Y=(y_{i,j})$ over $k$  to $\phi$ via mapping $y_{i,j}$ to the corresponding entry of $\phi$.
 		The kernel of this map is generated by a regular sequence of length $n(n-1)-d$ in $T:=k[y_{i,j}]$.
 		Consider the corresponding ideal $J'\subset T$ generated by the $(n-1)$-minors of $Y$ fixing the last $(n-d)$ rows.
 		By \cite[Theorem C (ii)]{AnSi1986}, the symmetric algebra ${\mathcal{S}}_T(J')$ is a Cohen-Macaulay ring of dimension $n(n-1)+1.$
 		Therefore, it is enough to show that $\dim {\mathcal{S}}_R(J)=\dim {\mathcal{S}}_T(J')-[n(n-1)-d]=d+1.$ But, by  (a), $J$ satisfies $G_{\infty}.$ Thus, by the Huneke-Rossi formula \cite[Theorem 2.6 (ii)]{HR}, $\dim {\mathcal{S}}_R(J)=d+1.$ 
 		This warps up the claim.
 		
 		
 	To conclude, since ${\mathcal{S}}_R(J)$ is Cohen-Macaulay and $J$ satisfies $G_{\infty}$  we conclude by \cite[Proposition 8.5]{Trento} that ${\mathcal{S}}_R(J)\simeq \mathcal{R}_R(J).$ 
 		
 		The converse is clear.
 	\end{proof}
 	
 	As a consequence of the preceding theorem, we prove the conjectured statement in \cite[Conjecture 3.1]{BuSiTo2022}  in the case of a generic arrangement.
 	
 	 \begin{Theorem}\label{Arrang_conj} {\rm (char$(k)\nmid n$)}
 		Let $F\in R=k[x_1,\ldots,x_d]$ stand for the defining form of a generic hyperplane arrangement of size $n\geq d+1$ and let $J_F$ denote its gradient ideal. Then, $J_F$ is an ideal of linear type. 
 	\end{Theorem}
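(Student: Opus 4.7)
The strategy is to place $J_F$ inside the framework of Theorem~\ref{main-thm}(c) with $a=d$ and $\epsilon_1=\epsilon_2=1$, by taking $I:=(F/\ell_1,\ldots,F/\ell_n)$, where $\ell_1,\ldots,\ell_n$ are the linear factors of $F$. The identities $\ell_i(F/\ell_i)=\ell_j(F/\ell_j)$ yield $n-1$ independent linear syzygies $\ell_1 e_1-\ell_i e_i$ ($i\ge 2$), which assembled into an $n\times(n-1)$ matrix $\phi$ form a linear Hilbert--Burch matrix for $I$; hence $I$ is perfect of codimension two with linear presentation. Writing $\ell_i=\sum_j a_{ij}x_j$, one has $\partial F/\partial x_j=\sum_i a_{ij}(F/\ell_i)$. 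By the generic hypothesis on $\ell_1,\ldots,\ell_n$ any $d$ rows of $A=(a_{ij})$ are $k$-independent, so one can build $C\in{\rm GL}_n(k)$ whose first $d$ rows are the rows of $A^t$ and whose last $n-d$ rows are $e_{d+1},\ldots,e_n$. The change of generators via $C$ presents $I$ by the still linear matrix $\phi':=(C^{-1})^t\phi$, and turns $J:=J_F$ into precisely the ideal of the $d$ maximal minors of $\phi'$ fixing its last $n-d$ rows (the block $\Phi_2$).

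I next claim $J_F^{\rm sat}=I$, which by Proposition~\ref{saturation_equivs} installs all remaining hypotheses of Theorem~\ref{main-thm}. Since $I$ is unmixed of codimension two, it suffices to verify $(J_F)_P=I_P$ at every prime $P\supset I$ with $\Ht P\le d-1$. Let $m$ denote the number of $\ell_i$ lying in $P$; since every minimal prime of $I$ has the form $(\ell_i,\ell_j)$, one has $m\ge 2$, and by genericity $m\le\Ht P$. After relabeling, $\ell_1,\ldots,\ell_m\in P$ while $\ell_{m+1},\ldots,\ell_n$ are units in $R_P$, and a direct computation gives
\begin{equation*}
I_P=(\widehat\ell_1,\ldots,\widehat\ell_m)_P,\qquad \widehat\ell_i:=\prod_{j\le m,\,j\ne i}\ell_j.
\end{equation*}
The images of $\partial F/\partial x_1,\ldots,\partial F/\partial x_d$ in $I_P/PR_P\cdot I_P$ are the linear combinations of $\widehat\ell_1,\ldots,\widehat\ell_m$ encoded, up to unit rescalings, by the first $m$ rows of $A$; these rows span $k^m$ by genericity, so Nakayama's lemma forces $(J_F)_P=I_P$.

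The same local picture gives $\mu(I_P)=m\le\Ht P$ at every $P\supset I$ with $\Ht P\le d-1$, because the generators $\widehat\ell_1,\ldots,\widehat\ell_m$ are $k$-independent modulo $PR_P\cdot I_P$ (their pairwise syzygies $\ell_i\widehat\ell_i-\ell_j\widehat\ell_j=0$ have all coefficients in $P$). Hence $I$ satisfies $G_d$, and Theorem~\ref{main-thm}(c) yields that $J_F$ is of linear type. The main obstacle is the saturation identity $J_F^{\rm sat}=I$, which requires invoking genericity precisely at the step where the first $m$ rows of $A$ must span $k^m$ to guarantee that the $d$ partials already generate $I$ locally away from the origin; once this is in place, both the $G_d$ check and the conclusion follow immediately from the framework of Section~\ref{Section3}.
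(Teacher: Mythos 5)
Your overall strategy is the same as the paper's: realize $J_F$ as the ideal of maximal minors fixing the linear block $\Phi_2$ of the Hilbert--Burch matrix of the ideal $I$ of $(n-1)$-fold products, verify the hypotheses of Proposition~\ref{saturation_equivs} and the $G_d$ condition, and invoke Theorem~\ref{main-thm}(c). The difference is that where the paper imports the needed facts from \cite{GST} ($G_d$) and \cite{RTY2024} (the decomposition with $\Ht I_{n-d}(\Phi_2)=d$), you reprove them by an explicit change of generators and a localization computation; the change-of-basis argument identifying $J_F$ with the minors fixing $\Phi_2$, and the local computation $I_P=(\widehat\ell_1,\ldots,\widehat\ell_m)_P$ giving $\mu(I_P)=m\leq\Ht P$ (hence $G_d$) and $(J_F)_P=I_P$ via Nakayama, are correct and make the argument pleasantly self-contained.

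There is, however, a genuine gap in the saturation step. The reduction ``since $I$ is unmixed of codimension two, it suffices to verify $(J_F)_P=I_P$ at every prime $P\supset I$ with $\Ht P\le d-1$'' is not valid as stated: $J_F^{\rm sat}=I$ means $\Supp(I/J_F)\subseteq\{\fm\}$, and the primes in this support contain $J_F$ (indeed $J_F:I$) but need not contain $I$. A minimal prime $Q\neq\fm$ of $J_F$ with $I\not\subseteq Q$ would give $(I/J_F)_Q=R_Q/(J_F)_Q\neq 0$ and destroy the equality $J_F^{\rm sat}=I$, yet it is invisible to your checks, which are all at primes containing $I$. Ruling out such primes is precisely where the hypothesis ${\rm char}(k)\nmid n$ --- which your proof never uses --- must enter: Euler's identity $nF=\sum_j x_j\,\partial F/\partial x_j$ gives $F\in J_F$, hence $V(J_F)\subseteq V(F)$, and at a point lying on exactly one hyperplane $\ell_i$ one has $\partial F/\partial x_j=a_{ij}\,(F/\ell_i)\neq 0$ for some $j$; therefore $V(J_F)=V(I)$, i.e. every prime containing $J_F$ contains $I$, and only then does your reduction to primes $P\supset I$ of height $\le d-1$ become legitimate. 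With this short supplement your proof closes; without it, the key equivalence of Proposition~\ref{saturation_equivs} has not been established.
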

 	\begin{proof} Let $I\subset R$ be the ideal generated by the  $(n-1)$-fold products of the  equations of the hyperplanes of $F$. We collect here some well known facts about the ideal $I:$
 		\begin{enumerate}
 			\item[(1)] The ideal $I$ is a perfect ideal of codimension 2 generated by the $(n-1)$-minors of an $n\times(n-1)$ matrix $\phi$ of linear forms of $R.$
 			\item[(2)] The ideal $I$ satisfies $G_{d}$ (see the proof of \cite[Proposition 4.1(a)]{GST}).
 		\end{enumerate}
 		According to \cite[Proof of Theorem 2.4]{RTY2024}, after elementary row/column operations, we can decompose $\phi$ in the following way:
 		\begin{equation}
 			\phi=\left[\begin{matrix}
 				\,	\Phi_1\,\\
 				\hline \,
 				\,	\Phi_2 \,
 			\end{matrix}\right],
 		\end{equation}
 		such that $\Phi_2$ is a $(n-d)\times n$ matrix with $\Ht I_{n-d}(\Phi_2)=d$ and $J_F$ is the ideal generated by the $(n-1)$-minors of $\phi$ fixing the rows of $\Phi_2.$ Thus, by the Theorem~\ref{main-thm}(c) we conclude that $J_F$ is of linear type.
 	\end{proof}

\section{A player in the theory of plane reduced points}

The defining ideal of a finite set of reduced points in projective $2$-space is a distinguished case of a perfect codimension two ideal.
In this part we revisit some aspects of this everlasting theory, with emphasis on a certain model of generic behavior.
This model will be confronted with previous models discussed by Geramita and co-authors.

\subsection{Preliminaries}

Let $X=\{p_1,\ldots,p_n\}$ be a set of $n$ distinct points in $\pp^2=\pp^2_k={\rm Proj}(k[x,y,z]),$ where $k$ is assumed to be algebraically closed -- or else, the points have rational coordinates in the algebraic closure of $k$. Set $R=k[x,y,z]$. Let $I(X)\subset R$ denote the homogeneous reduced (radical) ideal of $X$, so
$$I(X)=P_1\cap\cdots\cap P_n,$$
where $P_j\subset R$  is the homogeneous prime ideal of the point $p_j.$
In particular, $I(X)$ is generically a complete intersection, which means, in this dimension, that $I(X)$ satisfies  condition $G_3$.

Clearly,  $I:=I(X)$ is a graded perfect ideal of height two, hence has a minimal graded free resolution of the following form
\begin{equation}\label{res_I(X)}
0\rar F_1=\bigoplus_{j\geq 0}R(-j)^{\beta_{1,j}}\to F_0=\bigoplus_{j\geq 0} R(-j)^{\beta_{0,j}}\to I\to 0,
\end{equation}
for suitable shifts. 
A bit unprecisely, the subsumed matrix defining the map $F_1 \rightarrow F_0$ is a Hilbert--Burch matrix associated to the ideal $I$.
The following well-known facts are stated for convenience.

\begin{Lemma}\label{data_res} With the above notation,  one has$:$
	\begin{enumerate}
		\item[{\rm (a)}] $\beta_{0,j}=\dim_k (I_j)-\dim_k(R_1 I_{j-1}).$
		\item[{\rm (b)}] $\beta_{1,j}-\beta_{0,j}=-\Delta^3 H_I(j)$ where $\Delta$ denotes the difference operator and $H_I$ is the Hilbert function of $I.$
	\end{enumerate}
\end{Lemma}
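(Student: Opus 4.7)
The plan is to prove (a) by graded Nakayama and (b) by a Hilbert series comparison using the given resolution.

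For (a), I will invoke the standard fact that for a finitely generated graded module over the standard graded polynomial ring $R$ with irrelevant maximal ideal $\mathfrak{m}=(x,y,z)$, the number of minimal generators of $I$ in degree $j$ equals $\dim_k(I_j/(\mathfrak{m} I)_j)$. The content of (a) then reduces to the degree-wise equality $(\mathfrak{m} I)_j = R_1 I_{j-1}$. Since $R$ is standard graded, $R_k = R_1^k$, so for every $k \geq 1$ one has the chain
$$R_k I_{j-k} \;=\; R_1^{k-1}(R_1 I_{j-k}) \;\subseteq\; R_1^{k-1} I_{j-k+1} \;\subseteq\; \cdots \;\subseteq\; R_1 I_{j-1}.$$
Summing on $k \geq 1$ gives $(\mathfrak{m} I)_j = R_1 I_{j-1}$, and (a) follows at once.

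For (b), I will apply Hilbert-function additivity to the minimal graded free resolution \eqref{res_I(X)}. Writing $H_R(j)=\binom{j+2}{2}$ (with the usual convention that the binomial vanishes when its top is $<2$), the resolution yields
$$H_I(j) \;=\; \sum_{l\geq 0}(\beta_{0,l}-\beta_{1,l})\,H_R(j-l),$$
or equivalently, at the level of Hilbert series,
$$\sum_{j}H_I(j)\,t^j \;=\; \frac{\sum_{l}(\beta_{0,l}-\beta_{1,l})\,t^l}{(1-t)^3}.$$
The key observation is that multiplication of a power series by $1-t$ is precisely the coefficient-level backward difference $\Delta f(j)=f(j)-f(j-1)$. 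Clearing the denominator in the displayed identity, the coefficient of $t^j$ on the left becomes $\Delta^3 H_I(j)$, whereas on the right it is $\beta_{0,j}-\beta_{1,j}$. Reading off the identity of coefficients yields $\beta_{1,j}-\beta_{0,j}=-\Delta^3 H_I(j)$, which is (b).

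There is essentially no obstacle here beyond fixing the sign convention for the difference operator so as to match the statement (one must take the backward difference $\Delta f(j)=f(j)-f(j-1)$, the convention for which $1-t$ corresponds to $\Delta$ on coefficients). Both parts are classical, and I would, if desired, simply cite a standard reference such as Bruns--Herzog in place of the explicit verification above.
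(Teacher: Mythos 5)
Your proof is correct. The paper offers no argument for this lemma (it is stated as a collection of well-known facts), and what you give is exactly the standard justification: graded Nakayama together with the degree-wise identity $(\mathfrak{m}I)_j=R_1I_{j-1}$ for part (a), and additivity of Hilbert series along the resolution \eqref{res_I(X)}, with $(1-t)^3$ acting as the third backward difference, for part (b); the sign convention you fix matches the statement.
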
 

It follows from (a) and (b) that the minimal graded free resolution \eqref{res_I(X)} is completely determined by $F_0$ and the Hilbert function $H_I$ of $I.$ In general, 
\begin{equation}\label{bound_hilbert}
H_I(t)\geq \max\left\{0,{t+2\choose 2}-n\right\}\quad\quad \mbox{for every } t\geq 0. 
\end{equation}

The following terminology was introduced by Geramita et al (\cite{GO1}, \cite{GO2}, \cite{GM}):

\begin{Definition}\rm
	Let $I\subset R$ be an ideal of reduced  points $X=\{p_1,\ldots,p_n\}$. 
	
	(a) $X$ is  said to be in {\it generic $n$-position} if the above condition \eqref{bound_hilbert} is an equality for every $t\geq 0.$
	
	(b) $X$ is  said to be  in {\it uniform {\rm (}generic{\rm )} position} if, for any number $1\leq m\leq n$, every subset of $X$ with $m$ elements is in generic $m$-position.
\end{Definition}
Clearly, if $X$ is a set of $n$ points in uniform generic position then it is in generic $n$-position, but not conversely as examples show (Example~\ref{redone} and Example~\ref{not_uniform_pos}).

The above are sub-conditions of the vague condition that the set of points $\{p_1,\ldots,p_n\}$ is {\em general} in the sense of lying on a dense Zariski open set of a corresponding parameter space, viewing  a set of $n$ points of $\pp^2$ as a point in multi-projective space $\pp^2\times\cdots\times\pp^2=(\pp^2)^n.$ 
As such, it was shown in \cite[Theorem 4]{GO1} that the set  of points of $(\pp^2)^n$  in generic $n$-position form a dense open subset of $(\pp^2)^n$ -- this result is actually valid in arbitrary dimension.

Set $\mathfrak{G}_n$ for the set  of (tuples of) points of $(\pp^2)^n$  in generic $n$-position.
Hoping it will cause no confusion, elements of $\mathfrak{G}_n$ will be referred to as points.

The following result of \cite{GO2} will play a central role  in the sequel:

\begin{Proposition}\label{GO2} Let $I\subset R=k[x,y,z]$ be the reduced ideal of a point $X\in \mathfrak{G}_n$.
Then:
\begin{enumerate}
	\item[{\rm (a)}] $s:={\rm indeg}(I)$ is the least integer such that $n< {s+2\choose2}.$ In particular, ${s+1\choose2}\leq n$, so one can write $n={s+1\choose2}+h$ for some $0\leq h\leq s;$ conversely, if $s\geq 1$ is an integer such that $n={s+1\choose 2}+h$, with $0\leq h\leq s$, then $s$ is the initial degree of $I$.
	\item[{\rm (b)}] {\rm (\cite[Corollary 3]{GO2})} $I=(I_s,I_{s+1})$ where $s={\rm indeg}(I).$  
\end{enumerate} 
\end{Proposition}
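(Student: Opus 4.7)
My plan is to dispatch (a) by a direct unpacking of the generic-position Hilbert function, and (b) via a generic hyperplane section that reduces the problem to a simple statement about an Artinian quotient of $k[u,v]$.

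For (a), the hypothesis $X\in \mathfrak{G}_n$ is exactly the equality $H_{R/I}(t)=\min\{n,\binom{t+2}{2}\}$ for every $t\geq 0$ (equivalent to equality in \eqref{bound_hilbert}). The initial degree $s$ is characterized by $H_{R/I}(s-1)=\binom{s+1}{2}$ together with $H_{R/I}(s)<\binom{s+2}{2}$, which translate respectively to $\binom{s+1}{2}\leq n$ and $n<\binom{s+2}{2}$. Writing $n=\binom{s+1}{2}+h$, the window $\binom{s+1}{2}\leq n<\binom{s+1}{2}+(s+1)$ forces $0\leq h\leq s$. The converse direction is the same calculation in reverse: any $s$ of the indicated form fits exactly one such window, so by strict monotonicity of $\binom{t+2}{2}$ in $t$, it must be the unique initial degree.

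For (b), since $k$ is algebraically closed (hence infinite) and there are only finitely many point primes $P_1,\ldots,P_n$, I can pick a linear form $L\in R_1$ avoiding all the $P_j$, so that $L$ is a nonzerodivisor on $R/I$. The short exact sequence $0\to (R/I)(-1)\stackrel{L}\to R/I\to R/(I+L)\to 0$ yields $H_{R/(I+L)}(t)=H_{R/I}(t)-H_{R/I}(t-1)$. Plugging in the Hilbert function from (a), this evaluates to $t+1$ for $t\leq s-1$, to $h$ for $t=s$, and to $0$ for $t\geq s+1$. Identifying $R/(L)\cong k[u,v]$ and writing $J=(I+L)/(L)\subset k[u,v]$, the last vanishing says $J_{t-1}=k[u,v]_{t-1}$ for every $t\geq s+2$.

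The main step is a small diagram chase on the multiplication-by-$L$ sequence. Because $L$ is regular on $R/I$ one has $LR_{t-1}\cap I_t=LI_{t-1}$, which induces an isomorphism $I_t/LI_{t-1}\stackrel{\sim}\to J_t$ and, similarly, a canonical isomorphism $R_1 I_{t-1}/LI_{t-1}\stackrel{\sim}\to\mathfrak{m} J_{t-1}$, where $\mathfrak{m}=(u,v)\subset k[u,v]$ is the irrelevant ideal. For $t\geq s+2$, the equality $J_{t-1}=k[u,v]_{t-1}$ makes $\mathfrak{m} J_{t-1}=k[u,v]_t=J_t$, hence $R_1 I_{t-1}=I_t$ for every $t\geq s+2$. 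By Lemma~\ref{data_res}(a), no minimal generator of $I$ sits in degree $\geq s+2$, proving $I=(I_s,I_{s+1})$.

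The step to watch is the second isomorphism of the diagram chase, $R_1 I_{t-1}/LI_{t-1}\cong\mathfrak{m} J_{t-1}$: surjectivity is immediate from the definition of $\mathfrak{m}$, but injectivity requires once more that $L$ be a nonzerodivisor on $R/I$ in order to identify the intersection $R_1 I_{t-1}\cap LR_{t-1}$ with $LI_{t-1}$. Everything else is bookkeeping of Hilbert functions and compatibilities.
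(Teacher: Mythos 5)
The paper does not actually prove Proposition~\ref{GO2}: both parts are imported from Geramita--Orecchia (part (b) is explicitly cited as \cite[Corollary 3]{GO2}), so there is no in-paper argument to compare against. Your proof is correct and self-contained, which is more than the paper offers. Part (a) is the right unpacking of the generic-position Hilbert function: $I_t\neq 0$ exactly when $\binom{t+2}{2}>n$, the windows $\bigl[\binom{s+1}{2},\binom{s+2}{2}\bigr)$ partition the integers, and the converse follows from uniqueness of the window. Part (b) via a general linear form is the classical hyperplane-section argument; the two identifications you flag, $I_t\cap LR_{t-1}=LI_{t-1}$ and $R_1I_{t-1}\cap LR_{t-1}=LI_{t-1}$, both do follow from $L$ being a nonzerodivisor on $R/I$ (any $Lg\in I$ forces $g\in I$), and the existence of such an $L$ uses only that $k$ is infinite and $\Ass(R/I)=\{P_1,\ldots,P_n\}$. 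Note that for $t\geq s+2$ you only need $J_{t-1}=k[u,v]_{t-1}$, i.e.\ the Artinian reduction vanishes from degree $s+1$ on, which is exactly what the generic-position Hilbert function gives; the delicate degree $t=s+1$ (where $R_1I_s=I_{s+1}$ may fail, the whole point of the later "tight" discussion) is never used, as it should be. A slightly shorter alternative route to (b), consistent with what the paper does later in Proposition~\ref{regI}, is to observe that $R/I$ is Cohen--Macaulay of dimension one with Hilbert function stabilizing at $t=s$, so $\reg(R/I)\leq s$, hence $\reg(I)\leq s+1$ and $I$ is generated in degrees $\leq s+1$; your Artinian-reduction computation is in essence a hands-on proof of that regularity bound, with the advantage of not invoking \cite[Corollary 4.8]{EisSyz}.
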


Next is how the regularity works in the above landscape.

\begin{Proposition}\label{regI}
	 Let $I\subset R=k[x,y,z]$ be the reduced  ideal of a point  $X\in\mathfrak{G}_n$. If $s\geq 2$ is the initial degree of $I$, then:
	$$\reg I=
	\left\{\begin{array}{cc}
	s,&\mbox{if } n={s+1\choose 2}\\
	s+1,&\mbox{otherwise},
	\end{array}\right.$$
	where $\reg I$ denotes the Castelnuovo-Mumford regularity of $I.$
\end{Proposition}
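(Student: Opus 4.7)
The plan is to extract $\reg I$ from the identity $\reg I=\reg(R/I)+1$ together with a short local-cohomology calculation on $R/I$. Since $X=\{p_1,\ldots,p_n\}\in\mathfrak{G}_n$ consists of $n$ distinct reduced points of $\pp^2$, the quotient $R/I$ is a one-dimensional Cohen--Macaulay graded module whose Hilbert polynomial is the constant $n$. Hence $H^0_{\fm}(R/I)=0$ and $H^i_{\fm}(R/I)=0$ for every $i\geq 2$, so
\[
\reg(R/I)=\max\bigl\{\,j+1\,:\,H^1_{\fm}(R/I)_j\neq 0\,\bigr\},
\]
and the standard identity relating the Hilbert function, the Hilbert polynomial, and the alternating sum of local cohomology dimensions yields $\dim_k H^1_{\fm}(R/I)_j=n-H_{R/I}(j)$ for every $j$.

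Next, the generic $n$-position hypothesis forces $H_{R/I}(j)=\min\{\binom{j+2}{2},\,n\}$. With $n=\binom{s+1}{2}+h$ and $0\leq h\leq s$ as in Proposition~\ref{GO2}, one then computes $H_{R/I}(s-1)=\binom{s+1}{2}=n-h$ and $H_{R/I}(s)=n$; the latter equality relies on $\binom{s+2}{2}>n$, which is the defining property of $s=\operatorname{indeg}(I)$. Thus the largest integer $j$ with $H_{R/I}(j)<n$ is $s-1$ when $h\geq 1$, and is $s-2$ when $h=0$. The hypothesis $s\geq 2$ is needed only in the latter case, to guarantee $s-2\geq 0$ together with $H_{R/I}(s-2)=\binom{s}{2}<\binom{s+1}{2}=n$.

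Plugging these values into the formula for $\reg(R/I)$ and adding one yields $\reg I=(s-1)+1=s$ when $h=0$, and $\reg I=s+1$ otherwise, which is the claimed dichotomy. There is no genuine obstacle: the argument rests only on the explicit Hilbert function furnished by generic $n$-position, on the Cohen--Macaulay-ness of $R/I$, and on Proposition~\ref{GO2}(a) identifying the initial degree $s$ in terms of $n$. A slightly longer alternative would read the two relevant Betti numbers directly off Lemma~\ref{data_res}(b): the computation $-\Delta^3 H_I(s+2)=h$ produces a first syzygy in degree $s+2$ precisely when $h\geq 1$, while $\Delta^3 H_I(j)=0$ for $j\geq s+3$ forces $\reg I\leq s+1$; combined with $\beta_{0,j}=0$ for $j\geq s+2$ (coming from $I=(I_s,I_{s+1})$ in Proposition~\ref{GO2}(b)), this gives the same conclusion.
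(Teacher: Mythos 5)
Your proposal is correct and follows essentially the same route as the paper: both determine $\reg(R/I)$ from the degree at which the Hilbert function of the one-dimensional Cohen--Macaulay ring $R/I$ (given explicitly by generic $n$-position as $\min\{\binom{t+2}{2},n\}$) stabilizes at $n$, then add one. The only difference is that you re-derive the link between this stabilization degree and the regularity via the Grothendieck--Serre local cohomology formula, where the paper simply cites \cite[Corollary 4.8]{EisSyz}.
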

\begin{proof} 
	By Proposition~\ref{GO2} (a), one can write $n={s+1\choose 2}+h$, with $0\leq h\leq s.$

Since $R/I$ is a finitely generated graded Cohen-Macaulay $R$-module (of dimension $1$), then  the smallest index $r$ such that $H_{R/I}(t)=n$ for all $t\geq r$ is  $$r=1-\depth R/I+\reg R/I=\reg R/I$$ (see for example, \cite[Corollary 4.8]{EisSyz}). On the other hand, since $X$ is a set of points in $n$-generic position in $\pp^2$ then
\begin{eqnarray}
H_{R/I}(t)&=&{t+2\choose 2}-\max\left\{0,{t+2\choose 2}-n\right\}\nonumber\\
&=&\left\{\begin{array}{cc}{t+2\choose 2}&\mbox{if } t\leq s-1\\ n&\mbox{if } t\geq s.\end{array}\right.
\end{eqnarray}
Putting together, one has  $\reg R/I=s-1$ if $h=0$ and $\reg R/I=s$ if $h\neq 0.$ Hence, $\reg I=(s-1)+1=s$ if $h=0$ and $\reg I=s+1$ if $h\neq 0.$
\end{proof}

\subsection{Points in tight generic position}

Let $\mathfrak{G}_n\subset (\pp^2)^n$ as before denote the set of the (tuples of) points in generic $n$-position.
The following result, conjectured in \cite{LRoberts}, was obtained as a consequence of  \cite[Theorem 2.6]{GM}, and later extended to arbitrary dimensions in \cite{TrVa1989}.

\begin{Theorem}\label{tight-is-open} Let  $\mathfrak{TG}_n\subset\mathfrak{G}_n\subset (\pp^2)^n$ denote the subset of the  points whose corresponding reduced ideals $I\subset R$ satisfy $\dim_k(R_1 I_s)=\min\{3\dim_k I_s,\dim_k I_{s+1}\}$, with $s={\rm indeg}(I)$.
Then, $\mathfrak{TG}_n$ contains a non-empty open subset.
\end{Theorem}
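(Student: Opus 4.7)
\demo
The plan splits into two steps: openness of the condition defining $\mathfrak{TG}_n$ inside $\mathfrak{G}_n$, and the exhibition of at least one configuration where the condition holds.

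For openness, I would first note that since $\mathfrak{G}_n$ is open and carries a single generic Hilbert function, the initial degree $s={\rm indeg}(I(X))$ together with $\dim_k I(X)_s$ and $\dim_k I(X)_{s+1}$ is constant on $\mathfrak{G}_n$. Hence the right-hand side $\min\{3\dim_k I_s,\dim_k I_{s+1}\}$ of the defining equality of $\mathfrak{TG}_n$ is constant on $\mathfrak{G}_n$, while the left-hand side $\dim_k R_1 I(X)_s$ is the rank of the multiplication map $\mu_X\colon R_1\otimes_k I(X)_s\to R_{s+1}$. Locally on $\mathfrak{G}_n$ one chooses a basis of $I(X)_s$ varying algebraically with $X$ (for example by realizing $I(X)_s$ as the kernel of the evaluation map $R_s\to k^n$ and applying Cramer's rule to a suitable maximal minor), so $\mu_X$ is represented by a matrix whose entries are polynomial in the coordinates of $X$. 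Its rank is therefore lower semicontinuous, and the locus where it attains the (constant) maximum $\min\{3\dim_k I_s,\dim_k I_{s+1}\}$ is Zariski open in $\mathfrak{G}_n$, and hence open in $(\pp^2)^n$.

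For non-emptiness, it suffices to produce a single $X\in\mathfrak{G}_n$ at which $\mu_X$ achieves maximal rank. My preferred strategy is a Horace-style induction on $n$: given $X'\in\mathfrak{TG}_{n-1}$, adjoin a further generic point $p$ to form $X=X'\cup\{p\}$, and verify that for $p$ outside a proper closed subset both $X\in\mathfrak{G}_n$ is retained and $\mu_X$ keeps maximal rank. Adjoining $p$ restricts the source of $\mu$ by the single linear condition ``vanish at $p$'' on $I(X')_s$ and quotients the target by the analogous condition on $R_{s+1}$, so a short rank computation on the associated evaluation maps yields the expected rank drop for generic $p$. An alternative is to distribute the $n$ points along a smooth curve $C\subset\pp^2$ of carefully chosen degree and deduce maximal rank of $\mu_X$ from the sequence $0\to I_C\to I_X\to I_{X/C}\to 0$ together with the well-understood multiplicative structure on $R/I_C$.

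The main obstacle is the non-emptiness step, and within it the transitional regime where $h$ is near $s/2$: there the min on the right-hand side switches between $3\dim_k I_s$ and $\dim_k I_{s+1}$, neither injectivity nor surjectivity of $\mu_X$ is forced by ambient dimension counts, and the inductive or curve-based argument must control both bounds simultaneously. This transitional analysis is precisely the content of \cite[Theorem 2.6]{GM}, and I would appeal to it to finish once the openness reduction above is in place.
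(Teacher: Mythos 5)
The paper offers no independent proof of this theorem: it is quoted outright as a consequence of \cite[Theorem 2.6]{GM} (with the extension to higher dimension in \cite{TrVa1989}), which is exactly where you ultimately place the substantive non-emptiness/maximal-rank content, your Horace-style sketch notwithstanding. Your preliminary semicontinuity argument for openness of the maximal-rank locus inside $\mathfrak{G}_n$ is correct but routine, so your proposal is essentially the same as the paper's treatment.
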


\begin{Definition}\rm A point (tuple) belonging to the subset  $\mathfrak{TG}_n$ in the above theorem will be said to be in {\it tight generic $n$-position}.	
\end{Definition}
We stress that this terminology refers to this very particular sort of point distribution, not to points chosen at random (although the latter would certainly fit).

The basic dimensions under the above stronger hypothesis are given as follows:

\begin{Proposition}\label{minimal_values}
	Let $I\subset R=k[x,y,z]$ be the reduced  ideal of a point  $X\in \mathfrak{TG}_n$.
	 Write $n={s+1\choose2}+h$ for some $0\leq h\leq s$, according to {\rm Proposition~\ref{GO2} (a)},  where $s={\rm indeg}(I)$.
	Then$:$
	\begin{enumerate}
		\item[{\rm (i)}] $\dim_k I_s=s-h+1$ and $\dim_k I_{s+1}=2s+3-h$.
		\item[{\rm (ii)}] $\dim_k(R_1 I_s)=\min\{3(s-h+1),2s+3-h\}=\left\{\begin{array}{cc}2s+3-h&\;\mbox{if }0\leq h\leq s/2\\3(s-h+1)&\,\mbox{if }s/2< h\leq s.\end{array}\right.$
	\end{enumerate}
\end{Proposition}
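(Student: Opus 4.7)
The plan is to derive both parts from the explicit Hilbert function of the reduced ideal of a generic-position configuration, combined with the defining equality for tight generic position.

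For a point $X \in \mathfrak{G}_n$, the inequality \eqref{bound_hilbert} is saturated, so
$$H_{R/I}(t) = \min\left\{\binom{t+2}{2},\; n\right\},\qquad t\geq 0.$$
By Proposition~\ref{GO2}(a), $s$ is characterized by $n<\binom{s+2}{2}$, hence $H_{R/I}(t)=n$ for every $t\geq s$. Therefore $\dim_k I_t = \binom{t+2}{2} - n$ for all $t \geq s$. Substituting $n=\binom{s+1}{2}+h$ and using the telescoping identities
$$\binom{s+2}{2}-\binom{s+1}{2}=s+1,\qquad \binom{s+3}{2}-\binom{s+1}{2}=2s+3,$$
yields $\dim_k I_s = s-h+1$ and $\dim_k I_{s+1}=2s+3-h$, which is part (i).

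For part (ii), the very definition of tight generic $n$-position states
$$\dim_k(R_1 I_s)=\min\{3\dim_k I_s,\,\dim_k I_{s+1}\},$$
so plugging in the two values obtained in (i) gives $\dim_k(R_1 I_s)=\min\{3(s-h+1),\,2s+3-h\}$. To identify the selected argument, I would compare the two quantities directly: the inequality $3(s-h+1)\leq 2s+3-h$ is equivalent to $s\leq 2h$, and reversing gives $2h\leq s$. Consequently the minimum equals $2s+3-h$ in the range $0\leq h\leq s/2$ and equals $3(s-h+1)$ in the range $s/2\leq h\leq s$; both expressions agree at $h=s/2$ (when $s$ is even), so the case split is unambiguous.

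There is essentially no obstacle: the proof reduces to substituting the known Hilbert function formula into three arithmetic identities and performing a linear inequality comparison. The only care point is verifying the boundary case $h=s/2$, which is immediate from the coincidence of the two candidate values there.
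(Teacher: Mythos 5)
Your proposal is correct and follows essentially the same route as the paper: part (i) comes from the generic-position formula $\dim_k I_t=\max\{0,\binom{t+2}{2}-n\}$ together with $n=\binom{s+1}{2}+h$, and part (ii) is just the defining equality of tight generic position applied to these values, with the elementary comparison $3(s-h+1)\leq 2s+3-h \Leftrightarrow s\leq 2h$ (which the paper leaves implicit) settling the case split.
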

\begin{proof} (i) Since $X$ is in generic $n$-position, then
$$\dim_k I_t= \max\left\{0,{t+2\choose 2}-n\right\}$$
for every $t\geq 0.$ In particular,

\begin{eqnarray}
\dim_k I_s&=& \max\left\{0,{s+2\choose 2}-n\right\}\nonumber\\
&=&\max\left\{0,{s+2\choose 2}-{s+1\choose 2}-h\right\}\nonumber\\
&=&\max\{0,s-h+1\}=s-h+1\nonumber
\end{eqnarray}
and
\begin{eqnarray}
\dim_k I_{s+1}&=& \max\left\{0,{s+3\choose 2}-n\right\}\nonumber\\
&=&\max\left\{0,{s+3\choose 2}-{s+1\choose 2}-h\right\}\nonumber\\
&=&\max\{0,2s+3-h\}=2s+3-h.\nonumber
\end{eqnarray}

(ii) Since $X$ is in {\em tight} generic $n$-position, this is an immediate consequence of (i).
\end{proof}

\begin{Theorem}\label{res-ideal-gen-points}
	Let $I\subset R=k[x,y,z]$ be the reduced  ideal of a point  $X\in \mathfrak{TG}_n$. Write $n={s+1\choose2}+h$ for some $0\leq h\leq s,$ where $s={\rm indeg}(I)\geq 2$. Then the minimal graded free resolution of $I$ is:
	\begin{equation}\label{first_half}
	0\to R(-s-1)^{s-2h}\oplus R(-s-2)^{h} \stackrel{\phi_h}\lar R(-s)^{s-h+1}\to I\to 0  
	\end{equation}\label{second_half}
	if  $0\leq h\leq s/2$ and
	\begin{equation}
	0\to R(-s-2)^{h} \stackrel{\phi_h}\lar  R(-s)^{s-h+1}\oplus R(-s-1)^{2h-s} \to I\to 0  
	\end{equation}
	if $s/2< h\leq s.$ 
\end{Theorem}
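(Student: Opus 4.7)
The plan is to determine the graded Betti numbers of $I$ via the two formulas of Lemma \ref{data_res}, and then assemble the resolution. Since $R/I$ is Cohen--Macaulay of codimension two, the resolution has length one, so it suffices to compute $\beta_{0,j}$ and $\beta_{1,j}$.

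First I would compute $\beta_{0,j}$ by Lemma \ref{data_res}(a). Since $s={\rm indeg}(I)$, one has $I_{s-1}=0$, so $\beta_{0,s}=\dim_k I_s=s-h+1$ by Proposition \ref{minimal_values}(i). For degree $s+1$,
$$\beta_{0,s+1}=\dim_k I_{s+1}-\dim_k(R_1 I_s),$$
and Proposition \ref{minimal_values}(ii) makes the two sectors appear: this quantity is $0$ when $0\leq h\leq s/2$ and equals $(2s+3-h)-3(s-h+1)=2h-s$ when $s/2<h\leq s$. Proposition \ref{GO2}(b) ensures $I$ is generated in degrees $s$ and $s+1$, so $\beta_{0,j}=0$ for all other $j$. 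This already pins down the shifts and ranks of $F_0$ in each sector.

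Next I would compute $\Delta^3 H_I$ and plug into Lemma \ref{data_res}(b). From generic $n$-position, $H_I(t)=\max\{0,\binom{t+2}{2}-n\}$, and a direct telescoping yields
$$\Delta^3 H_I(s)=s+1-h,\quad \Delta^3 H_I(s+1)=2h-s,\quad \Delta^3 H_I(s+2)=-h,$$
and $\Delta^3 H_I(j)=0$ for $j<s$ or $j>s+2$. The identity $\beta_{1,j}=\beta_{0,j}-\Delta^3 H_I(j)$ then gives $\beta_{1,s}=0$, $\beta_{1,s+1}=s-2h$ in the first sector and $0$ in the second, and $\beta_{1,s+2}=h$ in both. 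The regularity bound of Proposition \ref{regI} kills all other entries. Non-negativity of every $\beta$ produced above confirms that these are indeed the shifts of the minimal $F_1$, and the two resolutions in the statement follow by reading off the Betti table in each sector; one can cross-check by verifying $\sum_i a_i=\sum_j b_j$ for the shifts, which is forced by the Hilbert--Burch structure.

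The main obstacle I anticipate is housekeeping at the boundary $h=s/2$, where the two displayed resolutions must agree: the middle row $R(-s-1)^{s-2h}$ from sector one collapses to zero, and simultaneously the extra generator row $R(-s-1)^{2h-s}$ from sector two also vanishes, so both formats reduce to $R(-s-2)^h\to R(-s)^{s/2+1}$. The non-negativity of the Betti numbers computed from Lemma \ref{data_res}(b) together with the equalities in Proposition \ref{minimal_values}(ii) forces this compatibility, so no separate boundary argument is actually required. Beyond this bookkeeping, the proof is a Hilbert-series computation in the shadow of the Hilbert--Burch theorem.
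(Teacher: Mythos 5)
Your proposal is correct and follows essentially the same route as the paper: both prove the statement by computing the graded Betti numbers from Lemma~\ref{data_res}(a),(b) together with Proposition~\ref{minimal_values}, using the regularity bound of Proposition~\ref{regI} (you additionally invoke Proposition~\ref{GO2}(b), which is equivalent bookkeeping) to confine the shifts to degrees $s,s+1,s+2$. Your explicit values $\Delta^3H_I(s)=s+1-h$, $\Delta^3H_I(s+1)=2h-s$, $\Delta^3H_I(s+2)=-h$ agree with the paper's computation, so nothing further is needed.
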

\begin{proof} By Proposition~\ref{regI}, the regularity of $I$ is $s$ or $s+1$. Therefore, the minimal graded free resolution of $I$ has the following shape:
$$0\to R(-s-1)^{\beta_{1,s+1}}\oplus R(-s-2)^{\beta_{1,s+2}}\to R(-s)^{\beta_{0,s}}\oplus R(-s-1)^{\beta_{0,s+1}}\to I\to 0,$$
for certain integers $\beta_{i,j}$, which we proceed to determine.

By  Lemma~\ref{data_res}~(a) and by Proposition~\ref{minimal_values}, one has

\begin{equation*}
\beta_{0,s}=\dim_k (I_s)-\dim_k(R_1 I_{s-1})=\dim_k (I_s)=s-h+1
\end{equation*}
and
\begin{eqnarray*}
\beta_{0,s+1}&=&\dim_k (I_{s+1})-\dim_k(R_1 I_{s})\nonumber\\
&=&2s+3-h-\dim_k(R_1 I_{s})\nonumber\\
&=&\left\{\begin{array}{cc}0&\mbox{if }0\leq h\leq s/2\\
2h-s&\mbox{if }s/2< h\leq s.\end{array}\right. \quad (\mbox{by Proposition~\ref{minimal_values}})
\end{eqnarray*}

By Lemma~\ref{data_res}~(b), one has
\begin{eqnarray*}
\beta_{1,s+1}&=&\beta_{0,s+1}-H_I(s+1)+3H_I(s)-3H_I(s-1)+H_I(s-2)\nonumber\\
&=&\beta_{0,s+1}-H_I(s+1)+3H_I(s)\nonumber\\
&=&\left\{\begin{array}{cc}s-2h&\mbox{if }0\leq h\leq s/2\\
0&\mbox{if }s/2< h\leq s\end{array}\right.
\end{eqnarray*}

\begin{eqnarray*}
\beta_{1,s+2}&=&\beta_{0,s+2}-H_I(s+2)+3H_I(s+1)-3H_I(s)+H_I(s-1)\nonumber\\
&=&-H_I(s+2)+3H_I(s+1)-3H_I(s)\nonumber\\
&=&h.
\end{eqnarray*}
Carrying over these values, one retrieves the claimed resolutions. 
\end{proof}

\smallskip

Next we call upon the standing inequality for a set of $n$ points in generic $n$-position, as established in Proposition~\ref{GO2}, namely:  $n={s+1\choose 2}+h$, with $0\leq h\leq s,$ where $s={\rm indeg}(I)\geq 3$.
The next two subsections will emphasize the value of $h$ as it sweeps two sectors, under the assumption of tight generic position.

\subsection{Tight generic position: range $0\leq h\leq s/2$}

\begin{Proposition}\label{Ideal_Ger_Birational}
Let $I\subset R=k[x,y,z]$ be the reduced ideal of a point  $X\in \mathfrak{TG}_n$. Write $n={s+1\choose 2}+h$, with $0\leq h\leq s/2,$ where $s={\rm indeg}(I)\geq 3$.
	Then$:$
	\begin{enumerate}
		\item[{\rm (a)}] The ideal $I$ has maximal analytic spread $\ell(I)$.
		\item[{\rm(b)}] If moreover $h\leq s/2-1$, the rational map $\mathfrak{F}:\pp^2\dasharrow \pp^{s-h}$ defined by the linear  system $I_s$ is birational onto its image.
	\end{enumerate} 
\end{Proposition}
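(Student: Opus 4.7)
The starting observation is that the hypothesis $0 \leq h \leq s/2$ forces $I$ to be equigenerated in degree $s$. Indeed, Proposition~\ref{minimal_values}(ii) gives $\dim_k(R_1 I_s) = 2s + 3 - h = \dim_k I_{s+1}$, so the inclusion $R_1 I_s \subseteq I_{s+1}$ is an equality; Proposition~\ref{GO2}(b) then yields $I = (I_s, I_{s+1}) = (I_s)$. In particular $\mathcal{F}(I) = k[I_s]$ and $\ell(I) = \dim k[I_s]$.

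For (a), I plan to exhibit three algebraically independent forms in $I_s$ via a B\'ezout-style argument. Since $I$ is a radical ideal with zero-dimensional vanishing locus, no non-constant form divides every element of $I_s$, so a generic pair $f_1, f_2 \in I_s$ forms a regular sequence (and such a pair exists because $\mu(I) = s - h + 1 \geq 3$ for $s \geq 3$). B\'ezout gives $\mathrm{length}(R/(f_1,f_2)) = s^2$, exceeding the contribution from the $n$ base points by $s^2 - n = \binom{s}{2} - h > 0$ throughout the given range. A generic $f_3 \in I_s$ then fails to vanish at some excess intersection point, so $\mathrm{ht}(f_1, f_2, f_3) = 3$ in $R$; this forces $f_1, f_2, f_3$ to be algebraically independent over $k$ and yields $\ell(I) = 3$.

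For (b), the plan is to apply a Jacobian-dual inverse-map construction in the spirit of Simis--Ulrich--Vasconcelos. The Hilbert--Burch matrix $\phi_h$ of Theorem~\ref{res-ideal-gen-points} has $s - 2h \geq 2$ linear columns, and the strict bound $h \leq s/2 - 1$ is precisely what ensures at least two of them (note $s - 2h = 3\dim_k I_s - \dim_k I_{s+1}$ is the defect of the surjection $R_1 \otimes I_s \twoheadrightarrow I_{s+1}$). Writing each linear entry as $\ell_{ki} = \sum_j a_{kij} x_j$ and collecting coefficients of $x, y, z$, the linear syzygies $\sum_i \ell_{ki} f_i = 0$ rearrange into the matrix identity
\[
B \cdot [x, y, z]^t \equiv 0 \pmod{\mathcal{J}_\mathcal{R}},
\]
where $B$ is the $(s-2h) \times 3$ Jacobian dual with entries linear in the presentation variables $t_0, \ldots, t_{s-h}$ of $\mathcal{R}_R(I)$, and $\mathcal{J}_\mathcal{R}$ is its defining ideal. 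Since $\mathcal{R}_R(I)$ is a domain and $(x,y,z)^t \neq 0$, the matrix $B$ has generic rank at most $2$, so all its $3 \times 3$ minors --- which live in $k[t_0, \ldots, t_{s-h}]$ --- lie in the defining ideal $\mathcal{J}_\mathcal{F}$ of the special fiber $\mathcal{F}(I)$.

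Birationality then reduces to the \emph{exact} rank assertion $\mathrm{rank}(B \bmod \mathcal{J}_\mathcal{F}) = 2$: once some $2 \times 2$ minor of $B$ is non-zero in $\mathcal{F}(I)$, Cramer's rule represents $(x : y : z)$ as ratios of such minors evaluated at $(f_0, \ldots, f_{s-h})$, producing an explicit rational inverse $\mathfrak{G} \colon \pp^{s-h} \dasharrow \pp^2$. The main obstacle is ruling out rank drop to $1$ modulo $\mathcal{J}_\mathcal{F}$; I expect this to follow from a dimension analysis of the bigraded component $\mathcal{R}_R(I)_{(1,1)}$, exploiting the $k$-independence of the minimal linear syzygy columns of $\phi_h^L$ together with a count of $\dim_k \mathcal{F}(I)_2$ forced by the tight generic $n$-position hypothesis. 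The strict bound $h \leq s/2 - 1$ is essential here: at the boundary $h = s/2$ (which requires $s$ even) the matrix $\phi_h$ has no linear columns at all, leaving no Jacobian dual to work with --- consistent with birationality not being claimed in that case.
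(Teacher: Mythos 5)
Your argument for (a) has a genuine gap at the step ``$\Ht(f_1,f_2,f_3)=3$''. Every form in $I_s$ vanishes on the nonempty set $X$, so any three elements of $I_s$ generate an ideal contained in $I$, whose height is therefore at most $\Ht I=2$; height $3$ would mean $V(f_1,f_2,f_3)=\emptyset$, which is impossible. Moreover the excess of length $\binom{s}{2}-h$ in $V(f_1,f_2)$ need not be supported off $X$ at all: it can sit entirely as extra multiplicity at the points of $X$, so a generic $f_3\in I_s$ may have no ``excess point'' at which to fail to vanish. Hence algebraic independence of $f_1,f_2,f_3$ does not follow from this route. (Your preliminary observation that $I=(I_s)$ in the range $0\leq h\leq s/2$ is correct.) The paper's argument is different and shorter: $\mu(I)=s-h+1\geq 3$, and since $I$ is radical and generically a complete intersection, $\ell(I)=\Ht I=2$ would force $I$ to be a complete intersection by Cowsik--Nori \cite{Cow-Nor}, a contradiction; hence $\ell(I)=3$.

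For (b) you identify the same strategy as the paper (the rank-two Jacobian dual criterion of \cite[Theorem 2.20]{AHA}, applied to the $(s-2h)\times 3$ matrix of $t$-linear forms coming from the linear syzygies), but the decisive step --- ruling out a rank drop below $2$ modulo the defining ideal $\mathcal{J}_{\mathcal{F}}$ of the special fiber --- is only ``expected'', so the proposal is a plan rather than a proof. The paper closes exactly this gap in two moves: first, since $I$ satisfies $G_3$ and $X$ is in tight position, the defining ideal of $\mathcal{F}(I)$ has initial degree at least $3$ (\cite[Corollary 1.3]{TightRS2022}), so it suffices that some $2\times 2$ minor (a quadric in the $t$'s) of the $2\times 3$ Jacobian submatrix $\Theta$ built from two linear syzygies be nonzero in $k[t_1,\ldots,t_{s-h+1}]$; second, $\rank\Theta=2$ over $k[t]$ is proved by showing that the symmetric algebra of the module presented by the two linear syzygy columns is a domain via Huneke's criterion \cite[Theorem 1.1]{Hu} (using $I\subset I_2(\phi_1)$ and $\Ht I_1(\phi_1)=3$), and then invoking the rank formula of \cite[Proposition 1.1]{JacDual} for $\coker\Theta^t$. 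Your suggested ``count of $\dim_k\mathcal{F}(I)_2$'' is in substance the statement that the fiber ideal contains no quadrics, which is what \cite[Corollary 1.3]{TightRS2022} supplies, but even granting it you would still need the rank-two statement over $k[t]$; without these two ingredients the birationality claim is not established.
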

\begin{proof}
(a)  Given that $s\geq 3$, the number of generators of $I$ is $s-h+1\geq 3$. Since $I$ is a radical ideal, it cannot be equimultiple according to the result of Dade and Cowsik--Nori (independently) (see \cite{Cow-Nor}).

(b)
The assumption $h\leq s/2-1$ means that $s-2h\geq 2$, hence the resolution (\ref{first_half}) in Theorem~\ref{res-ideal-gen-points} implies that $I$ has at least two independent linear syzygies. 
Let $\phi_1$ denote the $2\times (s-h+1)$ matrix whose columns are these two linear syzygies and let $\mathcal{R}_R(I)\simeq R[t_1,\ldots,t_{s-h+1}]/\mathcal{J}$ be a bihomogeneous polynomial presentation of the Rees algebra of $I.$ Let $\{f,g\}\subset \mathcal{J}$ denote the two forms of bidegree $(1,1)$ induced by the two linear syzygies. The Jacobian matrix $\Theta$ of $f$ and $g$ with respect to the variables $x,y,z$ is a $2\times 3$ submatrix of the Jacobian dual matrix of $I.$  
In order to deduce that the map is birational onto the image it is enough, according to  \cite[Theorem 2.20]{AHA}, to prove that the rank of $\Theta$ module the defining ideal of the special fiber $\mathcal{F}(I)$ is $2$. But, since $I$ satisfies the $G_3$ condition the  initial degree of the special fiber of $I$ is at least $3$ (see \cite[Corollary 1.3]{TightRS2022}). Thus, to conclude the proof we need to show that the rank of $\Theta$ is $2$ over $S:=k[t_1,\ldots,t_{s-h+1}].$ For this, let  $E:=\Image \phi_1.$ Then, $E$ is an $R$-module of projective dimension $1$ with free resolution 
$$0\to R^2\stackrel{\phi_1}\lar R^{s-h+1}\to E\to 0.$$

\noindent{\sc Claim:} The symmetric algebra $\mathcal{S}_R(E)$ of $E$ is a domain.

Since $E$ is an $R$-module of projective dimension $1$ then, in order to proof the claim it is enough, by  \cite[Theorem 1.1]{Hu}, to show that $\Ht I_2(\phi_1)\geq 2$ and $\Ht I_1(\phi_1)\geq 3$.

Note that $\Ht I_2(\phi_1)\geq 2$ because $I\subset I_2(\phi_1).$ As for $\Ht I_1(\phi_1),$ suppose otherwise. Then $I_1(\phi_1)=p$ for some minimal prime $p$ of $I.$ Thus,  $I\subset I_2(\phi_1)\subset p^2.$ Hence, $I_p=p\subset p_p^2.$ But, this an absurd. Therefore $\Ht I_1(\phi_1)=3$, thus concluding the proof of the claim.

Set $M:=\coker(\Theta^t).$
By the equality $$[\begin{matrix}t_1 &\cdots&t_{s-h+1}\end{matrix}]\phi_1=[\begin{matrix}x&y&z\end{matrix}]\Theta^t,$$
$M$ is naturally a module over the polynomial ring $S=k[t_1,\ldots, t_{s-h+1}]$ and we have the usual isomorphism of $R\otimes_k S=k[x,y,z,t_1,\ldots, t_{s-h+1}]$-modules between the respective symmetric algebras 
$\mathcal{S}_R(E)$ and $\mathcal{S}_S(M).$ 
In addition, since  $\mathcal{S}_R(E)$ is a domain, then $\rank M+ \mu(E)=\rank E+\mu(M)$ (\cite[Proposition 1.1]{JacDual}.
Since $\rank E=s-h+1$, $\mu(E)=s-h+1$ and $\mu(M)=3$, then $\rank M=1$.
That is, $\rank \Theta=3-1=2$, as was to be shown.
\end{proof}

\begin{Remark}\rm 
	Among the cases  left out in (b), one finds $(s,h)=(3,1)$ and $(s,h)=(4,2)$. Now in both cases, by Theorem~\ref{res-ideal-gen-points},  the ideal $I$ is $3$-generated.
	Since it is generically a complete intersection then it is of linear type.
	Looking at the respective free resolutions, one sees that the linear rank of $I$ is at most $1$. 
	It follows from \cite[Proposition 3.4]{AHA} that the rational map  $\mathfrak{F}$ is not birational (i.e., not Cremona).
\end{Remark}
	
	

Again, in the presence of the strict inequality $h \leq s/2-1$, one can say a bit more.

\begin{Theorem}\label{strict_inequality}
	Let $I \subset R = k[x,y,z]$ be a reduced ideal of a point $X \in \mathfrak{TG}_n$ with $\mu(I) \geq 4$. If $n = {s+1 \choose 2} +h$, with $0 \leq h \leq s/2-1,$ the following are equivalent$:$
	\begin{enumerate}
		\item[\rm(a)]  The Rees algebra $\mathcal{R}_R(I)$ is Cohen-Macaulay.
		\item[\rm(b)] The special fiber $\mathcal{F}(I)$ is Cohen-Macaulay and the minimal graded free resolution of $\mathcal{F}(I)$ over $S=k[t_1,\ldots,t_{\mu(I)}]$ has the form
		$$0 \rightarrow S(-\mu(I)+1)^{\beta_{\mu(I)-3}} \rightarrow \cdots \rightarrow S(-3)^{\beta_1} \rightarrow S \rightarrow \mathcal{F}(I) \rightarrow 0$$
		\item[\rm(c)] $I$ is is linearly presented $($that is, $h=0$$).$
	\end{enumerate}
\end{Theorem}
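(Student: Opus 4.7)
My plan is to establish the three-way equivalence via the cycle $(c) \Rightarrow (a)$, $(c) \Rightarrow (b)$, $(b) \Rightarrow (c)$, and $(a) \Rightarrow (c)$. Throughout, the essential input is Proposition~\ref{Ideal_Ger_Birational}: the analytic spread is maximal ($\ell(I) = 3$), and since $\mu(I) \geq 4$ forces $s \geq 3$ in the given range, the rational map $\mathfrak{F}\colon \pp^2 \dasharrow \pp^{\mu(I)-1}$ defined by the generators of $I$ is birational onto its image.

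For $(c) \Rightarrow (b)$ and $(c) \Rightarrow (a)$, when $h = 0$, Theorem~\ref{res-ideal-gen-points} provides the linear presentation $0 \to R(-s-1)^s \stackrel{\phi_0}{\to} R(-s)^{s+1} \to I \to 0$. The Jacobian dual matrix $B(\phi_0)$ is then of size $3 \times s$ with linear entries in the $t_i$. By the Jacobian dual criterion for birational maps of linearly presented ideals (as used in \cite{AHA}), the defining ideal of $\mathcal{F}(I)$ equals $I_3(B(\phi_0))$. Since $\dim \mathcal{F}(I) = 3$, this ideal has codimension $s - 2 = \mu(I) - 3$, matching the generic codimension, so the Eagon--Northcott complex minimally resolves it and gives exactly the pure linear resolution in (b). A standard bigraded mapping cone joining this Eagon--Northcott complex with the linear relations coming from $\phi_0$ yields a bigraded minimal free $S$-resolution of $\mathcal{R}_R(I)$ of length equal to $\mathrm{codim}_S\, \mathcal{R}_R(I)$, establishing (a).

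For $(b) \Rightarrow (c)$, the Herzog--K\"uhl formula applied to the pure linear resolution in (b) computes the multiplicity $e(\mathcal{F}(I)) = \binom{\mu(I)-1}{2} = \binom{s-h}{2}$. On the other hand, by birationality of $\mathfrak{F}$, this multiplicity equals the degree of the image in $\pp^{\mu(I)-1}$, which by Noether's base-point formula with $n$ simple base points is $s^2 - n = \binom{s}{2} - h$. The equation $\binom{s-h}{2} = \binom{s}{2} - h$ simplifies to $h(2s - h - 3) = 0$; in the range $0 \leq h \leq s/2 - 1$ with $s \geq 3$, the only admissible solution is $h = 0$.

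For $(a) \Rightarrow (c)$, assuming $\mathcal{R}_R(I)$ is Cohen--Macaulay, combine the equigeneration of $I$, the $G_3$ property, and maximal analytic spread with a standard regularity estimate: Cohen--Macaulayness of the Rees algebra controls the asymptotic regularity $\reg(I^n)$ and, in particular, forces $\reg(I) = s = {\rm indeg}(I)$, i.e., $I$ has a linear resolution. Proposition~\ref{regI} then yields $n = \binom{s+1}{2}$, equivalently $h = 0$. This last implication is the main obstacle, since deducing linearity of the presentation of $I$ directly from Cohen--Macaulayness of $\mathcal{R}_R(I)$ requires a careful regularity bound tying global invariants of the Rees algebra to the syzygy degrees of $I$; a possibly cleaner alternative would be to first prove $(a) \Rightarrow (b)$ by showing that CM of $\mathcal{R}_R(I)$ forces the fiber to inherit the pure linear resolution shape, and then invoke $(b) \Rightarrow (c)$.
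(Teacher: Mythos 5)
Your cycle $(c)\Rightarrow(a)$, $(c)\Rightarrow(b)$, $(b)\Rightarrow(c)$, $(a)\Rightarrow(c)$ is logically adequate, and your $(b)\Rightarrow(c)$ is essentially the paper's argument: the pure resolution gives $e(\mathcal{F}(I))=\binom{\mu(I)-1}{2}$ (the paper cites Huneke--Miller \cite{HM}), birationality from Proposition~\ref{Ideal_Ger_Birational}(b) gives $\deg\mathfrak{F}=1$, and the degree count $s^2-n$ (the paper uses the algebraic version \cite[Theorem 6.6(b)]{Ram2} rather than a geometric base-point formula, which is safer since it handles the local intersection multiplicities via the generic complete intersection property) yields $h(2s-h-3)=0$, hence $h=0$. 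For $(c)\Rightarrow(a)$ the paper simply cites \cite[Theorem 1.3]{MorUl1996}; your mapping-cone sketch is trying to re-derive that theorem and, as written, does not constitute a proof (a mapping cone is neither automatically minimal nor does its length alone give Cohen--Macaulayness without further depth arguments), but this is repairable by citation.

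The genuine gap is $(a)\Rightarrow(c)$. You assert that Cohen--Macaulayness of $\mathcal{R}_R(I)$, via ``a standard regularity estimate,'' forces $\reg(I)=s$, i.e.\ that $I$ has a linear resolution. No such standard implication exists, and no argument is given; note that by Proposition~\ref{regI} the statement $\reg(I)=s$ is literally equivalent to $h=0$, so this step assumes exactly what must be proved. The paper's route (your suggested fallback $(a)\Rightarrow(b)$) is where the real work lies, and it goes through the fiber, not through $\reg(I)$: since $I$ satisfies $G_3$, the defining ideal $Q$ of $\mathcal{F}(I)$ has initial degree $\geq 3$ by \cite[Corollary 1.3]{TightRS2022}; since $\ell(I)=3$ and $\mu(I)\geq 4$, $\mathcal{F}(I)$ is Cohen--Macaulay by \cite[Theorem 3.3]{TightRS2022}; Cohen--Macaulayness of $\mathcal{R}_R(I)$ gives $r(I)\leq \ell(I)-1=2$ by \cite[Theorem 5.6]{AHT1995}, and Cohen--Macaulayness of the fiber gives $\reg(\mathcal{F}(I))=r(I)$ by \cite[Proposition 1.2]{GST}; combining, $\reg(\mathcal{F}(I))=2$ with $\beg(Q)\geq 3$, so $Q$ is generated in degree $3$ and the resolution has the pure shape in (b), after which $(b)\Rightarrow(c)$ concludes. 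Without this reduction-number/fiber-regularity input (or some substitute), your $(a)\Rightarrow(c)$ does not go through.
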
 
\begin{proof}(a)$\Rightarrow$(b) Since $I$ is an ideal of reduced points, it satisfies condition $G_3$. Thus, it follows by \cite[Corollary 1.3]{TightRS2022}  that ${\rm indeg}(Q)\geq 3$. In addition, as $\ell(I)=3$ (by Proposition \ref{Ideal_Ger_Birational} (a)) and $\mu(I)\geq 4$ then by  \cite[Theorem 3.3]{TightRS2022} the special fiber $\mathcal{F}(I)$ is Cohen-Macaulay. 
	
	Since $\mathcal{R}(I)$ is Cohen-Macaulay, then  \cite[Theorem 5.6]{AHT1995} yields $r(I) \leq \ell(I)-1= 2$, where $r(I)$ denotes the minimal reduction number of $I$. Since $\mathcal{F}(I)$ is Cohen-Macaulay,  it follows by \cite[Proposition 1.2]{GST}  that ${\rm reg}(\mathcal{F}(I)) = r(I)$. Thus, ${\rm reg}(\mathcal{F}(I)) \leq 2.$ On the other hand, since ${\rm indeg}(Q)\geq 3$  we have $\reg(\mathcal{F}(I)) \geq 2.$ Therefore,  $\reg(\mathcal{F}(I)) =2$  and the defining ideal of $\mathcal{F}(I)$ is generated in degree $3.$ In particular  the minimal graded free resolution of $\mathcal{F}(I)$ has the form
	$$0 \rightarrow S(-\mu(I)+1)^{\beta_{\mu(I)-3}} \rightarrow \cdots \rightarrow S(-3)^{\beta_1} \rightarrow S \rightarrow \mathcal{F}(I) \rightarrow 0.$$
	
	
	(b)$\Rightarrow$(c) By the assumed minimal graded free resolution of $\mathcal{F}(I)$ and  \cite[Theorem 1.2]{HM}, one has $e(\mathcal{F}(I)) = {\mu(I)-1 \choose 2}.$ By Theorem~\ref{res-ideal-gen-points}, $\mu(I)=s-h+1.$ Thus,
	\begin{equation}\label{graudafibra}
		e(\mathcal{F}(I))= \frac{s^2-2sh-s+h^2+h}{2}.
	\end{equation}
	
	Since $\ell(I)=3 \neq 2 = \Ht(I)$ and $I$ is generically a complete intersection, then by \cite[Theorem 6.6 (b)]{Ram2} we have
	\begin{equation}\label{BirReducedEquigWithFiberCM}
		e(\mathcal{F}(I)) \cdot \deg(\mathfrak{F})  = s^2 - e(R/I)=\frac{s^2-s-2h}{2},
	\end{equation}
	where $\deg(\mathfrak{F})$ is the degree of the rational map $\mathfrak{F}:\pp^2\dasharrow\pp^{\mu(I)-1}$ defined by the linear system $I_s.$ But, by Proposition~\ref{Ideal_Ger_Birational}, $\deg(\mathfrak{F})=1.$ Hence, by \eqref{graudafibra} and \eqref{BirReducedEquigWithFiberCM} we get 
	\begin{equation*}
		\frac{s^2-2sh-s+h^2+h}{2} = \frac{s^2-s-2h}{2},
	\end{equation*}
thus implying that $h(-2s+h+3)=0$. Supposing that $h \neq 0$, we would have  $-2s+h+3=0$, that is, $2s-3=h < s/2$, hence $s < 2$, which contradicts $\mu(I)=s-h+1\geq 3$. Thus, $h=0$ and, therefore $I$ is a linearly presented ideal.

	(c)$\Rightarrow$(a) Since $I$ is a  perfect ideal of codimension 2  that is generically complete intersection then $I$ satisfies the $G_3$ condition. Hence, by \cite[Theorem 1.3]{MorUl1996} the Rees algebra $\mathcal{R}(I)$ is Cohen-Macaulay.
\end{proof}

\subsection{Tight generic position: range $s/2 < h\leq s$}\label{second_range}

The following general result wraps up various situations considered in \cite[Section 4]{GO2}.

\begin{Proposition}\label{irreducible_everywhere}
	Let $I\subset R=k[x,y,z]$ be the reduced ideal of a point  $X\in \mathfrak{TG}_n$.  Let $n={s+1\choose 2}+h$, with $s/2 < h\leq s,$ where $s={\rm indeg}(I)\geq 2$. If $X$ is in addition in uniform $n$-position then any element of the linear system $I_s$  is irreducible. In particular, the ideal $(I_s)$ has height $2$.
\end{Proposition}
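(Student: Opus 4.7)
The plan is to establish the irreducibility claim by contradiction and then deduce the height assertion by a short linear-algebra argument. Suppose, toward a contradiction, that some nonzero $F\in I_s$ factors nontrivially as $F=GH$, with $a:=\deg G\geq 1$ and $b:=\deg H\geq 1$; necessarily $a+b=s$. Since each point $p_i\in X$ satisfies $F(p_i)=0$, either $G(p_i)=0$ or $H(p_i)=0$, so setting $X_G:=\{p\in X:G(p)=0\}$ and $X_H:=\{p\in X:H(p)=0\}$ yields $X=X_G\cup X_H$, whence $n\leq m_1+m_2$ with $m_1:=|X_G|$ and $m_2:=|X_H|$.

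The uniform $n$-position hypothesis enters next: by definition $X_G$ and $X_H$ lie respectively in generic $m_1$- and $m_2$-position, so
$$\dim_k I(X_G)_a=\max\left\{0,\binom{a+2}{2}-m_1\right\},\qquad \dim_k I(X_H)_b=\max\left\{0,\binom{b+2}{2}-m_2\right\}.$$
The nonzero forms $G\in I(X_G)_a$ and $H\in I(X_H)_b$ force $m_1\leq\binom{a+2}{2}-1$ and $m_2\leq\binom{b+2}{2}-1$, so
$$n\;\leq\;\binom{a+2}{2}+\binom{b+2}{2}-2\;=\;\frac{a^2+b^2+3s}{2}.$$
Writing $a^2+b^2=s^2-2ab$ and using $ab\geq s-1$ (since $a,b\geq 1$ with $a+b=s$), the right-hand side is maximized at $(a,b)=(1,s-1)$, where its value is $\binom{s+1}{2}+1$. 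Hence $n\leq\binom{s+1}{2}+1$, i.e., $h\leq 1$. But $h>s/2$ together with $s\geq 2$ forces $h\geq 2$, the desired contradiction; so every element of $I_s$ is irreducible.

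For the height statement, invoke Proposition~\ref{minimal_values}(i): in the pertinent range $h\leq s-1$ one has $\dim_k I_s=s-h+1\geq 2$. Pick $k$-linearly independent $F_1,F_2\in I_s$. By the irreducibility just proved, each $F_i$ is an irreducible form of degree $s$, and their linear independence prevents $k$-proportionality, so $F_1$ and $F_2$ are coprime in the UFD $R$. Two coprime forms of positive degree form a regular sequence, so $(F_1,F_2)\subseteq (I_s)$ has height $2$; as $(I_s)\subseteq I$ already has height at most $2$, equality follows.

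The principal obstacle is spotting that uniform position (not merely generic $n$-position) is exactly the hypothesis needed to control the sizes of the sub-configurations $X_G$ and $X_H$ in the second step; once it is deployed, the bound $n\leq\binom{s+1}{2}+1$ falls out of the elementary optimization above, and the height conclusion is automatic from unique factorization.
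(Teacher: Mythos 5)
Your proof is correct and takes essentially the same route as the paper's: split $X$ along the two factors, use uniform position to place each sub-configuration in generic position and bound its cardinality by $\binom{a+2}{2}-1$ and $\binom{b+2}{2}-1$, and contradict $h>s/2$ --- your optimization via $ab\ge s-1$ is merely sharper bookkeeping than the paper's bound $s_1s_2\ge s/2$ --- while the height claim is obtained in both proofs from two non-proportional, hence coprime, irreducible $s$-forms. The only remark worth making is that your explicit restriction to $h\le s-1$ in the height argument is actually more careful than the paper: for $h=s$ one has $\dim_k I_s=1$, so $(I_s)$ is principal of height $1$, and the ``in particular'' clause (and the paper's own argument for it) tacitly requires two non-proportional forms in $I_s$, i.e.\ $h\le s-1$.
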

\begin{proof} Let $f\in I_s$, $f\neq 0$. Suppose that $f=f_1f_2$ with $0<s_i:=\deg f_i\leq s-1$ for $i=1,2.$ Since $I$ is radical, we can assume $\gcd (f_1,f_2)=1.$ 
	
	Clearly, $X\subset V(f)=V(f_1)\cup V(f_2).$ In particular, $X=(X\cap V(f_1))\cup(X\cap V(f_2)).$ Let $m_i$ denote the number of elements of $X\cap V(f_i)$, for $i=1,2.$ 
	
	Since  $f_i\in I(X\cap V(f_i))$ then ${\rm indeg \,}I(X\cap V(f_i))\geq s_i$ and since $X$ is in uniform position, it follows from  Proposition~\ref{GO2} that $m_i\leq {{s_i+2}\choose 2}-1.$ Thus,
	\begin{eqnarray}
		n&=& m_1+m_2-\#(X\cap V(f_1)\cap V(f_2))\nonumber\\
		&\leq& m_1+m_2 \nonumber\\
		& \leq&{s_1+2\choose 2}+ {s_2+2\choose 2}-2\nonumber\\
		&=&\frac{s_1^2+s_2^2+3(s_1+s_2)+4}{2}-2\nonumber\\
		&=&\frac{s^2+3s-2s_1s_2+4}{2}-2 \quad\quad(\mbox{because } s_1+s_2=s)\nonumber\\
		&=&{s+1\choose 2}+s-s_1s_2.\label{estimativas1s2}
	\end{eqnarray}
	Since $s_1+s_2=s$ then $s_1\geq s/2$ or $s_2\geq s/2.$ Hence, $s_1s_2\geq s/2.$  In particular, $s-s_1s_2\leq s/2<h.$ Thus,  it follows from \eqref{estimativas1s2}  that 
	$$n<{s+1\choose 2}+h.$$
	But, this is an absurd because $n= {s+1\choose 2}+h$. 
	
	To see the supplementary statement, if $\Ht (I_s)=1$, since $R$ is an UFD, every two $s$-forms in $I$ have a proper common factor, which contradicts the main statement.
\end{proof}

\smallskip

Although the full hypotheses of the proposition look a bit over-sized, the following example shows that the additional assumption of uniform position is in general essential in order that the ideal $(I_s)$ should  have maximum height.

\begin{Example}\label{redone}\rm
	Let $I\subset k[x,y,z]$ be the ideal generated by the $2$-minors of the matrix
	$$\left[\begin{array}{cc}
		3yz+3z^2 &  -y^2+z^2\\
		y^2-4yz   &   x^2-y^2\\
		-x       &      0
	\end{array}\right].$$
	A computer calculation yields that $I$ is the reduced ideal of a point  $X\in \mathfrak{TG}_8$, with $s=3, h=s-1=2$, hence $n=8$. Note that this information is compatible with the data in Proposition~\ref{minimal_values}.
	Obviously, $(I_3)\subset (x)$, so both the main and the supplementary statements fail.
	Alas, $X$  is not in uniform position as there are subsets with more than $2$ points lying on a straight line.
\end{Example}

Let $\mathfrak{L}$ denote the $(2h-s)\times h$ submatrix of the syzygy matrix of $I$ whose entries are of degree $1$.
Then the ideal of minors $I_{2h-s}(\mathfrak{L})$ has height at most $h-(2h-s)+1=s-h+1$. But as we are in dimension $3$, attaining this upper bound implies $h=s-2$.
In this regard, one has:

\begin{Theorem}\label{main-sector2}
	Let $I\subset R= k[x,y,z]$ denote the reduced ideal of a point  $X\in \mathfrak{TG}_n$. Write $n={s+1\choose 2}+h$, with $h=s-2,$ where $s={\rm indeg}(I)\geq 5$. Let $J\subset I$ denote the ideal generated by the linear system $I_s.$  Then$:$
	\begin{enumerate}
		\item[{\rm(a)}] If $X$ is in addition in uniform $n$-position, the following are equivalent:
		\begin{itemize}
			\item[{\rm (i)}] $\dim I/J=0$.
			\item[{\rm (ii)}] The ideal of minors  $I_{s-4}(\mathfrak{L})$ has codimension $3$.
			\item[{\rm (iii)}] 	$J^{\rm sat}=I.$
		\end{itemize}
		\item[{\rm(b)}] 
		If any of the equivalent conditions in item {\rm (a)} is satisfied, the following hold$:$
		\begin{itemize}
			\item[{\rm (i)}] $J$ is an ideal of linear type.
			\item[{\rm (ii)}] The Rees algebra $\mathcal{R}_R(J)$ is Cohen--Macaulay.
			\item[{\rm (iii)}] The rational map $\mathfrak{F}: \pp^2 \dashrightarrow \pp^{\mu(J)-1}$ defined by $J$ is not birational onto its image.  
			\item[{\rm (iv)}] $deg(\mathfrak{F}) =  \frac{s^2-3s+4}{2}$.
		\end{itemize}
	\end{enumerate}
\end{Theorem}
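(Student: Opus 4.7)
The plan is to translate the present setup into the framework of Section~\ref{Section3} and then invoke the results established there. By Theorem~\ref{res-ideal-gen-points} applied with $h = s-2$ (which places us in the range $s/2 < h \leq s$ since $s \geq 5$), the minimal graded free resolution of $I$ has the shape
$$0 \to R(-s-2)^{s-2} \stackrel{\phi}{\lar} R(-s)^{3} \oplus R(-s-1)^{s-4} \to I \to 0.$$
Writing $\phi$ as a block matrix whose upper $3\times (s-2)$ block $\Phi_1$ has entries of degree $2$ and whose lower $(s-4)\times(s-2)$ block is $\Phi_2 = \mathfrak{L}$, we are in the setup of Section~\ref{Section3} with $n = s-1$, $a = d = 3$, $\epsilon_1 = 2$, $\epsilon_2 = 1$, and $D = s+2$; the ideal $J = (I_s)$ is then precisely the ideal of $(n-1)$-minors of $\phi$ fixing $\Phi_2$. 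The uniform position hypothesis in (a) ensures, via Proposition~\ref{irreducible_everywhere}, that $J$ has codimension $2$, hence $\dim R/J = 1$.

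Part (a) is then largely bookkeeping. The equivalence (ii)$\Leftrightarrow$(iii) is exactly Proposition~\ref{saturation_equivs}, whose hypothesis $a = d$ is met. For (i)$\Leftrightarrow$(iii), note that $I$ is saturated, being the ideal of a reduced set of points, so $J^{\rm sat} \subseteq I^{\rm sat} = I$; on the other hand $I/J \subseteq J^{\rm sat}/J = H^0_{\fm}(R/J)$, so $\dim I/J = 0$ is equivalent to $I \subseteq J^{\rm sat}$, i.e., to $J^{\rm sat} = I$.

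For part (b), item (i) follows directly from Theorem~\ref{main-thm}(c): with $a = d = 3$, $\epsilon_2 = 1$, and the equivalent conditions of Proposition~\ref{saturation_equivs} in force, it suffices to observe that $I$ satisfies $G_3$, which is automatic because $I$ is the radical ideal of reduced points in $\pp^2$ (hence a local complete intersection at every height-$2$ prime). Item (ii) follows from the very proof of that theorem: the specialization argument from a generic matrix combined with the Huneke--Rossi formula and \cite[Proposition~8.5]{Trento} yields $\mathcal{R}_R(J) = \mathcal{S}_R(J)$ and shows it to be Cohen--Macaulay.

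The degree formula closes the argument. Since $J$ is of linear type, its special fiber is $\mathcal{F}(J) = k[T_1,T_2,T_3]$, so $\ell(J) = 3$ and $\mathfrak{F}\colon\pp^2\dashrightarrow \pp^{\mu(J)-1} = \pp^2$ is dominant with image of degree $1$; moreover $J$ is generically a complete intersection because $J = I$ at every minimal prime. Applying \cite[Theorem~6.6(b)]{Ram2},
$$e(\mathcal{F}(J))\cdot\deg(\mathfrak{F}) = s^2 - e(R/J).$$
From condition (a)(i) and the exact sequence $0 \to I/J \to R/J \to R/I \to 0$ one obtains $e(R/J) = e(R/I) = n = \tfrac{s^2 + 3s - 4}{2}$, so $\deg(\mathfrak{F}) = \tfrac{s^2 - 3s + 4}{2}$, which is (iv). For $s \geq 5$ this integer is at least $7$, hence $\mathfrak{F}$ is not birational, giving (iii). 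The chief subtlety will be ensuring that the hypotheses of Theorem~\ref{main-thm}(c) and of the degree formula transfer cleanly to the present setting; the rest is unpacking Section~\ref{Section3}.
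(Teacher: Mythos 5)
Your proposal is correct, but it reaches the statement by a route that differs from the paper's at several points, essentially by channeling everything through the Section~\ref{Section3} machinery. For part (a) you invoke Proposition~\ref{saturation_equivs} (plus the saturation/finite-length equivalence), and for (b)(i)--(ii) you invoke Theorem~\ref{main-thm}(c) together with the fact that a reduced point ideal satisfies $G_3$; the paper instead argues directly, proving (a) via the radical equality between the annihilator of $I/J=\coker\mathfrak{L}$ and the Fitting ideal $I_{s-4}(\mathfrak{L})$ plus the adjugate trick for (ii)$\Rightarrow$(iii), and getting linear type from the fact that $J$ is an almost complete intersection that is generically a complete intersection, hence $G_\infty$ with Cohen--Macaulay symmetric algebra (\cite[Corollary 10.4, Proposition 8.5]{Trento}). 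The more substantive divergence is in (iii)--(iv): the paper first obtains the minimal free resolution of $R/J$ from Corollary~\ref{ResofJ} and Proposition~\ref{BRimparticularcase}, concludes non-birationality from the absence of linear syzygies via \cite[Proposition 3.4]{AHA}, and only then computes $e(R/J)$ from the Hilbert series of that resolution; you bypass the resolution entirely, computing $e(R/J)=e(R/I)=n=\frac{s^2+3s-4}{2}$ by additivity of multiplicity along $0\to I/J\to R/J\to R/I\to 0$ (using $\dim I/J=0$), deriving (iv) from \cite[Theorem 6.6(b)]{Ram2} with $e(\mathcal{F}(J))=1$, and then reading off (iii) since the degree is $\geq 7>1$ for $s\geq 5$. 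Your multiplicity computation is shorter and agrees with the paper's value; the trade-off is that your (iii) rests entirely on the applicability of the multiplicity formula ($\ell(J)=3\neq 2=\Ht J$, $J$ generically a complete intersection and equigenerated, which you do verify), whereas the paper's (iii) is independent of it. Two minor points worth a line in a full write-up: the identification of $J=(I_s)$ with the ideal of maximal minors of $\phi$ fixing $\mathfrak{L}$ uses Hilbert--Burch together with $\dim_k I_s=3$; and $\Ht J=2$ follows either from Proposition~\ref{irreducible_everywhere} under uniform position, as you say, or directly from $J^{\rm sat}=I$ in part (b).
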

\begin{proof}
	(a) By Theorem~\ref{res-ideal-gen-points} the minimal graded free resolution of $I$ is
	\begin{equation}
		0\to R(-s-2)^{s-2} \stackrel{\phi}\lar  R(-s)^{3}\oplus R(-s-1)^{s-4} \to I\to 0.  
	\end{equation}
	Let $f_1,f_2,f_3$ (respectively, $\{g_1,\ldots,g_{s-4}\}$) denote the minimal homogeneous generators of $I$ of degree $s$  (respectively, the minimal homogeneous generators of $I$ in degree $s+1$ not belonging to $R_1I_s$). In particular, $J=(f_1,f_2,f_3),$ an ideal of height $2$ by Proposition~\ref{irreducible_everywhere} (ii).

	Write the syzygy matrix $\phi$ as
	$$\phi=\left[\begin{array}{cc}
		\mathfrak{Q}\\ \textbf{}\mathfrak{L}
	\end{array}\right],$$
	where $\mathfrak{Q}$ is a $3\times (s-2)$ matrix whose entries are $2$-forms and $\mathfrak{L}$ is an $(s-4)\times (s-2)$ matrix whose entries are $1$-forms.
	
	(i) and (ii) are obviously equivalent since the annihilator of $I/J$ is such that $\Ht J:I=\Ht I_{s-4}(\mathfrak{L})$ because $I/J$ is presented by $\mathfrak{L}$, i. e., $I/J=\coker \mathfrak{L}$.
	
	By a similar token, the implication (iii) $\Rightarrow$ (i) holds.
	
	We now argue that (ii) $\Rightarrow$ (iii).

	Introducing a corresponding matrix notation
	$$\underline{f}=\left[\begin{array}{cccccccc}f_1&f_2&f_3\end{array}\right]\quad\mbox{and}\quad\underline{g}=\left[\begin{array}{cccccccc}g_1&\ldots&g_{s-4}\end{array}\right],$$
	one has
	$$\underline{g}\, \mathfrak{L}=-\underline{f}\, \mathfrak{Q}.$$
	Thus, $I_1(\underline{g}\, \mathfrak{L})=I_1(-\underline{f}\, \mathfrak{Q})\subset (f_1,f_2,f_3)=J.$ In particular, for an arbitrary $(s-4)\times (s-4)$ submatrix $\widetilde{\mathfrak{L}}$ of $\mathfrak{L}$ we have $I_1(\underline{g}\, \widetilde{\mathfrak{L}})\subset J.$ Hence,  by the adjugate relation
	$$(g_1\det \widetilde{\mathfrak{L}},\ldots,g_{s-4}\det \widetilde{\mathfrak{L}})=I_1(\underline{g}\, \widetilde{\mathfrak{L}}\,{\rm adj }\widetilde{\mathfrak{L}})\subset I_1(\underline{g}\, \widetilde{\mathfrak{L}})\subset J.$$
	Hence, the ideal product $I_{s-4}(\mathfrak{L})\, (g_1,\ldots,g_{s-4})$ is contained in $J$.
	It follows that
	\begin{equation}
		I_{s-4}(\mathfrak{L})\, I\subset J,
	\end{equation}
	thus implying that $I\subset J:(x,y,z)^{\infty}$.
	The reverse inclusion is obvious since $I$ is saturated.

	(b) Since $J$ is an almost complete intersection, the symmetric algebra $S_R(J)$ is Cohen-Macaulay (see, e.g., \cite[Corollary 10.4]{Trento}).
	On the other hand, by (a), the unmixed part of $J$ is the ideal $I$. The latter is generically a complete intersection since it is an ideal of reduced points. Therefore, $J$ is also generically a complete intersection and since it is $3$-generated, it satisfies property $(F_1)$ (or $G_{\infty}$).
	
	Combining the two results, it is well-known that $J$ is an ideal of linear type (see, e.g., \cite[Proposition 8.5]{Trento}).
	This shows (i), while (ii)  is an immediate consequence of (i) and its proof.

	(iii) Since $\Ht I_{s-4}(\mathfrak{L})=3$, then by Proposition~\ref{BRimparticularcase} and Corollary~\ref{ResofJ}  the minimal graded free resolution of  $R/J$ has the form
	
	\begin{equation}\label{resolution_J_three_gensOfDegrees}
		0\to R(-(2s-1))^{s-4}\lar R(-(2s-2))^{s-2}\lar R(-s)^3\to R \to R/J \to 0.
	\end{equation}
	Note that $J$ has no linear syzygies, therefore by \cite[Proposition 3.4]{AHA} it follows that the rational map  $\mathfrak{F}$ is not birational.
	
	(iv) By (\ref{resolution_J_three_gensOfDegrees}) we have that the Hilbert series of $R/J$ is given by
	\begin{equation*}
		H_{R/J}(t) = \frac{B_{R/J}(t)}{(1-t)^3}
	\end{equation*}
	where 
	\begin{equation}\label{NumeratorOfHilbertSeriesIn Dim3}
		B_{R/J}(t)=1-3t^s+(s-2)t^{2s-2}-(s-4)t^{2s-1}.
	\end{equation}
	By \cite[Corollary 7.4.12]{SimisBook} we have
	\begin{equation}\label{MultiplicityinSiBook}
		e(R/J) = \frac{1}{2}\frac{\partial^2 B_{R/J}(t)}{\partial t^2}(1).
	\end{equation}
	Thus, by (\ref{NumeratorOfHilbertSeriesIn Dim3}) and (\ref{MultiplicityinSiBook}) we get 
	\begin{equation}\label{MultiplicityOfJ}
		e(R/J) = \frac{s^2+3s-4}{2}.
	\end{equation}
	As $J$ is generically a complete intersection then by \cite[Theorem 6.6 (b)]{Ram2} we have 
	\begin{equation}\label{MultiplicityFor_d_Three}
		e(\mathcal{F}(J)) \cdot deg(\mathfrak{F}) = s^2-e(R/J).
	\end{equation}
	Since $J$ is linear type then $\mathcal{F}(J)$ is a polynomial ring, hence $e(\mathcal{F}(J))=1$. Then  (\ref{MultiplicityOfJ})and  (\ref{MultiplicityFor_d_Three}) give
	\begin{equation*}
		 \deg(\mathfrak{F}) = s^2-\frac{s^2+3s-4}{2}= \frac{s^2-3s+4}{2},
	\end{equation*}
	as was to be shown.
\end{proof}

We are not sure as to whether the equivalent conditions of (a) actually hold under the standing assumptions of the theorem.
In any case, the following example shows that the uniform position assumption is essential for the equivalent conditions of (a) to actually hold.

\begin{Example}\label{not_uniform_pos}\rm Let $I\subset R=k[x,y,z]$ be the ideal generated by the $3$-minors of the following matrix
	\begin{equation}\label{non-uniform}
		\left[\begin{array}{ccc}
			x^2&0&z^2\\
			y^2&x^2&0\\
			z^2&y^2&x^2\\
			0&z&y
		\end{array}\right].
	\end{equation}
	A calculation with \cite{M2} shows that $I$ is the reduced ideal of a set $X$ of  $18$ points in $\pp^2$.
	Computing its Hilbert function gives that $X$ is in generic $18$-position. Moreover, an additional calculation gives that $\dim_k R_1I_5=9$, hence $X$ is in tight $18$-position.
	However, it is not in uniform position since it has four distinct aligned points (on the line $y=0$).
	Direct inspection shows that $(y,z)\in {\rm Min}(R/J)$ but $(y,z)\notin {\rm Min}(R/I)$ -- actually, $J$ is the ideal of maximal minors of the matrix
	$$\left[\begin{array}{ccc}
		z^2 & -y^3+x^2z\\
		-y^2  & x^2y\\
		x^2 & z^3
	\end{array}\right].$$	
	
	In particular, $J^{\rm sat}=J\neq I.$
\end{Example}

\begin{Remark}\rm
	On the not so bright side, it would seem like a modification of the preceding example would have the ideal $I$ in tight uniform $n$-position and still not verifying  the equivalent conditions of item (a). Namely, replace the above $3\times 3$ upper submatrix by one with general entries of degree $2$.
\end{Remark}

  \bibliographystyle{amsalpha}

  \newpage
  
\noindent {\bf Addresses:}

\smallskip

\noindent {\sc Dayane Lira}\\
Departamento de Ciência e Tecnologia, DCT\\
Universidade Federal Rural do Semi-Árido\\
59780-000 Caraúbas, RN, Brazil\\
{\em e-mail}: dayannematematica@gmail.com\\


\noindent {\sc Geisa Oliveira}\\
Departamento de Matem\'atica, CCEN\\ 
Universidade Federal de Pernambuco\\ 
50740-560 Recife, PE, Brazil\\
{\em e-mail}: geisa.gama@ufpe.br\\


\noindent {\sc Zaqueu Ramos}\\
Departamento de Matem\'atica, CCET\\ 
Universidade Federal de Sergipe\\
49100-000 S\~ao Cristov\~ao, SE, Brazil\\
{\em e-mail}: zaqueu@mat.ufs.br\\


\noindent {\sc Aron Simis}\\
Departamento de Matem\'atica, CCEN\\ 
Universidade Federal de Pernambuco\\ 
50740-560 Recife, PE, Brazil\\
{\em e-mail}:  aron.simis@ufpe.br

\end{document}